\definecolor{airforceblue}{rgb}{0.36, 0.54, 0.66}
\numberwithin{equation}{section}
\newtheorem{theorem}{Theorem}[section]
\theoremstyle{plain}
\newtheorem{lemma}[theorem]{Lemma}
\theoremstyle{plain}
\newtheorem{proposition}[theorem]{Proposition}
\theoremstyle{plain}
\newtheorem{corollary}[theorem]{Corollary}
\newtheorem{definition}[theorem]{Definition}
\theoremstyle{definition}
\newtheorem{remark}[theorem]{Remark}
\newtheorem{example}[theorem]{Example}
\DeclareMathOperator{\Div}{div}
\DeclareMathOperator{\curl}{curl}
\newcommand{\e}{{\mathsf e}}
\newcommand{\N}{{\mathbb N}}
\newcommand{\Z}{{\mathbb Z}}
\newcommand{\R}{{\mathbb R}}
\newcommand{\eps}{\varepsilon}
\newcommand{\beq}{\begin{equation}}
\newcommand{\eeq}{\end{equation}}
\renewcommand{\le}{\leqslant}
\renewcommand{\ge}{\geqslant}
\newcommand{\cal}{\mathcal}
\def\XXint#1#2#3{{\setbox0=\hbox{$#1{#2#3}{\int}$ }
\vcenter{\hbox{$#2#3$ }}\kern-.6\wd0}}
\newcommand{\opnorm}{\@ifstar\@opnorms\@opnorm}
\newcommand{\@opnorms}[1]{%
  \left|\mkern-1.5mu\left|\mkern-1.5mu\left|
   #1
  \right|\mkern-1.5mu\right|\mkern-1.5mu\right|
}
\newcommand{\@opnorm}[2][]{%
  \mathopen{#1|\mkern-1.5mu#1|\mkern-1.5mu#1|}
  #2
  \mathclose{#1|\mkern-1.5mu#1|\mkern-1.5mu#1|}
}
\newenvironment{enumroman}{\begin{enumerate}

}{\end{enumerate}}
\title[Uniform ellipticity and regularity of the stress field]{A general notion of uniform ellipticity \\ and the regularity of the stress field \\ for elliptic equations in divergence form}
\author[U. Guarnotta]{Umberto Guarnotta}
\address[U. Guarnotta]{Dipartimento di Matematica e Informatica, Universit\`a degli Studi di Catania, Viale A. Doria 6, 95125 Catania, Italy}
\email{umberto.guarnotta@phd.unict.it}
\author[S. Mosconi]{Sunra Mosconi}
\address[S. Mosconi]{Dipartimento di Matematica e Informatica, Universit\`a degli Studi di Catania, Viale A. Doria 6, 95125 Catania, Italy}
\email{sunra.mosconi@unict.it}
\subjclass[2010]{}
\keywords{}
\begin{document}

\begin{abstract}
For solutions of $\Div(DF(Du))=f$  we show that the quasiconformality of $z\mapsto DF(z)$ is the key property  leading to the Sobolev regularity of the stress field $DF(Du)$, in relation with the summability of $f$. This class of nonlinearities encodes in a general way the notion of uniform ellipticity and encompasses all  known instances where the stress field is known to be Sobolev regular.  We provide examples showing the optimality of this assumption and present  two applications:  a nonlinear Cordes condition for equations in divergence form and some partial results on the $C^{p'}$-conjecture.

\noindent
{\bf{MSC 2020}}:  30C65, 35B65, 35J62, 49K20.

\noindent
{\bf{Key Words}}: Elliptic equations in divergence form, Regularity, Quasiconformal map, Calder\'on-Zygmund estimates, Cordes condition.

\end{abstract}

\maketitle

	\begin{center}
		\begin{minipage}{9cm}
			\small
			\tableofcontents
		\end{minipage}
	\end{center}
	
\section{Introduction}

In this work we are interested in Sobolev regularity results for the often called ``stress field'' $DF(Du)$ corresponding to solutions of  
\beq
\label{diveq}
\Div(DF(Du))=f,
\eeq
seen as the Euler-Lagrange equation for the energy functional 
\beq
\label{defJ}
J(w, \Omega)=\int_{\Omega}F(Dw)+f\, w\, dx,
\eeq
where $\Omega\subset \R^{N}$, $N \ge 2$, $f\in L^{m}(\Omega)$ for some $m>1$, and $F\in C^1(\R^N)$ is a strictly convex function obeying a suitable local form of uniform ellipticity condition. Questions regarding regularity of the stress field recently gained increasing interest as a basic tool to attack further regularity and locality properties of solutions to divergence form equation, see e.\,g.\,\cite{AKM, BDW, BCDKS, BCDS, CFR, CF, KM}. Despite its usefulness, however, most of the results are constrained to special kinds of nonlinearities of either $p$-Laplacian type or having Uhlenbeck structure. Simple nonlinearities such as, for example,
\beq
\label{ex0}
F(z)=|z-z_{0}|^{p}+|z+z_{0}|^{p}, \qquad 1<p<2, \quad z_{0}\ne 0,
\eeq
do not fall within most of the currently available regularity theory. Since we focus on the stress field instead of  $Du$ itself, we briefly justify this point of view recalling the general situation for functionals of the Calculus of Variations and their local minimizers. 

Let $u$ solve \eqref{diveq}. If $F\in C^{2}(\R^{N})$ is uniformly elliptic, i.\,e.
\[
\lambda\, |\xi|^{2}\le (D^{2}F(z)\, \xi, \xi)\le \Lambda\, |\xi|^{2},\qquad \forall z, \xi\in \R^{N},
\]
it is a classical result that $f\in L^{2}_{{\rm loc}}(\Omega)\Leftrightarrow u\in W^{2, 2}_{{\rm loc}}(\Omega)$, and in this case the regularity of $Du$ and $V=DF(Du)$ coincide, since $Du=DF^{-1}(V)$ and $DF$ is  bi-Lipschitz. Regarding $W^{2, m}$ regularity for $m\ne 2$,  it is clear that if $u\in W^{2, m}_{{\rm loc}}(\Omega)$, then $DF(Du)\in W^{1, m}_{{\rm loc}}(\Omega)$, and thus $f\in L^{m}_{{\rm loc}}(\Omega)$. Conversely, suppose $f\in L^m_{\rm loc}(\Omega)$: differentiating \eqref{diveq} gives 
\[
\Div \big(D^{2}F(Du)\, D v_{k}\big)=\partial_{k}f ,\qquad v_{k}=\partial_{k}u, \quad k=1, \dots, N.
\]
If $f\in L^{m}_{{\rm loc}}(\Omega)$ for $m>N$, then $Du$ is H\"older continuous by De Giorgi-Nash's theorem; so, freezing the (now continuous) coefficients of the matrix $D^{2}F(Du)$, we can apply the Calder\'on-Zygmund theory to obtain that $u\in W^{2, m}_{{\rm loc}}(\Omega)$, i.\,e.\,$DF(Du)\in W^{1, m}_{{\rm loc}}(\Omega)$.

Next, consider the $p$-Poisson equation
\beq
\label{deltap}
\Delta_{p}u=f,\qquad p>1,
\eeq
corresponding to the integrand $F(z)=|z|^{p}/p$. In this case $F$ satisfies
\[
 \lambda\, |z|^{p-2}\, |\xi|^{2}\le (D^{2}F(z)\, \xi, \xi)\le \Lambda\, |z|^{p-2}\, |\xi|^{2},
\]
and the Sobolev regularity of $Du$ is much more involved. We are aware of only one result giving second order Sobolev regularity of $u$ from $L^{m}$ assumption on $f$: in the non-degenerate case $p\in (1, 2]$, the regularity $u\in W^{2, p}(\R^{N})$ if $f\in L^{p'}(\R^{N})$ ($\frac{1}{p}+\frac{1}{p'}=1$, as usual) has been proved in \cite{Si} for global solutions and in \cite{DT} for local ones.

Through difference quotients and Caccioppoli inequalities one can usually infer Sobolev regularity of $Du$ from Sobolev regularity of $f$. In this framework, \cite{C, MRS} treat the case when $p>2$ is near uniform ellipticity proving, respectively, $u\in W^{2, 2}_{{\rm loc}}(\Omega)$ for $2\le p<3$ and $u\in W^{2, m}_{{\rm loc}}(\Omega)$ for any $m$, as long as $p-2$ is sufficiently small. The postulated regularity for $f$ is $f\in W^{1, 2}_{{\rm loc}}(\Omega)$ in \cite{C} and $f\in W^{1, m}_{{\rm loc}}(\Omega)$ for $m>N$ in \cite{MRS}.

For $p> 2$ and only assuming $L^{m}$ regularity of $f$ in \eqref{deltap}, the best results available prove {\em fractional} differentiability of $Du$, see \cite{M, M2, Mis, Sa, Si}. The main idea of \cite{KM, M, M2}, which goes back to Uhlenbeck \cite{U}, is to study the regularity properties of the field 
\beq
\label{defvp}
{\cal V}=|Du|^{\frac{p-2}{2}}\, Du
\eeq
and deduce from the latter suitable regularity of $Du$. This approach is nowadays widespread, but still failed to produce estimates in terms of Lebesgue norm of $f$ paralleling the second-order Calder\'on-Zygmund theory depicted above in the non-degenerate case.

An alternative route is  to consider the regularity of the stress field 
\[
V=|Du|^{p-2}\, Du,
\]
which arises as an interesting object {\em per se} in a variety of situations, e.\,g.\,in the framework of nonconvex variational problems \cite{CMu} and in the dual formulation of traffic congestion problems (see \cite{BCS}). In particular,  the applicability of the DiPerna-Lions theory in the latters is tied to the Sobolev regularity of the stress field of the dual problem, which has been the main concern of \cite{BCS} for a very degenerate functional of the form \eqref{defJ}.
When $f$ is Sobolev regular, variants of Caccioppoli inequalities and difference quotients methods have been used in the cited works to obtain Sobolev regularity of $V$ (see also  \cite{DS}).
 
 For less regular $f$ the seminal paper is  \cite{L}, treating the case $f\in L^m$ with $m\ge \max\{2, N/p\}$. In more recent years, the regularity of the stress field  has been the object of fruitful investigations,  also thanks to the fact that it seems to provide more natural estimates than \eqref{defvp}. Starting from \cite{DKS},  a first-order nonlinear Calder\'on-Zygmund theory for the $p$-Laplacian problem with right-hand side in divergence form
\[
\Div |Du|^{p-2}\, Du=\Div G
\]
is nowadays well developed, showing the principle  that the divergence operator can be ``canceled out'' to get estimates for $|Du|^{p-2}\, Du$ in terms of $G$ in the same space. We refer to \cite{BDW, BCDKS, BCDS} and the literature therein for this line of research, but let us remark that the order of differentiability for $V$ considered in these works is always less than $1$. 

Indeed, regarding the second-order Calder\'on-Zygmund theory (i.\,e.\,full Sobolev regularity for $V$), much less is known. A natural conjecture for solutions of \eqref{deltap}  via the same principle would be  
\beq
\label{conj}
f\in L^{m}_{{\rm loc}}(\Omega)\quad \Leftrightarrow\quad V\in W^{1, m}_{{\rm loc}}(\Omega).
\eeq
The case $m=+\infty$, $p>2$, corresponds to the well-known $C^{p'}$ conjecture, which will be discussed later, proved in the plane in \cite{ATU}. The endpoint $m=1$ of \eqref{conj}  is considered by \cite{AKM} where, e.\,g., for $f\in L^{1}_{{\rm loc}}(\Omega)$, it is proved that $V\in W^{1-\eps, 1}$ for all $\eps>0$ (actually, $f$ can be a general Radon measure in \cite{AKM}). 

The only positive result of the type \eqref{conj} (beyond the close one \cite{L}) concerns the Hilbertian case $m=2$, which has been recently proved in \cite{CM}  for equations having {\em Uhlenbeck structure}, i.\,e., of the form
\beq
\label{cm}
\Div (a(|Du|)\, Du)=f \quad (\text{or}\ \Div G).
\eeq
The role of this structural assumption is  in fact the main motivation of this work: indeed, (with the exception of \cite{M, AKM}), the higher order Calder\'on-Zygmund theory exposed so far is restricted to equations of the form \eqref{cm} and in this case it can be actually extended to systems (see \cite{BCDKS, BCDS, CM4} and the recently appeared \cite{BCDM}). As the $L^{2}$-theory seems to be the basic step to deal with the general problem \eqref{conj}, it is worth investigating {\em to what extent the Uhlenbeck structure is necessary to develop such a theory} and whether the general nonlinear problem \eqref{diveq} enjoys the same Sobolev regularity for its stress field $V=DF(Du)$. 

It turns out that such regularity holds true when the map $z\mapsto DF(z)$ is {\em quasiconformal}. A quasiconformal map  $G:\R^N\to \R^N$ is a homeomorphism belonging to $W^{1, N}_{\rm loc}(\R^N)$  such that, for some finite $K$,
\beq
\label{qcd}
|\lambda_{\rm max}(DG(z))|^N\le K\,  |{\rm det}\, DG(z)|
\eeq
almost everywhere, with $\lambda_{\rm max}(DG)$ being the maximum singular value of $DG$; we refer to  \cite{Ma} for a nice survey on quasiconformal mappings. The main outcome of our results is that quasiconformality of $DF$ in \eqref{diveq} is a natural and robust notion of uniform ellipticity, flexible enough to encompass anisotropic examples such as  \eqref{ex0}, still allowing a reasonable regularity theory.

Quasiconformal maps which are gradients of convex functions (or being, more generally, monotone) have been systematically studied by Kovalev and Maldonado in \cite{KoMa, K}; convex potentials of quasiconformal mappings are called {\em quasiuniformly convex} functions.

\begin{definition}\label{defKell}
A convex $F\in C^1(\R^N)$ is called {\em $K$-quasiuniformly convex}  for some $K\in [1, +\infty)$ if it is not affine, $DF\in W^{1, 1}_{\rm loc}(\R^N)$, and 
\beq
\label{Kell}
\lambda_{{\rm max}}(D^2F(z))\le K\, \lambda_{{\rm min}}(D^2F(z))\qquad \text{for a.\,e.\,$z\in \R^N$},
\eeq
where $\lambda_{\rm min}(M)$ and $\lambda_{\rm max}(M)$ denote the minimum and maximum eigenvalues of $M$.
\end{definition} 

For the most part, this will be the main assumption on $F$ for the study of \eqref{diveq}, and we will use the acronym `q.\,u.\,c.'\, for `quasiuniformly convex'. Clearly, \eqref{Kell} and \eqref{qcd} for $G=DF$ are equivalent (but \eqref{Kell} implies \eqref{qcd} with a constant $K^{N-1}$); in Proposition \ref{proqu} we will show that any q.\,u.\,c.\,function is strictly convex and of $(p, q)$-growth. In section \ref{examples} we will discuss concrete examples but, in the meantime, we note that $ z\mapsto |z|^p$ is q.\,u.\,c.\,for any $p>1$, and sum of q.\,u.\,c.\,functions is q.\,u.\,c., hence \eqref{ex0} is q.\,u.\,c..

\subsection{Main results}
We now present our main results, referring to the appropriate theorems in the following sections for more complete statements. The first one concerns local minimizers $u\in W^{1,p}_{\rm loc}(\Omega)$ for the functional $J$ in \eqref{defJ}, i.\,e.\,those $u$ obeying $J(u)<+\infty$ and $J(u,B)\le J(u+w,B)$ for all $B \Subset \Omega$ and $w\in W^{1, p}_0(B)$.

\begin{theorem}[Theorem \ref{minloc}]\label{minlocint}
Let $\Omega\subseteq \R^{N}$ be open, $F\in C^{1}(\R^{N})$ be  $K$-q.\,u.\,c.\,, and $f\in L^{2}(\Omega)\cap W^{-1, p'}(\Omega)$ for $p=1+1/K$. Then any local minimizer $u\in W^{1, p}_{\rm loc}(\Omega)$ of $J$  fulfills
\[
\|DF(Du)\|_{W^{1,2}(B_{R})}\le C\Big(1+\|f\|_{L^{2}(B_{2R})}+\|F(Du)\|_{L^{1}(B_{2R})}^{K}\Big)
\]
for some $C=C(K, N, R)$ and all $B_{4R}\subseteq \Omega$.
\end{theorem}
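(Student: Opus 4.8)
\noindent The plan is to pass to the Euler--Lagrange equation $\Div(DF(Du))=f$, reduce by approximation to an \emph{a priori} estimate for smooth solutions of a non-degenerate problem, and then prove that estimate via a Caccioppoli-type inequality for the stress field $V:=DF(Du)$ in which the quasiconformality of $DF$ enters decisively. For the reduction, fix $B_{4R}\Subset\Omega$ and set $F_\eps(z):=(F*\rho_\eps)(z)+\eps|z|^2$; this is smooth, uniformly elliptic on bounded sets and \emph{still $K$-q.\,u.\,c.}, since mollification preserves \eqref{Kell} and $\lambda_{\max}(M+2\eps\,\mathrm{Id})/\lambda_{\min}(M+2\eps\,\mathrm{Id})\le\lambda_{\max}(M)/\lambda_{\min}(M)$ for every $M\ge0$. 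By strict convexity (Proposition~\ref{proqu}), $u$ is the unique minimizer of $J$ on $B_{4R}$ with its own boundary datum; the minimizers $u_\eps$ of the regularized functionals (with mollified $f$) are smooth by classical theory, and the absence of a Lavrentiev gap for q.\,u.\,c.\ functionals — a consequence of the $(p,q)$-growth in Proposition~\ref{proqu} — makes it legitimate to prove the estimate for $u_\eps$ with $\eps$-independent constants and let $\eps\to0$. Hence one may assume $u$ smooth, $F$ smooth, strictly convex and $K$-q.\,u.\,c.

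The heart of the proof is the following. Let $\eta\in C_c^\infty(B_{2R})$, $\eta\equiv1$ on $B_R$, $|D\eta|\le C/R$, and put $W:=\eta V\in C_c^1(\R^N;\R^N)$. Integration by parts yields the identity $\int|DW|^2=\int(\Div W)^2+2\int|\mathrm{skew}(DW)|^2$, where $\mathrm{skew}(A):=\tfrac12(A-A^\top)$. Expanding $DW=\eta\,DV+V\otimes D\eta$, $\Div W=\eta f+V\cdot D\eta$ and $\mathrm{skew}(DW)=\eta\,\mathrm{skew}(DV)+\mathrm{skew}(V\otimes D\eta)$, and handling the $D\eta$-terms by Young's inequality, one obtains for each $\delta>0$
\[
\int\eta^2\big(|DV|^2-2|\mathrm{skew}(DV)|^2\big)\,dx\le C\int_{B_{2R}}f^2\,dx+\frac{C(\delta)}{R^2}\int_{B_{2R}}|V|^2\,dx+\delta\int\eta^2|DV|^2\,dx.
\]
Now use that $DV=D^2F(Du)\,D^2u=:MH$ with $M$ symmetric positive definite satisfying $\lambda_{\max}(M)\le K\lambda_{\min}(M)$ by \eqref{Kell}, and $H:=D^2u$ symmetric. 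In an orthonormal eigenbasis of $M$ one has $(MH)_{ij}=\lambda_iH_{ij}$ with $\lambda_i>0$ and $H_{ij}=H_{ji}$, so $|DV|^2=\sum_{i,j}\lambda_i^2H_{ij}^2$, $|\mathrm{skew}(DV)|^2=\tfrac14\sum_{i,j}(\lambda_i-\lambda_j)^2H_{ij}^2$, and symmetrising in $(i,j)$,
\[
|DV|^2-2|\mathrm{skew}(DV)|^2=\sum_{i,j}\lambda_i\lambda_jH_{ij}^2\ \ge\ \frac1K\sum_{i,j}\lambda_i^2H_{ij}^2=\frac1K|DV|^2,
\]
the inequality being exactly $\lambda_j\ge\min_k\lambda_k\ge K^{-1}\max_k\lambda_k\ge K^{-1}\lambda_i$. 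Taking $\delta=1/(2K)$ and absorbing gives the second-order Caccioppoli inequality
\[
\int_{B_R}|DV|^2\,dx\le C(K)\int_{B_{2R}}f^2\,dx+\frac{C(K,N)}{R^2}\int_{B_{2R}}|V|^2\,dx.
\]

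To control $\int_{B_{2R}}|V|^2$, the lower growth bound $F(z)\ge c|z|^p-C$ gives $\|Du\|_{L^p(B_{2R})}^p\le C(1+\|F(Du)\|_{L^1(B_{2R})})$, while the upper $q$-growth with $q=K+1$ (so $q=Kp$ and $q-1=K$) gives $|V|\lesssim|Du|^{K}+1$; feeding $f\in L^2$ into the standard first-order Calder\'on--Zygmund/Gehring self-improvement for $\Div V=f$ upgrades $Du$ to $L^{2K}_{\rm loc}$, and carrying the constants through this passage from the $L^p$- to the $L^q$-scale (whose scaling ratio is $q/p=K$) produces $\|V\|_{L^2(B_{2R})}\le C(1+\|f\|_{L^2(B_{2R})}+\|F(Du)\|_{L^1(B_{2R})}^{K})$. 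Substituting this into the previous display, adding the trivial bound for $\|V\|_{L^2}$ itself, and letting $\eps\to0$ completes the proof.

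The step I expect to be the main obstacle is, conceptually, the pointwise inequality $|DV|^2-2|\mathrm{skew}(DV)|^2\ge K^{-1}|DV|^2$: it is a finite-distortion property of $DF$, degenerating as $K\to\infty$, and is precisely what forces quasiconformality of $DF$ rather than mere monotonicity. On the technical side, the heaviest points are making the regularization rigorous under $(p,q)$-growth — in particular excluding the Lavrentiev phenomenon and the possible failure of $u$ to be an admissible competitor for $F_\eps$ — and establishing the a priori $L^2$-integrability of $V$, since $u$ is only assumed to lie in $W^{1,p}_{\rm loc}$ with $p=1+1/K<2$ in general.
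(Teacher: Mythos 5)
Your core pointwise inequality is correct and is in substance the same as the paper's Lemma~\ref{basiclemma}: writing $X=PS$ in an eigenbasis of $P$, you get $|X|_2^2-2|\mathrm{skew}(X)|_2^2=\sum_{i,j}\lambda_i\lambda_jS_{ij}^2\ge K^{-1}|X|_2^2$, while the paper establishes the equivalent $|X-X^t|_2^2\le 2\varphi(\lambda_{\min}/\lambda_{\max})|X|_2^2$ with $\varphi(t)=(1-t)^2/(1+t^2)$ and reabsorbs; both give a constant depending only on $K$. Your div--curl identity with cut-off is Lemma~\ref{divrotl2}. That part of the scheme is fine.

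There are, however, two genuine gaps, and you flag both yourself without resolving them.

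\emph{The $L^2$ control of $V$.} Your Caccioppoli inequality has $\|V\|_{L^2(B_{2R})}$ on the right, but $V=DF(Du)$ is only a priori in $L^1_{\rm loc}$ (since $Du\in L^p_{\rm loc}$ and $|DF(z)|\lesssim|z|^{q-1}$ with $(q-1)/p=K^2/(K+1)>1$). You attempt to close this by ``feeding $f\in L^2$ into the standard Gehring self-improvement'' to upgrade $Du$ to $L^{2K}_{\rm loc}$. This is not a Gehring improvement: Gehring yields only an $\varepsilon$-gain, not the full jump from $L^p$ to $L^{2K}$, and no argument is given for it. The paper avoids the issue entirely: the interpolation Lemma~\ref{CM} on annuli together with the standard iteration lemma upgrades the Caccioppoli to one with $\|V\|_{L^\theta(B_{2R})}$ on the right for \emph{any} $\theta\in(0,2]$ (this is estimate \eqref{w12est}). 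Choosing $\bar\theta=\min\{p/(q-1),1\}$, the elementary growth bound $|DF(z)|^{\bar\theta}\le C(F(z)+1)^{\bar\theta(q-1)/p}$ then gives $\|V\|_{L^{\bar\theta}}\le C(\|F(Du)\|_{L^1}+1)^{(q-1)/p}$, which is exactly the quantity appearing in the statement. This replacement of the $L^2$ norm by a tunable sub-$L^1$ ``norm'' is the key device your proposal lacks.

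\emph{The regularization.} Your assertion that ``the absence of a Lavrentiev gap for q.u.c.\ functionals [is] a consequence of the $(p,q)$-growth'' is false as stated: Lavrentiev phenomena do occur under $(p,q)$-growth. The paper invokes \cite{BMT} only for the \emph{regularized} functionals $J_n$, which are strongly elliptic with standard quadratic growth, so there is no gap for each fixed $n$. You also collapse the mollification parameter and the elliptic perturbation into a single $\eps$; the paper needs them decoupled ($\eps_n$ and $\mu_n$), chosen in a specific order, precisely because the a priori Lipschitz bound $L_n=A_n/\mu_n$ from \cite{BB} has to control how small $\eps_n$ must be relative to the range of $Dv_n$, and one needs $\mu_n^{p-1}A_n^{2-p}\to0$ to kill the artificial quadratic term at the subquadratic scale $p<2$. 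Finally, ``letting $\eps\to0$ completes the proof'' hides the identification $DF_n^{-1}(V_n)\to DF^{-1}(V)$, for which the paper uses the continuity of inversion on homeomorphism groups (Arens' theorem) together with weak $W^{1,2}$ compactness and pointwise a.e.\ convergence; some argument of this type is unavoidable and is missing from your proposal.
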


\begin{remark}\label{rem0}
Let us briefly discuss the main assumptions. Other comments may be found in Remark \ref{rempq}.
\begin{itemize}
\item
{\em Assumptions on $F$}. 
 The Sobolev regularity assumption $DF\in W^{1, 1}_{\rm loc}(\R^N)$ is necessary, as shown in example \ref{excantor}, where a strictly convex, radial $F\in C^1(\R^N)$ obeying \eqref{Kell} is constructed in such a way that the stress field of a suitable solution to \eqref{diveq} with $f=0$ is not absolutely continuous. 

The q.\,u.\,convexity condition fails in simple examples such as the orthotropic $p$-Laplacian related to the integrand
\[
F(z)=\sum_{i=1}^{N}|z_{i}|^{p}.
\]
We remark that  Giaquinta's example \cite{Gi} in $\R^6$ is a local minimizer of an analytic functional whose integrand is not q.\,u.\,c.\,and such that the stress field is in $W^{1, s}_{\rm loc}(\R^N)$ only for $s<5/4$. Playing around with examples of similar structure suggests intricate interplays between the maximal Sobolev regularity of the stress field and the possibly non-standard structure of $DF$; hence, it is not clear what to expect from functionals with non-quasiconformal gradient mapping.

In example \ref{exuhl} we discuss integrands of the form $F(z)=F(|z|)$, while examples \ref{Cir}, \ref{exani} investigate more general anisotropic functionals.

\item
{\em Assumptions on $f$}.
The hypothesis $f\in W^{-1, p'}(\Omega)$ has been made for  expository reasons and the relation between $p$ and $K$ will be derived in Proposition  \ref{proqu}. On one hand, our results are mostly local in nature and therefore it suffices to require $f\in L^2_{\rm loc}(\Omega)\cap W^{-1, p'}_{\rm loc}(\Omega)$, meaning with the latter  the intersection of the spaces $W^{-1, p'}(\Omega_n)$ on an exhausting sequence of open relatively compact $\Omega_n\uparrow\Omega$. For $p\ge 2\, N/(N+2)$, the condition  $f\in W^{-1, p'}_{\rm loc}(\Omega)$ automatically follows from Sobolev Embedding and $f\in L^2_{\rm loc}(\Omega)$; for $p<2\, N/(N+2)$, $L^{2}(\Omega)$ is not embedded in $W^{-1, p'}(\Omega)$, destroying the variational framework we chose to be in. One should then resort to the notion of {\em approximable solutions} (briefly described in the last section), in order to deal with those cases. We refer to  \cite{ACC} for a comprehensive theory of approximable solutions  in the anisotropic framework. Anyway, in terms of summability, $f\in W^{-1, p'}(\Omega)$ is implied by $f\in L^{(p^*)'}(\Omega)$, which is a weaker summability than the one in \cite{L}.

\end{itemize}
\end{remark}

We will give two applications of Theorem \ref{minlocint}. 
The first one deals with nonlinear Cordes conditions for variational problems. Cordes conditions usually refer to the $L^{m}$-theory for elliptic equations in non-divergence form with measurable coefficients, namely, to solutions of
\beq
\label{linsec}
\sum_{ij=1}^{N}a_{ij}D_{ij}u=f
\eeq
with measurable coefficients $a_{ij}:\Omega\subseteq \R^N\to \R$ fulfilling
\[
\lambda\, |\xi|^{2}\le \sum_{ij=1}^{N}a_{ij}(x)\, \xi_{i}\, \xi_{i}\le \Lambda\, |\xi|^{2}
\]
for all $\xi\in \R^{N}$ and a.\,e.\,$x\in \Omega$. Under these measurability assumptions alone, there is no hope in general for the Calder\'on-Zygmund inequality
\beq
\label{czint}
\|u\|_{W^{2, m}(\Omega)}\le C_{m}\,  \|f\|_{ L^{m}(\Omega)},\qquad \forall m>1.
\eeq
Some  regularity has to be assumed on $a_{ij}$ in order to obtain \eqref{czint} for all $m>1$ (VMO regularity suffices; see \cite{CFL}). Roughly stated, a Cordes condition for \eqref{linsec} with discontinuous $a_{ij}$ says that \eqref{czint} holds if either $m$ is sufficiently near $2$, or $\Lambda/\lambda$ is sufficiently near $1$. A similar situation takes place for nonlinear  equations in divergence form.

\begin{theorem}[Theorem \ref{cord}]\label{cordesint}
Let $F$ obey the assumptions of Theorem \ref{minlocint} and $f\in L^{m}(\Omega)\cap W^{-1, p'}(\Omega)$ for some $m>1$. Then any  local minimizer $u\in W^{1, p}(\Omega)$ for $J$ in \eqref{defJ} is such that $DF(Du)\in W^{1, m}_{\rm loc}(\Omega)$ and
\[
\|DF(Du)\|_{W^{1, m}(B_{R})}\le C\, \Big(\|f\|_{L^{m}(B_{2R})}+ \|DF(Du)\|_{L^{m}(B_{2R})}\Big),
\]
for $C=C(K, N, m, R)$ and all $B_{4R}\subseteq \Omega$, in either of the following cases:
\begin{enumroman}
\item
$K\le K_{0}$, with $K_{0}=K_{0}(N, m)>1$.
\item
$|m-2|\le \delta_{0}$, for $\delta_{0}=\delta_{0}(K, N)>0$.
\end{enumroman}
\end{theorem}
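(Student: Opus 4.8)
\emph{Proof proposal.} The $m=2$ case is already contained in Theorem~\ref{minlocint}, which holds for \emph{every} $K$, and the whole point is to upgrade it to nearby exponents either by making the anisotropy $K$ small (case (i)) or by keeping $m$ close to $2$ (case (ii)). The plan is: (a) reduce to an a priori estimate for smooth, uniformly elliptic approximations; (b) rewrite the equation at the level of the stress field as a div--curl system whose deviation from ``$V=$ gradient'' is measured by $K-1$; (c) combine the Calder\'on-Zygmund $L^m$-theory with the $L^2$-estimate of Theorem~\ref{minlocint} in a fixed-point/interpolation argument that closes precisely in the two stated regimes; (d) localize.

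\emph{Reduction.} Set $F_\eps:=(F*\rho_\eps)+\eps|z|^2$. Both convolution and the addition of $\eps\, I$ preserve the pointwise eigenvalue-ratio bound \eqref{Kell} — because a convex combination of symmetric positive matrices with eigenvalue ratio $\le K$ again has eigenvalue ratio $\le K$, and because $M+\eps I$ has ratio $\le (K\lambda_{\rm min}+\eps)/(\lambda_{\rm min}+\eps)\le K$ — so each $F_\eps$ is $K$-q.\,u.\,c.\,(with the \emph{same} $K$), smooth, and uniformly elliptic on compacts. The corresponding local minimizers $u_\eps$ are then $W^{2,2}_{\rm loc}$, and by Proposition~\ref{proqu} and a comparison argument $DF_\eps(Du_\eps)\rightharpoonup DF(Du)$ in $W^{1,2}_{\rm loc}$; hence it suffices to prove the estimate for smooth, uniformly elliptic $F$ with constants depending only on $(K,N,m,R)$, and pass to the limit by weak lower semicontinuity of the norm.

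\emph{The div--curl reformulation and the fixed point.} Distributing the divergence in \eqref{diveq} gives, a.\,e., $\Div V=f$ and $DV=D^2F(Du)\,D^2u$ with $V:=DF(Du)$, while, since $DF$ is a homeomorphism (Proposition~\ref{proqu}) and $Du=DF^*(V)$ is a gradient, the matrix $S(x)\,DV(x)$ is symmetric, where $S(x):=D^2F^*(V(x))$ is symmetric positive definite with eigenvalue ratio $\le K$ (being the inverse of $D^2F(Du(x))$). Splitting $DV$ into symmetric and antisymmetric parts, the symmetry of $S\,DV$ forces $S\,\omega+\omega\,S=[\,{\rm sym}\,DV,\,S\,]$ for the antisymmetric part $\omega$, whence the pointwise bound $|\omega|\le\theta(K)\,|DV|$ with $\theta(K)\to 0$ as $K\to 1$. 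Reconstructing $V$ on $\R^N$ from $\Div V=f$ and $\curl V\sim\omega$ (after a cutoff), the second-order Calder\'on-Zygmund estimates yield a fixed-point inequality of the form
\[
\|DV\|_{L^m(B_R)}\le c_m(N)\,\theta(K)\,\|DV\|_{L^m(B_{2R})}+C\big(\|f\|_{L^m(B_{2R})}+\|V\|_{L^m(B_{2R})}\big),
\]
with $m\mapsto c_m(N)$ continuous. For $m$ fixed this closes by absorption once $\theta(K)$ is small, i.e.\,for $K\le K_0(N,m)$, giving case (i). For case (ii) the crude operator-norm bound is insufficient — it costs a dimensional factor and would force a smallness of $K$ even at $m=2$ — so one instead perturbs off the $L^2$-estimate of Theorem~\ref{minlocint} \emph{itself} (re-running its proof with weights $|DV|^{m-2}$, or interpolating it against a marginally higher integrability provided by a reverse H\"older inequality and treating $m<2$ by duality), using that the resulting constant depends continuously on $m$ and reduces at $m=2$ to the ($K$-unrestricted) constant of Theorem~\ref{minlocint}; this produces a threshold $\delta_0(K,N)>0$.

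\emph{Localization and conclusion.} A cutoff $\eta\in C^\infty_c(B_{2R})$, $\eta\equiv1$ on $B_R$, produces commutator and lower-order terms absorbed into $\|V\|_{L^m(B_{2R})}$ (using $V\in W^{1,2}_{\rm loc}\subset L^{2^*}_{\rm loc}$ and interpolation), yielding the displayed inequality. I expect the genuine difficulty to be exactly step (c) in the regime $|m-2|\le\delta_0$: one cannot route through the Calder\'on-Zygmund inequality for the non-divergence form ${\rm tr}(D^2F(Du)\,D^2u)=f$, since second-order $L^2$ bounds for $u$ do require a Cordes condition on $K$ in dimension $\ge 3$, whereas the stress-field estimate of Theorem~\ref{minlocint} does not — so the perturbation must be built on top of that estimate, and arranging the interpolation/weighted argument so that the threshold depends only on $(K,N)$ is where the real work lies. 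The remaining points — that mollification preserves $K$-q.\,u.\,convexity, and that the cutoff does not spoil the div--curl structure — are routine.
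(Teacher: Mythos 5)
Your reduction and your treatment of case~(i) are essentially the paper's argument: you mollify, observe (correctly) that both convolution and the shift $M\mapsto M+\eps I$ preserve the eigenvalue-ratio bound \eqref{Kell}, obtain the pointwise bound $|\curl V|_2\le\sqrt{2}\,\e(K)\,|DV|_2$ from the factorization $DV=D^2F(Du)\,D^2u$ (you derive it via a Sylvester-type anticommutator identity rather than the diagonalization of Lemma~\ref{basiclemma}, but the content is the same), write $DV$ in terms of $\Div V$ and $\curl V$ through second-order Riesz transforms, and absorb. The constant you call $c_m(N)$ is the paper's $N^2(\hat m-1)$ from \eqref{lp}; requiring $\sqrt{2}\,c_m(N)\,\e(K)<1$ gives $K_0(N,m)$. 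That part is fine.

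The genuine gap is case~(ii). You correctly diagnose that the crude bound $c_m(N)$ on the Riesz multiplier is useless near $m=2$ because it does not degenerate to $1$, and you correctly sense that one must lean on the $L^2$ estimate — but the substitutes you sketch (re-running the proof with weights $|DV|^{m-2}$, a reverse H\"older self-improvement, or duality for $m<2$) are not an argument and would not obviously produce a threshold depending only on $(K,N)$. What the paper actually does is both simpler and sharper: it treats the div--curl resolvent $T:(f,G)\mapsto DV$ (with $\Div V=f$, $\curl V=\sqrt2\,G$) as a fixed Fourier-multiplier operator between the spaces $X_m=L^m\times L^m(\R^N;\R^N\wedge\R^N)$ and $Y_m=L^m(\R^N;\R^N\otimes\R^N)$ equipped with the natural mixed/Frobenius norms; Lemma~\ref{divrotl2} is the statement that $\|T\|_{\mathcal L(X_2,Y_2)}=1$ \emph{exactly} (the identity $\|DV\|_2^2=\|\Div V\|_2^2+\tfrac12\|\curl V\|_2^2$ is an isometry for this orthogonal decomposition), while Lemma \ref{lp} gives boundedness at two endpoints $\bar m'<2<\bar m$. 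Riesz--Thorin interpolation then yields $\|T\|_{\mathcal L(X_m,Y_m)}\le 1+\eta(m)$ with $\eta(m)\to 0$ as $m\to 2$, with \emph{no} dimensional loss near $m=2$. Since $\e(K)<1$ strictly, one picks $\delta_0(K,N)$ so that $(1+\eta(m))\,\e(K)<1$ for $|m-2|\le\delta_0$, and the curl term is reabsorbed exactly as in case~(i). No weighted Caccioppoli, no reverse H\"older, and no separate treatment of $m<2$ by duality are needed; the whole of case~(ii) lives on the linear side, in the choice of norms that make $T$ norm-one at $m=2$. This is the idea your sketch is missing, and without it your argument for (ii) does not close.

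A minor point: the paper's localization also proceeds via $W=\varphi V$, but the error terms $\|D\varphi\otimes V\|_m$, $\|(V,D\varphi)\|_m$, $\|V\wedge D\varphi\|_m$ are simply placed on the right and then handled by the hole-filling/iteration step together with Lemma~\ref{CM}; your appeal to $V\in L^{2^*}_{\rm loc}$ and interpolation is not needed and would anyway not cover $m>2^*$ without further work.
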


As a second application we will derive some partial results pertaining the $C^{p'}$ conjecture, which states that any solution of the $p$-Poisson equation with bounded right-hand side is $C^{p'}$ regular if $p>2$ (hereafter we use the notation $C^{\gamma}=C^{[\gamma], \gamma-[\gamma]}$, being $[\gamma]$ the integer part of $\gamma$).

\begin{theorem}[Corollary \ref{corcor}, Theorem \ref{CZA}, and Corollary \ref{radialcp'}]\label{cpprimoth}
Let $u:\Omega\to \R$ be an approximable solution of \eqref{deltap}.
\begin{enumroman}
\item 
If $f\in L^{\infty}(\Omega)$, then for all sufficiently small $|p-2|$ it holds $u\in C^{2-\alpha_p}(\Omega)$, where $\alpha_p=c(N)\, |p-2|>0$.
\item
For any  $m, p>1$, if $u$ and $\Omega$ have cylindrical symmetry, then $|Du|^{p-2}\, Du\in W^{1, m}_{{\rm loc}}(\Omega)$ whenever $f\in L^{m}_{{\rm loc}}(\Omega)$. In particular, any cylindrical approximable solution of \eqref{deltap} belongs to $ C^{\min\{ p', 2\}-\eps}(\Omega)$ for any $\eps>0$.
\end{enumroman}
\end{theorem}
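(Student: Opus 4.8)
The strategy is to reduce each assertion to a consequence of Theorems \ref{minlocint} and \ref{cordesint}, applied to the $p$-Laplacian integrand $F(z)=|z|^p/p$, which (as observed in the introduction) is $K$-q.\,u.\,c.\ with $K=\max\{p-1,1/(p-1)\}$, so that the exponent $1+1/K$ appearing in those theorems equals $\min\{p,p'\}$. For part (i), I would take $f\in L^\infty(\Omega)$; locally this gives $f\in L^m_{\rm loc}(\Omega)$ for every $m<\infty$, so the Cordes-type alternative (i) of Theorem \ref{cordesint} applies provided $K\le K_0(N,m)$, which holds for $|p-2|$ small (with the smallness depending on the $m$ we fix). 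Hence $V=|Du|^{p-2}Du\in W^{1,m}_{\rm loc}(\Omega)$ for that $m$; choosing $m>N$ we get $V\in C^{0,1-N/m}_{\rm loc}$ by Morrey embedding, and then invert the (locally bi-H\"older, since $p$ near $2$) map $V\mapsto |V|^{p'-2}V=Du$ to transfer H\"older regularity to $Du$. Tracking the exponents through the Morrey embedding and the H\"older exponent of $z\mapsto |z|^{p'-2}z$ (which is $\min\{1,p'-1\}$ near the relevant scale, but one must be careful about the behaviour near $0$) should produce $u\in C^{2-\alpha_p}$ with $\alpha_p=c(N)|p-2|$; pinning down the linear dependence of $\alpha_p$ on $|p-2|$ and making the constant $c(N)$ explicit is the one genuinely delicate bookkeeping point.

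For part (ii), the key reduction is that under cylindrical symmetry the equation \eqref{deltap} collapses, after separating the symmetric variables, to an ODE (or a lower-dimensional radial PDE) in which the ellipticity ratio is controlled independently of $p$ — intuitively, the symmetry removes the ``angular'' degeneration that makes the $p$-Laplacian genuinely non-uniformly elliptic, so the remaining operator behaves like a one-dimensional weighted problem. Concretely I would write $u(x)=u(r,s)$ with $r$ the radial cylindrical coordinate and $s$ the axial one, express \eqref{deltap} in these coordinates, and observe that the resulting operator on $(r,s)$ has a Hessian whose eigenvalue ratio is bounded by a dimensional constant (not involving $p$); this lets one apply either Theorem \ref{minlocint}/\ref{cordesint} in the reduced setting or argue directly that $V$ inherits $W^{1,m}_{\rm loc}$ from $f\in L^m_{\rm loc}$ by the non-degenerate Calder\'on–Zygmund argument recalled in the introduction. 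Once $V\in W^{1,m}_{\rm loc}(\Omega)$ for all $m<\infty$ when $f\in L^m_{\rm loc}$, the passage to $u\in C^{\min\{p',2\}-\eps}$ is the same inversion-of-$V$ step as above: for $p\ge 2$ we have $\min\{p',2\}=p'$ and $Du=|V|^{p'-2}V$ is H\"older of exponent $p'-1$ composed with the H\"older gain from Morrey, giving $u\in C^{p'-\eps}$; for $p<2$ one gets $u\in C^{2-\eps}$ directly since $V\mapsto Du$ is then Lipschitz and $V\in W^{1,m}_{\rm loc}\hookrightarrow C^{1-\eps}$.

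The main obstacle I anticipate is the cylindrical reduction in part (ii): one must verify that after the change of variables the reduced operator is genuinely uniformly elliptic with constant independent of $p$ (degeneracy could persist on the symmetry axis $r=0$, where the cylindrical coordinates are singular), and that the reduced right-hand side still lies in $L^m_{\rm loc}$ with the correct weight. Handling the axis carefully — possibly via an approximation argument using the "approximable solution" framework mentioned in the statement, so that one works with non-degenerate regularized problems $\Div((\mu^2+|Du_\mu|^2)^{(p-2)/2}Du_\mu)=f_\mu$ and passes to the limit in the uniform estimates — is where the real work lies; the rest is a combination of the already-established Theorems \ref{minlocint}–\ref{cordesint} with standard Sobolev/Morrey embeddings and the elementary H\"older continuity of the power maps $z\mapsto|z|^{t-2}z$.
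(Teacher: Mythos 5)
Your part (i) is essentially the paper's argument (Corollary \ref{corcor}): apply the Cordes alternative of Theorem~\ref{cord} to $F(z)=|z|^p/p$, choose $m_p>N$ compatibly with the constraint $K_p\le K_0(N,m_p)$, use Morrey embedding on $V=|Du|^{p-2}Du$, and then transfer H\"older regularity to $Du$ via the inverse power map, using $L^\infty$-boundedness of $f$ (hence of $Du$) to get local bi-H\"older control near the degenerate set. You correctly flag the exponent bookkeeping as the only delicate point; the paper makes the choice $m_p=1/(2N^2|p-2|)$ explicit and arrives at $\alpha_p$ linear in $|p-2|$.

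For part (ii), however, your central claim is wrong, and the argument built on it does not go through. You assert that after passing to cylindrical coordinates the reduced operator has ``a Hessian whose eigenvalue ratio is bounded by a dimensional constant (not involving $p$),'' so that the problem becomes uniformly elliptic. That is false: the eigenvalue ratio of $D^2F(z)$ for $F(z)=|z|^p/p$ is $p-1$ (or $1/(p-1)$) at every $z\ne0$, and restricting attention to cylindrical functions does not change the pointwise structure of $D^2F$ along the range of $Du$. The $p$-degeneracy does not disappear from the reduced equation, and Theorems~\ref{minlocint}/\ref{cordesint} applied ``in the reduced setting'' would still only give the $p$-dependent Cordes range, not arbitrary $m>1$ for arbitrary $p>1$.

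The observation that actually makes part (ii) work, and which is missing from your proposal, is that for a cylindrical solution the stress field is \emph{irrotational}: writing $V(x)=h(|x|)\,x$ (after the reduction to radial symmetry), the matrix $DV$ is symmetric, so $\curl V=0$. By the Poincar\'e lemma one can write $V=Dv$ locally, and then $\Div V=f$ becomes $\Delta v=f$. The Sobolev regularity of $V$ is then a consequence of the \emph{classical} Calder\'on--Zygmund theory for the Laplacian, completely bypassing the q.\,u.\,c.\ machinery and the $p$-dependent ellipticity ratio; this is exactly why no restriction on $p$ or $m$ is needed in part (ii). Relatedly, the paper handles the ``approximable solution'' regularization by convolving $V$ itself with a radial mollifier (which preserves the radial structure $V_\eps(x)=h_\eps(|x|)x$ and hence irrotationality), not by regularizing the coefficients of the $p$-Laplacian as you propose; your regularization $( \mu^2+|Du_\mu|^2)^{(p-2)/2}$ would destroy the key algebraic fact that $DV$ is symmetric unless you separately re-establish it for the regularized field. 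Once the irrotationality observation is in place, your final step (inverting $z\mapsto|z|^{p-2}z$ and using Morrey) is correct and matches Corollary~\ref{radialcp'}.
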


\begin{remark}\ 

\begin{itemize}
\item
Item (i) confirms the validity of the $C^{p'}$ conjecture near uniform ellipticity, and is inspired by \cite{MRS}. Here, however, we obtain an explicit rate of the H\"older exponent and, more substantially, we do not require Sobolev regularity on $f$. When $f\in L^{\infty}(\Omega)$ the usual notion of weak solution suffices.
\item
Point (ii) gives a weak form of the $C^{p'}$ conjecture (namely, $u\in C^{p'-}$) in the class of cylindrical solutions for any $p\ge 2$, with a different approach than the one of \cite{ATU2}. We need the notion of approximable solutions as in general $f$ may fail to belong to $W^{-1, p'}(\Omega)$ for small $m>1$, destroying the variational setting. For details on such a notion we refer to Section \ref{sola}.

By a cylindrical solution we mean a function of the form $u(x)=v(|x'|)$, $x'\in \R^{k}$, $k\le N$. It is worth noticing that the domain $\Omega$ may not contain the origin, in which case the approach of \cite{ATU2} cannot be easily applied.  
\end{itemize}

\end{remark}

\subsection{Outline of the proofs}

Consider, as a first step, a smooth compactly supported solution $u$ of 
\[
\Div DF(Du)=f \qquad \text{in $\R^{N}$}
\]
with $F$ and  $f$ smooth.  Our starting point is the well-known identity
\beq
\label{divcurl6}
\|DV\|_{L^{2}(\R^{N})}^{2}=\|\Div V\|_{L^{2}(\R^{N})}^{2}+\frac{1}{2}\|\curl V\|_{L^{2}(\R^{N})}^{2} \qquad \forall\,V\in C^{\infty}_{c}(\R^{N}; \R^{N}),
\eeq
where $\curl V= DV-DV^{t}$.
Applying \eqref{divcurl6} to the stress field $V=DF(Du)$, we are reduced to estimate $\curl V$. 

The main observation is that, in the smooth setting, $DV$ is of special type, namely
\[
DV=D^{2}F(Du)\, D^{2}u,
\]
where $D^{2}F(Du)$ is a symmetric positive definite matrix and $D^{2}u$ is symmetric. An elementary lemma shows that any matrix of the form
\[
X=P\, S, \qquad \text{$P$ symmetric positive definite, $S$ symmetric,} 
\]
 fulfills
\beq
\label{gg}
|X-X^{t}|_{2}^{2}\le 2\, \Big(1-\frac{\lambda_{{\rm min}}}{\lambda_{{\rm max}}}\Big)^{2}\, |X|_{2}^{2} \, ,
\eeq
where $\lambda_{{\rm min}}$ and $\lambda_{{\rm max}}$ are, respectively, the minimum and maximum eigenvalues of $P$. Thus the curl term in \eqref{divcurl6} can be reabsorbed to the left if $\lambda_{{\rm max}}\le K\, \lambda_{{\rm min}}$ holds a.\,e.\,for the matrix $D^{2}F$, giving
\[
\|DV\|_{L^{2}(\R^{N})}^{2}\le K^{2}\, \|f\|_{L^{2}(\R^{N})}^{2}
\]
for $V=DF(Du)$ in the smooth, global setting.

\medskip

In order to prove Theorem \ref{minlocint} we have to localize the estimate and to suitably build smooth approximating problems. We regularize the integrand, the source, and the boundary data through convolution but, in order to have strongly elliptic problems, we would like to add a small multiple of a strongly elliptic functional. Since we do not want to alter the q.\,u.\ convexity constant, the only viable choice is to add small multiples of $|z|^2$ to the regularized integrands. This is a quite unnatural choice if $F$ is not of standard quadratic growth, and it forces some interplay between the regularization parameters. Here, the explicit {\em a-priori} Lipschitz estimate for the corresponding solutions taken from \cite{BB} plays a key role.
 
\medskip

The main step of the proof of Theorem \ref{cordesint} is to represent the solutions of 
\[
\begin{cases}
\Div V=f\\
\curl V=G
\end{cases}
\]
in $\R^{N}$ through Riesz transforms and generalize \eqref{divcurl6} to the $L^{m}$-setting as
\[
\|DV\|_{L^{m}(\R^{N})}\le C(m, N)\, \big(\|\Div V\|_{L^{m}(\R^{N})}+\|\curl V\|_{L^{m}(\R^{N})}\big).
\]
Thus, inequality \eqref{gg} does the trick in case (i) of Theorem \ref{cordesint}, allowing reabsorption of the $\curl$ term for $K$ sufficiently near $1$. A standard Riesz interpolation argument, together with a careful choice of the norms involved, allows to prove case (ii).

\medskip

Finally, point (i) of Theorem \ref{cpprimoth} is an immediate consequence of point (i) of Theorem \ref{cordesint}, while point (ii) stems from this observation: if $u$ exhibits cylindrical symmetry, then the stress field $a(|Du|)\, Du$ is irrotational; by the Helmoltz decomposition, it can be locally represented as the gradient of a solution of $\Delta v=f\in L^{m}$, so the standard Calder\'on-Zygmund theory applies.

\subsection{Structure of the paper}
In section \ref{prel} we recall some functional inequalities and properties about quasiuniform convexity. In section \ref{3} we develop our basic estimate in the smooth setting; section \ref{locmins} is devoted to the proof of Theorem \ref{minlocint}, where the main approximation procedure, used also later, is described; section \ref{examples} contains the relevant examples depicted above. In section \ref{5} we focus on  the applications: first we treat the Cordes conditions, and finally we collect the partial results pertaining the $C^{p'}$ conjecture.

\medskip

{\bf Notations:}
\begin{itemize}
\item[-] The euclidean norm of a vector $v\in \R^{N}$ is denoted by $|v|$ and $(v, w)$ denotes the scalar product. By $\Omega$ we denote a bounded open subset of $\R^{N}$, while $B_{r}$ denotes a ball of radius $r$ not necessarily centered at the origin. Similarly, $B$ denotes a ball with unspecified center and radius; if $B=B_{r}(x_{0})$, then we set $\lambda B=B_{\lambda r}(x_{0})$.
\item[-] For a $N\times N$ matrix $A=(a_{ij})$, its transpose is denoted by $A^{t}$; on such matrices we consider the Frobenius norm
\[
|A|_{2}=\Big(\sum_{i, j=1}^{N}|a_{ij}|^{2}\Big)^{1/2},
\]
arising from the scalar product $(A, B)_2={\rm Tr}\, (A\, B^t)$. For $v, w\in \R^{N}$, $v\otimes w$ is the matrix with entries $(v_{i}\, w_{j})$, while $v\land w=v\otimes w-w\otimes v$; $I$ denotes the identity matrix, while ${\rm Id}$ the identity function of $\R^N$; $O_{N}$ is the orthogonal group of $N\times N$ matrices such that $A\, A^{t}=A^{t}\, A=I$; if $A$ is symmetric, $\lambda_{{\rm min}}(A)$, $\lambda_{{\rm max}}(A)$ are its minimum and its maximum eigenvalues. 
\item[-] If a domain of integration is missing, it means that we are integrating on $\R^{N}$. We also set for brevity $\|f\|_{m}=\|f\|_{L^{m}(\R^{N})}$. Finally, we sometimes will use the $L^{m}$ ``norm" also for $m\in (0, 1)$, when it is only positively $1$-homogeneous. 
\item[-] As customary, we indicate with $ W^{-1,p'}(\Omega) $ the dual space of $ W^{1,p}_0(\Omega) $.
\end{itemize}
\section{Preliminaries}\label{prel}

\subsection{Functional inequalities}
For $0<\theta<m$, $m>1$, starting from the inequality
\beq
\label{sob}
\int_{B_{1}} |v|^{m}\, dx\le \int_{B_{1}}|Dv|^{m}\, dx+C(N, m, \theta)\left(\int_{B_{1}} |v|^{ \theta}\, dx\right)^{m/\theta}, 
\eeq
(obtained by a standard compactness argument) and replicating, with the obvious modifications, the proof of \cite[eq. (5.4)]{CM}, we get the following functional inequality.
\begin{lemma}\label{CM}
Let $m>1$, $0<\theta<m$, and $ R \le r < s < 2R $. There exists $C=C(N, m, \theta)$ such that for any $v\in W^{1,m}(B_{s}\setminus B_{r})$ and any $\delta\in (0, 1)$  it holds
\[
\int_{B_{s}\setminus B_{r}} |v|^{m}\, dx\le \delta^{m}\, R^{m}\int_{B_{s}\setminus B_{r}} |Dv|^{m}\, dx+\frac{C}{(\delta^{N}\, (s-r)\, R^{N-1})^{\frac{m-\theta}{\theta}}}\left(\int_{B_{s}\setminus B_{r}} |v|^{\theta}\, dx\right)^{m/\theta}.
\]
\end{lemma}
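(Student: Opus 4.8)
The plan is to derive the annular inequality of Lemma \ref{CM} from the fixed-ball inequality \eqref{sob} by a rescaling and covering argument, exactly mimicking the structure of \cite[eq.~(5.4)]{CM}. First I would fix the annulus $A=B_s\setminus B_r$ with $R\le r<s<2R$, and introduce the dilation $y\mapsto x_0+R\,y$ (with $x_0$ the common center of the balls) so as to reduce to unit scale: if $\tilde v(y)=v(x_0+Ry)$, then $\int_A|v|^m\,dx=R^N\int_{\tilde A}|\tilde v|^m\,dy$ and $\int_A|Dv|^m\,dx=R^{N-m}\int_{\tilde A}|D\tilde v|^m\,dy$, where $\tilde A=B_{s/R}\setminus B_{r/R}$ is an annulus at scale comparable to $1$ (radii between $1$ and $2$). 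The geometric gain from this is the explicit power of $R$ multiplying the gradient term, and the task is reduced to proving the estimate on $\tilde A$ with a constant depending only on $N,m,\theta$ and on the thickness $(s-r)/R$.

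Next I would handle the annulus $\tilde A$ itself. The natural move is to cover $\tilde A$ by a controlled number of balls $B_1(\xi_i)$ contained in a fixed slightly larger annulus, apply \eqref{sob} on each, and sum; but the key point forcing the precise exponent $\frac{m-\theta}{\theta}$ on $\delta^N(s-r)R^{N-1}$ is that one cannot afford a scale-$1$ Poincaré–Sobolev inequality on a thin annulus without losing powers of the thickness. Instead I expect one argues as in \cite{CM}: slice $\tilde A$ radially into $\sim 1/\rho$ sub-annuli of thickness $\rho=(s-r)/R$, or alternatively cover by balls of radius $\rho$, apply on each ball the rescaled version of \eqref{sob} — namely $\int_{B_\rho}|v|^m\le \rho^m\int_{B_\rho}|Dv|^m+C\rho^{N-\frac{Nm}{\theta}}(\int_{B_\rho}|v|^\theta)^{m/\theta}$ — and add up. The number of such balls needed to cover the annular shell of thickness $\rho$ at radius $\sim 1$ is of order $\rho^{-(N-1)}$, and raising the $\theta$-integrals to the power $m/\theta$ and summing (using $\sum a_i^{m/\theta}\le(\sum a_i)^{m/\theta}$ since $m/\theta>1$) produces exactly the factor $\rho^{-(N-1)\frac{m-\theta}{\theta}}$, so that the total $\theta$-term carries $\rho^{N-\frac{Nm}{\theta}}\cdot\rho^{-(N-1)\frac{m-\theta}{\theta}}$, which one checks simplifies to the stated $(s-r)/R)^{-\frac{m-\theta}{\theta}}$-type power up to the $R$-powers already peeled off. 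Finally, to introduce the free parameter $\delta$ one rescales once more inside each small ball, or equivalently uses the scaled Sobolev inequality on balls of radius $\delta\rho$ instead of $\rho$; this converts the coefficient $\rho^m$ in front of the gradient term into $\delta^m\rho^m$ while worsening the $\theta$-coefficient by a further power $\delta^{-N\frac{m-\theta}{\theta}}$, accounting for the $\delta^N$ inside the denominator of the second term.

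The main obstacle, as in \cite{CM}, is bookkeeping the exponents so that they collapse to precisely $\frac{m-\theta}{\theta}$ on the combined geometric factor $\delta^N(s-r)R^{N-1}$: every rescaling (by $R$, by $\rho$, by $\delta$) contributes a power both to the gradient coefficient and, through the $m/\theta$-homogenization, to the lower-order coefficient, and these must be tracked simultaneously. The subadditivity step $\big(\sum_i x_i\big)^{m/\theta}\ge \sum_i x_i^{m/\theta}$ is what makes the summation over the covering lossless in the right direction, and choosing the covering to have overlap bounded independently of $\rho$ and $\delta$ is what keeps the constant depending only on $N,m,\theta$. Once the scaling identities are written out, the rest is the routine verification indicated by ``with the obvious modifications'' in the statement; I would therefore present the dilation reduction and the covering/summation carefully and then refer to \cite[eq.~(5.4)]{CM} for the remaining elementary manipulations.
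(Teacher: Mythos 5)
Your exponent bookkeeping contains a genuine error, and it conceals the fact that the isotropic ball covering you propose does not yield the constant stated in the lemma. The identity $\sum_i a_i^{m/\theta}\le\bigl(\sum_i a_i\bigr)^{m/\theta}$ for $m/\theta>1$ produces \emph{no} factor depending on the number of summands; with bounded overlap it simply gives $\sum_i\bigl(\int_{B_\sigma(\xi_i)}|v|^\theta\bigr)^{m/\theta}\le C\bigl(\int_A|v|^\theta\bigr)^{m/\theta}$. The alleged factor $\rho^{-(N-1)\frac{m-\theta}{\theta}}$ that you claim the summation ``produces'' does not appear. If one carries out the covering by balls of radius $\sigma=\delta(s-r)$ correctly, the Sobolev inequality \eqref{sob} rescaled to $B_\sigma$ reads
\[
\int_{B_\sigma}|v|^m\,dx\le\sigma^m\int_{B_\sigma}|Dv|^m\,dx+C\,\sigma^{-\frac{N(m-\theta)}{\theta}}\Bigl(\int_{B_\sigma}|v|^\theta\,dx\Bigr)^{m/\theta},
\]
and after summation one ends up with the lower-order coefficient $\sigma^{-N(m-\theta)/\theta}=(\delta(s-r))^{-N(m-\theta)/\theta}$, not $(\delta^N(s-r)R^{N-1})^{-(m-\theta)/\theta}$. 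Since $s-r<R$, one has $(\delta(s-r))^N\le\delta^N(s-r)R^{N-1}$, so your coefficient is \emph{larger} than the one in the statement, i.e.\ your inequality is weaker. This is not a harmless loss: matching powers of $R$ shows that for $\delta$ of order one (say $\delta\gtrsim((s-r)/R)^{(N-1)/N}$) your estimate does not imply the one in the lemma, whatever choice of covering radius $\sigma\lesssim s-r$ you make—the ball-covering argument forces the two coefficients to be $\sigma^m$ and $\sigma^{-N(m-\theta)/\theta}$, and for a thin annulus no single $\sigma$ can make both match $\delta^mR^m$ and $(\delta^N(s-r)R^{N-1})^{-(m-\theta)/\theta}$ simultaneously.

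The missing idea is to respect the anisotropy of the annulus. Subdivide $B_s\setminus B_r$ into roughly $\delta^{-N}$ curvilinear boxes $Q_i$, obtained (say, in polar coordinates) as products of radial subintervals of length $\sim\delta(s-r)$ with spherical caps of angular radius $\sim\delta$. Each $Q_i$ then has diameter $\sim\delta R$ (the angular direction dominates, since $s-r<R$) and volume $\sim\delta^N(s-r)R^{N-1}$, and these boxes are uniformly bi-Lipschitz images of cubes. Applying on each $Q_i$ the combination of Poincar\'e and Jensen,
\[
\int_{Q_i}|v|^m\,dx\le C\,({\rm diam}\,Q_i)^m\int_{Q_i}|Dv|^m\,dx+C\,|Q_i|^{1-\frac{m}{\theta}}\Bigl(\int_{Q_i}|v|^\theta\,dx\Bigr)^{m/\theta},
\]
then summing over $i$ with bounded overlap and using the (correct, factor-free) subadditivity gives precisely the gradient coefficient $(\delta R)^m$ and the lower-order coefficient $(\delta^N(s-r)R^{N-1})^{-(m-\theta)/\theta}$ of the statement. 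This is the ``obvious modification'' of the argument in \cite{CM}: the pieces must be adapted to the shape of the annulus, not round balls at the scale of its thickness.

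Incidentally, the dilation-by-$R$ preliminary step you propose is fine, and your isotropic argument \emph{would} suffice for the specific application in Theorem \ref{Mth}, where $\delta$ is chosen $\lesssim(s-r)/R$; but it does not establish the lemma as stated for every $\delta\in(0,1)$.
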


The following lemma is a straightforward generalization of the well-known identity
\beq
\label{asl}
 \int  |DV|_{2}^{2}\, dx= \frac{1}{2}\int |\curl V|_{2}^{2}\, dx+\int (\Div V)^{2}\, dx,
 \eeq
 valid for $V\in C^{1}_{c}(\R^{N}; \R^{N})$.
 
\begin{lemma}\label{divrotl2}
If $V\in C^{2}(\R^{N}, \R^{N})$, then for any $\varphi\in C^{2}_{c}(\R^{N})$ it holds
\beq
\label{dr}
\begin{split}
\int\varphi^{2}\, |DV|_{2}^{2}\, dx&= \frac{1}{2}\int\varphi^{2}\, |\curl V|_{2}^{2}\, dx+\int\varphi^{2}\, (\Div V)^{2}\, dx\\
& \quad +\int \left[2\, (D\varphi^{2}, V)\, \Div V+(D^{2}\varphi^{2}, V\otimes V)_{2}\right]\, dx.
\end{split}
\eeq
\end{lemma}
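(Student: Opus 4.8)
The plan is to localise, by means of the cut-off $\varphi^{2}$, a pointwise counterpart of \eqref{asl}. For any $V\in C^{2}(\R^{N};\R^{N})$ I would first verify the differential identity
\beq
\label{pwid}
|DV|_{2}^{2}=(\Div V)^{2}+\tfrac12\,|\curl V|_{2}^{2}+\Div Z,\qquad Z_{i}:=\sum_{j=1}^{N}V_{j}\,\partial_{j}V_{i}-V_{i}\,\Div V .
\eeq
Indeed, carrying out the differentiations in $\Div Z=\sum_{i}\partial_{i}Z_{i}$, the two summands involving second derivatives of $V$ are equal with opposite signs and cancel, so that $\Div Z=\sum_{i,j}\partial_{i}V_{j}\,\partial_{j}V_{i}-(\Div V)^{2}$; on the other hand, expanding $|\curl V|_{2}^{2}=\sum_{i,j}(\partial_{i}V_{j}-\partial_{j}V_{i})^{2}$ yields the algebraic identity $\sum_{i,j}\partial_{i}V_{j}\,\partial_{j}V_{i}=|DV|_{2}^{2}-\tfrac12\,|\curl V|_{2}^{2}$, and \eqref{pwid} follows. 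Integrating \eqref{pwid} over $\R^{N}$ against $V\in C^{1}_{c}$ makes $\int\Div Z=0$ and recovers \eqref{asl}; the content of \eqref{dr} is precisely to bookkeep the extra terms produced by the localisation.

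Next I would multiply \eqref{pwid} by $\varphi^{2}$, integrate over $\R^{N}$, and integrate by parts once — legitimate since $\varphi\in C^{2}_{c}(\R^{N})$ and $V\in C^{2}$, and with no boundary contribution — obtaining
\[
\int\varphi^{2}\,\Div Z\,dx=-\int(D\varphi^{2},Z)\,dx=-\int\big(D\varphi^{2},(V\cdot\nabla)V\big)\,dx+\int(D\varphi^{2},V)\,\Div V\,dx,
\]
where $(V\cdot\nabla)V$ denotes the vector field with components $\sum_{j}V_{j}\,\partial_{j}V_{i}$. It remains to rewrite the ``convective'' term: writing $\big(D\varphi^{2},(V\cdot\nabla)V\big)=\sum_{i,j}\partial_{i}\varphi^{2}\,V_{j}\,\partial_{j}V_{i}$ and integrating by parts once more in $x_{j}$ — using that $D^{2}\varphi^{2}$ is symmetric, so that $\sum_{i,j}\partial_{i}\partial_{j}\varphi^{2}\,V_{i}V_{j}=(D^{2}\varphi^{2},V\otimes V)_{2}$ — one finds
\[
\int\big(D\varphi^{2},(V\cdot\nabla)V\big)\,dx=-\int(D^{2}\varphi^{2},V\otimes V)_{2}\,dx-\int(D\varphi^{2},V)\,\Div V\,dx .
\]
Substituting back gives $\int\varphi^{2}\,\Div Z\,dx=\int(D^{2}\varphi^{2},V\otimes V)_{2}\,dx+2\int(D\varphi^{2},V)\,\Div V\,dx$, and inserting this into the $\varphi^{2}$-weighted version of \eqref{pwid} produces exactly \eqref{dr}.

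The computation is elementary; the only genuinely delicate point is the last integration by parts. A priori one could worry that a leftover convective term survives, but the particular combination $-(D\varphi^{2},(V\cdot\nabla)V)$ recombines precisely into one copy of $(D^{2}\varphi^{2},V\otimes V)_{2}$ plus a second copy of $(D\varphi^{2},V)\,\Div V$, which is exactly what makes the identity close. (As an alternative route one could integrate by parts twice directly in $\int\varphi^{2}(\Div V)^{2}\,dx$, invoking the same algebraic identity $\sum_{i,j}\partial_{i}V_{j}\,\partial_{j}V_{i}=|DV|_{2}^{2}-\tfrac12|\curl V|_{2}^{2}$; this bypasses the auxiliary field $Z$ at the price of somewhat heavier index bookkeeping. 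Applying \eqref{asl} to $\varphi V$ directly is less convenient, as it generates spurious terms involving $D\varphi\wedge V$.)
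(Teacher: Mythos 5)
Your proof is correct. The route is essentially the same as the paper's: you both reduce the statement to the algebraic identity $\sum_{i,j}\partial_i V_j\,\partial_j V_i = |DV|_2^2 - \tfrac12|\curl V|_2^2$ and then move two derivatives off $V$ via integration by parts against the cut-off; the only cosmetic difference is that you first package the boundary contributions into the pointwise divergence term $\Div Z$, whereas the paper invokes the parallelogram identity and integrates the mixed term $\int\varphi^2\sum_{i,j} D_iV^j D_jV^i\,dx$ by parts twice directly, arriving at the same three localization terms.
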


\begin{proof}
Write, through parallelogram identity,
\[
|DV|_{2}^{2}=\frac{1}{4}|DV-DV^{t}|_{2}^{2}+\frac{1}{4}|DV+DV^{t}|_{2}^{2},
\]
then multiply by $\varphi^{2}$ and integrate to obtain
\beq
\label{rf}
\begin{split}
\int \varphi^{2}\,  |DV|^{2}_{2}&\, dx=\int \frac{\varphi^{2}}{4}\, |\curl V|_{2}^{2}\, dx+\int \frac{\varphi^{2}}{4}\textstyle\sum_{ij}\displaystyle(D_{j}V^{i}+D_{i}V^{j})^{2}\, dx\\
&=\int \frac{\varphi^{2}}{4}|\curl V|_{2}^{2}\, dx+\int \frac{\varphi^{2}}{2}\, |DV|_{2}^{2}\, dx+\int \frac{\varphi^{2}}{2}\, \textstyle \sum_{ij}D_{i}V^{j}D_{j}V^{i}\, dx.
\end{split}
\eeq
The last term is computed integrating by parts twice: for any $i, j=1, \dots, N$
\[
\begin{split}
\int \varphi^{2}\, D_{i}V^{j}D_{j}V^{i}\, dx&=-\int D_{i}\varphi^{2}\, V^{j}D_{j}V^{i}\, dx-\int \varphi^{2}\, V^{j}D_{ij}V^{i}\, dx\\
&=\int D_{ij}\varphi^{2}\, V^{j}V^{i}\, dx+\int D_{i}\varphi^{2}\, V^{i}\, D_{j}V^{j}\, dx\\
&\quad +\int D_{j}\varphi^{2}\, V^{j}D_{i}V^{i}\, dx+\int \varphi^{2}\, D_{j}V^{j}D_{i}V^{i}\, dx,
\end{split}
\]
so that summing over $i, j$ gives
\[
\int \varphi^{2}\, \textstyle \sum_{ij}\displaystyle D_{i}V^{j}D_{j}V^{i}\, dx=
\int \left[\varphi^{2}\, (\Div V)^{2}+2\,(D\varphi^{2}, V)\, \Div V+(D^{2}\varphi^{2}, V\otimes V)_{2}\right]\, dx. 
\]
Inserting this formula into \eqref{rf} yields \eqref{dr}.

\end{proof}

\subsection{Quasiuniform convexity}

In this section we show that the q.\,u.\,convexity condition provides, in a unified way, many of the properties that the usual integrands of the Calculus of Variations satisfy.

Let us begin by observing  that the gradient  of a convex $F$ is defined a.\,e.\, and belongs to $BV_{\rm loc}(\R^N)$ (see \cite{AA}). Accordingly, the second derivative  of $F$ can be decomposed in an absolutely continuous part, a jump part, and a Cantor part. If $F\in C^1(\R^N)$ the jump part vanishes, hence by requiring that $F\in C^1(\R^N)\cap W^{2, 1}_{\rm loc}(\R^N)$ we are actually excluding that $D^2F$ has a Cantor part.

Now we discuss in detail some consequences of the q.\,u.\,convexity condition; although condition (iv) of Proposition \ref{proqu} will be not used in the sequel, we prove it for the sake of completeness.

\begin{proposition}[Properties of $K$-quasiuniformly convex functions]\label{proqu}
Let $F$ be a $K$-quasiuniformly convex function. Then
\begin{enumroman}
\item
$DF$ is $K^{N-1}$-quasiconformal, hence $C^{1/K}(\R^N)$.
\item
$F$ is strictly convex and of $(p, q)$-growth, i.\,e.\,, there exists $C=C(N, K, F)>0$ and  $1<p<q<+\infty$  such that 
\beq
\label{pqg}
C^{-1}|z|^p-C \le F(z)\le C\, \big(|z|^q+1\big)
\eeq
\beq
\label{pqg2}
C^{-1}|z|^{p-1}-C \le |DF(z)|\le C\, \big(|z|^{q-1}+1\big)
\eeq
for all $z\in \R^N$. More precisely, one can take $p=1+1/K$ and $q=1+K$.
\item
If $\varphi\in C^\infty_c(\R^N; [0, +\infty))$, then $F*\varphi$ is $K$-q.\,u.\,c..
\item
Its Moreau-Yoshida regularization
\beq
\label{MY}
F_{\delta}(z)= \inf_{y\in \R^{N}}\Big\{F(y)+\frac{1}{2\, \delta} |y-z|^{2}\Big\}
\eeq
is $K$-q.\,u.\,c.. 
\end{enumroman}
\end{proposition}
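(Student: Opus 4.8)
\emph{Plan.} I will prove the four items in the order stated, the common engine being the following elementary remark on symmetric positive semidefinite (PSD) matrices: if $P=\int P_s\,d\mu(s)$ is a $\mu$-average ($\mu\ge 0$) of symmetric PSD matrices each obeying $\lambda_{\rm max}(P_s)\le K\,\lambda_{\rm min}(P_s)$, then $\lambda_{\rm max}(P)\le K\,\lambda_{\rm min}(P)$; indeed $\lambda_{\rm min}(P)=\min_{|v|=1}\int(P_s v,v)\,d\mu\ge\int\lambda_{\rm min}(P_s)\,d\mu$, while $\lambda_{\rm max}(P)=\max_{|v|=1}\int(P_s v,v)\,d\mu\le\int\lambda_{\rm max}(P_s)\,d\mu\le K\int\lambda_{\rm min}(P_s)\,d\mu$. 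More generally, if $g:[0,+\infty)\to[0,+\infty)$ is nondecreasing with $g(K\,t)\le K\,g(t)$, then the same bound passes from a symmetric PSD $P$ to $g(P)$ (functional calculus), since $\lambda_{\rm max}(g(P))=g(\lambda_{\rm max}(P))$ and $\lambda_{\rm min}(g(P))=g(\lambda_{\rm min}(P))$. Throughout I use that $DF\in W^{1,1}_{\rm loc}$ forces $D^2F\in L^1_{\rm loc}$ to be the a.e.\ pointwise Hessian (no Cantor part), and convexity gives $D^2F\ge 0$ a.e.

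\emph{Item (i).} Where $D^2F\neq 0$, condition \eqref{Kell} makes $D^2F$ positive definite with $\det D^2F=\prod_i\lambda_i\ge\lambda_{\rm max}\,\lambda_{\rm min}^{N-1}\ge\lambda_{\rm max}^N/K^{N-1}$, i.e.\ the outer distortion of $DF$ is $\le K^{N-1}$ a.e. To deduce that $DF$ is a genuine quasiconformal map (homeomorphism of $\R^N$ in $W^{1,N}_{\rm loc}$, so that \eqref{qcd} makes sense) I invoke the results of Kovalev and Maldonado \cite{KoMa,K}: the gradient of a non-affine $C^1$ convex function with $DF\in W^{1,1}_{\rm loc}$ obeying \eqref{Kell} a.e.\ is quasiconformal. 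Alternatively one argues directly — the strict convexity of item (ii) gives injectivity of $DF$, the lower $p$-growth of item (ii) gives coercivity of $F$ hence surjectivity of $DF$, and the distortion bound self-improves the integrability of $DF$ from $1$ to $N$ via a Gehring-type lemma — but then the logical order must be arranged so that strict convexity is read off from \eqref{Kell} without circularity. Once $DF$ is $K^{N-1}$-quasiconformal, the sharp local H\"older estimate for quasiconformal maps yields exponent $(K^{N-1})^{1/(1-N)}=1/K$, that is $DF\in C^{1/K}(\R^N)$ locally; see \cite{Ma}.

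\emph{Item (ii).} Injectivity of the homeomorphism $DF$ is precisely strict convexity of $F$. For the growth bounds, a $K^{N-1}$-quasiconformal self-homeomorphism of $\R^N$ is quasisymmetric with control function $\eta(t)=C\max\{t^{1/K},t^{K}\}$; applying this with base point $0$ to the points $z$ and $e_1$, subtracting the constant $DF(0)$ and normalising by $c_0:=|DF(e_1)-DF(0)|>0$ (positivity by injectivity), one gets $C^{-1}|z|^{1/K}-C\le|DF(z)|\le C(|z|^{K}+1)$, which is \eqref{pqg2} with $p=1+1/K$ and $q=1+K$. The upper bound in \eqref{pqg} follows by integrating $DF$ along the segment $[0,z]$. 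For the lower bound write $F(z)=F(0)+DF(0)\cdot z+\int_0^1 t^{-1}\,\big(DF(tz)-DF(0)\big)\cdot(tz)\,dt$; since $DF(w)-DF(0)=M_w\,w$ with $M_w:=\int_0^1 D^2F(sw)\,ds$ symmetric PSD of condition number $\le K$ (by the averaging remark), one has $\big(DF(w)-DF(0)\big)\cdot w=(M_w w,w)\ge K^{-1}\lambda_{\rm max}(M_w)|w|^2\ge K^{-1}|M_w w|\,|w|=K^{-1}|DF(w)-DF(0)|\,|w|\ge K^{-1}(C^{-1}|w|^{1/K}-C)|w|$, and integrating over $t\in[|z|^{-1},1]$ gives $F(z)\ge C^{-1}|z|^{1+1/K}-C=C^{-1}|z|^{p}-C$.

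\emph{Items (iii)--(iv), and the main obstacle.} For (iii) (with $\varphi\not\equiv 0$): $F*\varphi$ is smooth, convex (a nonnegative average of the convex functions $z\mapsto F(z-y)$), and $D^2(F*\varphi)=(D^2F)*\varphi$ is a $\varphi(y)\,dy$-average of the matrices $D^2F(x-y)\ge 0$, each of condition number $\le K$, whence \eqref{Kell} for $F*\varphi$ by the averaging remark; it is not affine, for if $(D^2F)*\varphi\equiv0$ then, taking traces, $(\Delta F)*\varphi\equiv0$ with $\Delta F\ge0$ and $\varphi\ge0,\ \varphi\not\equiv0$, forcing $\Delta F\equiv0$ a.e.\ and hence (since $D^2F\ge0$) $D^2F\equiv0$, i.e.\ $F$ affine. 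For (iv): $F_\delta$ is $C^{1,1}$ (so $DF_\delta\in W^{1,1}_{\rm loc}$) and convex, and not affine since its conjugate $(F_\delta)^*=F^*+\tfrac\delta2|\cdot|^2$ is everywhere finite (by the superlinearity from (ii)), unlike conjugates of affine functions. Moreover $DF_\delta=DF\circ(\mathrm{Id}+\delta\,DF)^{-1}$, so a.e.\ $D^2F_\delta(z)=g_\delta(P)$ with $P:=D^2F\big((\mathrm{Id}+\delta\,DF)^{-1}(z)\big)\ge0$ of condition number $\le K$ and $g_\delta(t)=t/(1+\delta t)$; as $g_\delta$ is increasing with $g_\delta(Kt)\le K\,g_\delta(t)$, the functional-calculus version of the averaging remark yields \eqref{Kell} for $F_\delta$. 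The genuinely delicate step is item (i): promoting the weak hypotheses $DF\in W^{1,1}_{\rm loc}$ plus the pointwise bound \eqref{Kell} to true quasiconformality of $DF$ — injectivity, properness and $W^{1,N}_{\rm loc}$ regularity — and pinning down the sharp H\"older exponent $1/K$; this is where one leans on the quasiconformal machinery of \cite{KoMa,K,Ma} and where the interdependence with the strict convexity and $(p,q)$-growth of item (ii) must be organised to avoid a circular argument.
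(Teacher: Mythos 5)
Your proposal follows essentially the same route as the paper: items (iii) and (iv) are the paper's computations repackaged via your ``averaging/functional-calculus'' remark (the paper computes the eigenvalues of $D^2F_\delta = D^2F(P_\delta)\,(I+\delta\, D^2F(P_\delta))^{-1}$ explicitly, which is exactly your $g_\delta(P)$ statement), and for (i) you, like the authors, ultimately delegate the hard step --- promoting the a.e.\ pointwise bound on $D^2F$ to genuine quasiconformality of $DF$ --- to \cite{KoMa,K} and quote \cite{Ma} for the H\"older exponent $1/K$. The one genuine variation is in (ii): the paper gets the two-sided bound \eqref{pqg2} by the inversion $G^*(x)=G(x/|x|^2)/|G(x/|x|^2)|^2$ and gets the lower bound on $F$ by citing $\delta$-convexity from \cite[Lemma 18]{K}, whereas you reach \eqref{pqg2} via a global quasisymmetry control function $\eta(t)=C\max\{t^{1/K},t^K\}$ and re-derive a $\delta$-convexity inequality (with the clean constant $\delta=1/K$) from the averaging remark applied to $M_w=\int_0^1 D^2F(sw)\,ds$. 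The latter is the one spot where you should be more careful: $D^2F$ is only defined a.e.\ and $DF\in W^{1,1}_{\rm loc}$ only guarantees absolute continuity of $DF$ along almost every line through $0$ (ACL property), so the identity $DF(w)-DF(0)=M_w w$ with $M_w$ of condition number $\le K$ holds a priori only for a.e.\ $w$; one then extends the resulting inequality $(DF(w)-DF(0),w)\ge K^{-1}|DF(w)-DF(0)|\,|w|$ to all $w$ by the continuity of $DF$ from (i). With that patch your derivation is a nice self-contained replacement for the citation to \cite{K}. A small bonus of your write-up over the paper's is that you explicitly verify that $F*\varphi$ and $F_\delta$ are not affine, a hypothesis of Definition \ref{defKell} that the paper leaves implicit.
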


\begin{proof}
\begin{enumroman}
\item
By the Alexandrov theorem $DF$ is differentiable a.\,e.\,, and \cite[Theorem 32.3]{V} ensures that $DF\in W^{1,N}_{\rm loc}(\R^N)$. Since \eqref{qcd} and \eqref{Kell} are equivalent up to changing the constants, $u$ is quasiuniformly convex in the sense of \cite{KoMa}. In particular \cite[Theorem 3.1]{KoMa} shows that $DF$ is $K^{N-1}$-quasiconformal. The regularity statement holds for any quasiconformal mapping (see \cite[Theorem 2.14]{Ma}).
\item
The strict convexity of $F$ follows from \cite[Lemma 3.2]{KoMa}. If $z_0$ is the unique minimum point for $F$ we can consider $F(z_0+\cdot)-F(z_0)$, so there is no loss in generality assuming $DF(0)=0$, $F(0)=0$. Let $G=DF$, which then is  $K^{N-1}$-quasiconformal. By \cite[Theorem 2.14]{Ma}
\[
|G(z)|\le C(N, K)\, \sup_{y\in B_1} |G(y)| \, |z|^{1/K},\qquad z\in B_1.
\]
Since $G^{-1}$ is still $K^{N-1}$ quasiconformal, it obeys a similar estimate, proving the lower bound
\[
|G(z)|\ge C(N, K, G) \, |z|^{K}, \qquad z\in B_1. 
\]
Finally, the inversion 
\[
G^*(x)=\frac{G(x/|x|^2)}{|G(x/|x|^2)|^2}
\]
is again $K^{N-1}$ quasiconformal on $B_1$, so that the previous estimates are transferred to the outside of $B_1$ as 
\beq
\label{K}
C^{-1}\, |z|^{1/K} \le |G(z)|\le C\, |z|^K \qquad |z|\ge 1,
\eeq
where $C=C(N, K, G)$. For $G=DF$,  $p=1+1/K$, $q=1+K$ we thus obtained \eqref{pqg2}. Moreover, by \cite[Lemma 18]{K}, $DF$ is $\delta$-convex for some $\delta=\delta(N, K)>0$, meaning that 
\beq
\label{deltc}
(DF(z)-DF(y), z-y)\ge \delta\, |DF(z)-DF(y)||z-y| \qquad \forall z, y\in \R^N.
\eeq

Using \eqref{deltc} and \eqref{pqg2}, we get
\[
F(z)=\int_0^1(DF(t\, z), z)\, dt\ge \delta \int_0^1|DF(t\, z)|\, |z|\, dt \ge \frac{\delta}{p\, C}\, |z|^{p} - C\, |z| \ge \frac{\delta}{p\, C}\, |z|^{p} - C,
\]
by  sufficiently increasing $ C $ in the last inequality. This produces the lower bound in \eqref{pqg}, while the upper bound follows from  \eqref{pqg2} alone through a similar calculation.

\item
Let $\lambda_{\rm min}(z)=\lambda_{\rm min}(D^2F(z))$, where $z$ is a second order differentiability point for $F$. From the representation
\[
\lambda_{\rm min}(z):=\inf\big\{(D^{2}F(z)\,\xi, \xi):\xi\in D\big\},
\]
where $D$ is a fixed countable dense subset of $\mathbb{S}^{N-1}$, we infer that $\lambda_{{\rm min}}$ is measurable and in $L^1_{\rm loc}(\R^N)$. Then, for any $z\in \R^{N}$ and $\xi\in D$,
\[
\begin{split}
(D^{2}F*\varphi (z)\, \xi, \xi)&=\int\varphi(z-y)\, (D^{2}F(y)\, \xi, \xi)\, dy\\
&\ge \int\varphi(z-y)\, \lambda_{{\rm min}}(y)\, |\xi|^{2}\, dx=:\tilde{\lambda}_{{\rm min}}(z)\, |\xi|^{2},
\end{split}
\]
while
\[
\int\varphi(z-y)\, (D^{2}F(y)\, \xi, \xi)\, dy\le \int\varphi(z-y)\, K\,  \lambda_{{\rm min}}(y)\, |\xi|^{2}\, dy=K\, \tilde{\lambda}_{{\rm min}}(z)\, |\xi|^{2},
\]
implying the claim.
\item
Recall that the minimum in \eqref{MY} is attained at a unique point $P_{\delta}(z)$ fulfilling
\beq
\label{proj}
P_\delta(z)+\delta\, DF(P_{\delta}(z))=z, \qquad DF_{\delta}(z)=DF(P_{\delta}(z)),
\eeq
and the so-defined function $P_{\delta}=({\rm Id}+\delta\, DF)^{-1}$ is 1-Lipschitz and a homeomorphism of $\R^{N}$, since $F\in C^{1}(\R^{N})$. Let $E$ be the set of points where $DF$ fails to be differentiable. The map ${\rm Id}+\delta\, DF$ is the gradient of a  $K$-quasiuniformly convex function, hence by point (i) is quasiconformal and satisfies the Lusin $(N)$ property, i.\,e.\,, it sends null-measure sets to null-measure sets (see \cite{V}). Thus, $P_\delta^{-1}(E)$ has zero measure. Moreover, since $P_\delta$ is Lipschitz continuous, Rademacher's theorem ensures that the set $M$ where $P_\delta$ is not differentiable has zero measure. We will prove \eqref{Kell} at any point 
\[
z\notin M\cup P_\delta^{-1}(E),
\]
the latter set having zero measure. Indeed, at any such point $z$ we have that $DF$ is differentiable at $P_\delta(z)$ and $P_\delta$ is differentiable at $z$. The Chain Rule applied to \eqref{proj} then gives
\[
\big(I+\delta\, D^2F(P_\delta(z))\big)\, DP_\delta(z)=I,\qquad D^2F_\delta(z)=D^2F(P_\delta(z))\, DP_\delta(z),
\]
which entails
\beq
\label{D2F}
D^{2}F_{\delta}(z)=D^{2}F(P_{\delta}(z))\, \big(I+\delta\, D^{2} F(P_{\delta}(z))\big)^{-1}.
\eeq
Let  the eigenvalues of $D^{2}F(P_\delta(z))$ be $\lambda_{\rm min} = \lambda_{1}\le \dots \le \lambda_{N} = \lambda_{\rm max}$. The matrices $D^{2}F(P_\delta(z))$ and $(I+\delta\, D^{2}F(P_\delta(z)))^{-1}$ have the same basis of eigenvectors, with eigenvalues $\lambda_{i}$ and $(1+\delta\, \lambda_{i})^{-1}$ respectively.
Hence  \eqref{D2F} implies that $D^{2}F_{\delta}(z)$ has eigenvalues  $\lambda_{i}/(1+\delta\, \lambda_{i})$.
As $t\mapsto t/(1+\delta\, t)$ is increasing, its minimum and maximum eigenvalues are
\[
\lambda_{\delta, \rm min}:=\frac{\lambda_{{\rm min}}}{1+\delta\, \lambda_{{\rm min}}},\qquad \lambda_{\delta, \rm max}:=\frac{\lambda_{{\rm max}}}{1+\delta\, \lambda_{{\rm max}}},
\]
which obey $\lambda_{\delta, \rm max}\le K\, \lambda_{\delta, \rm min}$ as long as $\lambda_{\rm max}\le K\, \lambda_{\rm min}$.

\end{enumroman}
\end{proof}
Due to the previous proposition, we will denote  henceforth by $p$ and $q$ the power of the lower and upper bound, respectively, for a given $K$-q.\,u.\,c.\,function $F$.
\subsection{Extensions}
We conclude with a couple of tools which will be occasionally used in the following. 

\begin{lemma}\label{locality}
Let $F\in C^{1}(B_{R})$ be a non-negative, strictly convex function, and $ \sigma \in (0,1) $.
\begin{enumroman}
\item
 There exists a strictly convex $\tilde{F}\in C^{1}(\R^{N})$ such that $\tilde{F}\big|_{B_{\sigma R}}=F$ and, for some $C$, $\alpha$ depending on $F$, $R$, $N$, $\sigma$, it holds
\beq
\label{co}
|z|^2\le \alpha(\tilde{F}(z)+1),\qquad C^{-1}{|z|}-C\le |D\tilde{F}(z)|\le C\, (|z|+1).
\eeq
\item
If $F$ is $K$-q.\,u.\,c.\, in $B_{R}$ as per Definition \ref{defKell}, and for some $\eps>0$ it holds $\lambda_{\rm min}(z) \ge \eps $ in $B_{R}\setminus B_{\sigma R}$, in addition to \eqref{co}  $\tilde{F}$ can be chosen to be $\tilde{K}$-q.\,u.\,c.\,, with $\tilde{K}=\tilde{K}(F, R, N, \sigma, \eps)$.
\end{enumroman}
\end{lemma}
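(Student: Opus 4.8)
The goal is to extend a convex $C^1$ function defined on a ball $B_R$ to all of $\R^N$, preserving $C^1$-smoothness, strict convexity, suitable growth bounds, and---in part (ii)---the $K$-q.u.c.\ structure. My plan is to perform the extension radially in a collar near $\partial B_{\sigma R}$ and then continue with an explicitly chosen globally defined convex function of quadratic growth.

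\textbf{Step 1: reduce to a radial model near the boundary of the patching region.} Fix $\sigma<\sigma'<1$. On the compact set $\overline{B_{\sigma' R}}$, the function $F$ is $C^1$ and strictly convex, so $DF$ is a homeomorphism onto its image and $|DF|$ is bounded there, say by $L$. The idea is to glue $F$ on $B_{\sigma R}$ to a function of the form $z\mapsto a|z-z_1|^2+b$ (or more flexibly $z\mapsto \phi(|z-z_1|^2)$ for a suitable smooth increasing convex $\phi$) outside $B_{\sigma' R}$, choosing the center $z_1$ and the parameters so that the two pieces can be interpolated convexly. The cleanest way is a one-dimensional patching: write the global extension as
\[
\tilde F(z)=F(z)\chi(z)+\big(a|z-z_0|^2+c\big)\big(1-\chi(z)\big)
\]
is \emph{not} automatically convex, so instead I would interpolate at the level of the \emph{one-dimensional radial profile}. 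Concretely, pass to supporting-function / Legendre-type reasoning: convexity of the glued function is guaranteed if along every line the second derivative (in the a.e.\ sense) stays nonnegative, and if the first derivatives match continuously at the interface. Thus I would build $\tilde F$ so that on the annulus $B_{\sigma' R}\setminus B_{\sigma R}$ it equals a convex function whose gradient on the inner sphere agrees with $DF$ and which on the outer sphere has been steered to the radial field $z\mapsto 2a(z-z_0)$.

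\textbf{Step 2: the explicit collar construction.} Let $m(r)=\max_{|z|=r}F(z)$-type quantities be replaced by the following: set $g(z):=F(z)+A|z|^2$ for $A>0$ large. For $A$ large enough (depending on $F$, $R$, $\sigma$, $N$), $g$ is \emph{uniformly convex} on $\overline{B_{\sigma' R}}$, i.e.\ $D^2g\ge \eps_0 I$ in the a.e.\ sense there (using that $F$ is $C^1$ convex, so $D^2F\ge 0$ as a measure, plus the added $2AI$). Then I extend $g$ to a globally $C^1$, uniformly convex function $\tilde g$ with quadratic growth using a standard construction: mollify and use the fact that a uniformly convex $C^1$ function on a ball extends (e.g.\ by Whitney-type extension followed by adding a large multiple of $|z|^2$ and re-convexifying via the convex envelope, which stays $C^1$ and uniformly convex because the data were). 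Finally put $\tilde F(z):=\tilde g(z)-A|z|^2$. On $B_{\sigma R}$ we recover $F$; the lower growth bound $|z|^2\le \alpha(\tilde F(z)+1)$ follows because $\tilde g$ has quadratic growth from below and $A$ is fixed, giving $\tilde F(z)\ge \eps_1|z|^2-C$; the gradient bounds in \eqref{co} follow from $D\tilde g$ being globally Lipschitz (uniform convexity plus the convex-envelope construction keeps $D^2\tilde g$ bounded above away from the patch, and on the compact patch it is bounded anyway) and bounded below linearly (uniform convexity forces $|D\tilde g(z)|\ge c|z|-C$). Strict convexity of $\tilde F$ on all of $\R^N$ is immediate from strict (indeed uniform) convexity of $\tilde g$ together with $\tilde F=\tilde g-A|\cdot|^2$ being strictly convex on $B_{\sigma R}$ where it equals the strictly convex $F$, and uniformly convex outside---one checks the two regimes overlap so no segment can have an affine piece.

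\textbf{Step 3: preserving $K$-q.u.c.\ in part (ii).} Here the extra hypothesis $\lambda_{\min}(D^2F(z))\ge\eps$ on the collar $B_R\setminus B_{\sigma R}$ is the crucial input: it says $F$ is \emph{already} uniformly convex there, so I need not add $A|z|^2$ on that region, only far away. The plan is to extend $DF$ (not $F$) as a quasiconformal gradient map: on $B_{\sigma' R}$, $D^2F$ satisfies $\lambda_{\max}\le K\lambda_{\min}$ and $\lambda_{\min}\ge\eps$ on the collar; I then glue the gradient field $DF$ to the radial field $z\mapsto \mu(z-z_0)$ (whose Jacobian is $\mu I$, trivially satisfying \eqref{Kell} with constant $1$) across the collar, interpolating in the class of $\delta$-convex $C^1$ maps whose Jacobians have bounded eigenratio. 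Concretely, write $D\tilde F(z)=\eta(z)DF(z)+(1-\eta(z))\mu(z-z_0)$ for a radial cutoff $\eta$ supported appropriately; compute $D^2\tilde F$ and observe that it is a convex-type combination of matrices each obeying $\lambda_{\max}\le K\lambda_{\min}$ plus error terms of order $|D\eta|\,|DF-\mu(\cdot-z_0)|$. By choosing $\mu$ comparable to the average slope of $F$ on the collar and choosing $\sigma'$ close to $\sigma$ (so $|D\eta|$ is controlled but the values $DF$ and $\mu(\cdot-z_0)$ are already close by continuity and the uniform-convexity-derived Lipschitz bounds), one makes the error terms dominated by $\eps/2\le \lambda_{\min}/2$, so the glued Jacobian still has $\lambda_{\min}>0$ and $\lambda_{\max}\le \tilde K\lambda_{\min}$ with $\tilde K$ depending only on $K$, $N$, $\eps$, $F$, $R$, $\sigma$. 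One must also verify $D\tilde F$ is a gradient (it is, being a radial modification of gradients---any field of the form $\eta(|z|)DF(z)+(1-\eta(|z|))\mu(z-z_0)$ needs a small correction to be curl-free, so more carefully I would interpolate the \emph{potentials} radially: $\tilde F(z)=\eta(|z|)F(z)+(1-\eta(|z|))\big(\tfrac{\mu}{2}|z-z_0|^2+c\big)$ and then check convexity directly, which works because both potentials are uniformly convex on the collar with comparable Hessians, so the convex combination plus the $\eta',\eta''$ error is still positive definite).

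\textbf{Main obstacle.} The delicate point is Step 3: ensuring that the radial interpolation of two potentials keeps the Hessian eigenratio bounded. Interpolating potentials introduces first- and second-derivative-of-cutoff terms $\eta'(|z|)$ and $\eta''(|z|)$ of order $1/(\sigma'-\sigma)$ and $1/(\sigma'-\sigma)^2$, which a priori destroy both positivity and the eigenratio bound. The resolution---and the part requiring care---is to first match the potentials and their gradients \emph{exactly} on the inner sphere $\partial B_{\sigma R}$ (choosing $z_0$ and $\mu$ and the constant $c$ to kill the zeroth and first order mismatch there) and to exploit $\lambda_{\min}\ge\eps$ to absorb the remaining bounded (not blowing up) curvature corrections; since after matching, $|F-(\tfrac{\mu}{2}|\cdot-z_0|^2+c)|$ and its gradient vanish to first order at $\partial B_{\sigma R}$, on the thin collar these are $O((\sigma'-\sigma)^2)$ and $O(\sigma'-\sigma)$ respectively, so the dangerous terms $\eta''\cdot O((\sigma'-\sigma)^2)$ and $\eta'\cdot O(\sigma'-\sigma)$ are in fact $O(1)$ and can be controlled, though not made small. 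Thus $\tilde K$ genuinely degrades as $\eps\to 0$ or $\sigma\to 1$, consistent with the claimed dependence $\tilde K=\tilde K(F,R,N,\sigma,\eps)$. The growth bounds \eqref{co} in this case follow as before since far from $B_R$ one simply has $\tilde F(z)=\tfrac\mu2|z-z_0|^2+c$, which is exactly quadratic.
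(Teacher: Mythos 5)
Your Step 3 for part (ii) has a genuine gap: matching $F$ with a single quadratic $\frac{\mu}{2}|z-z_0|^2+c$ to first order on the whole sphere $\partial B_{\sigma R}$ is impossible for general convex $F$. You have $N+2$ free scalar parameters ($z_0\in\R^N$, $\mu$, $c$), but must match the values and full gradients of $F$ along an $(N-1)$-dimensional sphere, i.e., infinitely many conditions; a generic $C^1$ convex function restricted to a sphere is not a quadratic. Without this matching, the terms involving $\eta'$ and $\eta''$ really do contribute at orders $1/(\sigma'-\sigma)$ and $1/(\sigma'-\sigma)^2$, and nothing in your construction reabsorbs them. The paper avoids matching entirely; it writes
\[
\tilde F(z)=\eta(z)\,F(z)+(1-\eta(z))\,\tfrac{|z|^2}{2}+C\,(|z|-\sigma R)_+^2,
\]
with $\eta$ a radial cutoff equal to $1$ on $B_{\tau R}$, $\tau=(1+\sigma)/2$, and the key point is that the Hessian of $C(|z|-\sigma R)_+^2$ has eigenvalues $2C$ and $2C(1-\sigma R/|z|)$, which are bounded away from zero on $\R^N\setminus B_{\tau R}$ (exactly where $\eta<1$ and the cutoff errors live) and can be made arbitrarily large by increasing $C$. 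Thus the error matrix from the cutoff is dominated by this penalty, and the eigenratio of $D^2\tilde F$ is bounded separately on each annulus, using $\lambda_{\min}\ge\eps$ on $B_{\tau R}\setminus B_{\sigma R}$.

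Your Step 2 has a separate logical issue for part (i). After extending $g=F+A|\cdot|^2$ to a ``uniformly convex'' $C^1$ function $\tilde g$, you set $\tilde F=\tilde g-A|\cdot|^2$; but $\tilde F$ is convex only if $D^2\tilde g\ge 2AI$ globally, which is much stronger than uniform convexity with some small constant $\eps_0$ and is in fact equivalent to already possessing a convex $C^1$ extension of $F$ itself. So the $\pm A|\cdot|^2$ device is circular: it presupposes the very extension it is meant to produce. (It also does not control the eigenratio, since subtracting $2AI$ from the Hessian degrades the ratio $\lambda_{\max}/\lambda_{\min}$ without bound as $\lambda_{\min}\downarrow 2A$.) Replacing Steps 2--3 by the single explicit formula above, with the penalty $C(|z|-\sigma R)_+^2$ and a large $C$ chosen at the end, repairs the argument and is the route the paper takes.
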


\begin{proof}
To prove (i), let $ \tau= (1+\sigma)/2 $, choose a radial cut-off function $\eta\in C^{\infty}_{c}(B_{R}; [0, 1])$ such that $\eta\equiv 1$ in $B_{\tau R}$, and define
\[
\tilde{F}(z)=\eta(z) \, F(z)+(1-\eta(z))\, \frac{|z|^{2}}{2}+ C\, (|z|- \sigma\,  R)_{+}^{2},
\]
where $C>0$ is a constant to be chosen. Clearly $\tilde{F}\in C^{1}(\R^{N})$ and obeys \eqref{co}, so it remains to show that $F$ is strictly convex for a suitable $C$. To this aim, set
\[
A(z) := \frac{1}{2} \, D^2 (|z|-\sigma R)^2 = \frac{\sigma R}{|z|} \, \frac{z}{|z|} \otimes \frac{z}{|z|} + \Big(1-\frac{\sigma R}{|z|}\Big) I,
\]
whose eigenvalues are $ 1 $ and $ 1-\sigma R/|z| $. In particular, $A$ is non-negative definite in $B_{R} \setminus B_{\sigma R}$, and its eigenvalues are uniformly bounded below in $\R^N\setminus B_{\tau R}$ by a positive constant; since $\tilde{F}$ agrees with $F$ in $B_{\sigma R}$, it follows that $\tilde{F}$ is strictly convex in $B_{\tau R}$ and in $\R^N\setminus B_R$. A straightforward computation yields
\[
\begin{split}
D^{2}\tilde{F}&=\eta\, D^{2}F+M+2\, C A\\
M&:=(1-\eta)\, I+ D\eta\otimes DF +DF\otimes D\eta -2\, D\eta\otimes z+\Big(F-\frac{|z|^2}{2}\Big)\, D^{2}\eta
\end{split}
\]
a.\,e.\, outside $ B_{\sigma R} $, and we can choose $C$ so that
\[
\Big(1-\frac{\sigma}{\tau}\Big) C=\max_{z\in B_{R}}|M(z)|_{2},
\]
ensuring 
\[
\lambda_{{\rm min}}(D^{2}\tilde{F})\ge \eta\, \lambda_{{\rm min}}(D^{2}F)+ \Big(1-\frac{\sigma}{\tau}\Big) C\qquad \text{ in $B_R\setminus B_{\tau R}$}.
\]
Summing up, $\tilde{F}$ is globally strictly convex by an elementary argument.

To prove (ii), let $\tilde{\lambda}_{{\rm min}}(z)$ and $\tilde{\lambda}_{{\rm max}}(z)$ denote the minimum and maximum eigenvalues of $D^{2}\tilde{F}(z)$, and $\lambda_{{\rm min}}, \lambda_{\rm max}$ those for $D^{2}F$. It holds
\[
\eps\le \lambda_{{\rm min}} \le \tilde{\lambda}_{{\rm min}} \le \tilde{\lambda}_{{\rm max}}\le 3\, C+ \lambda_{{\rm max}}
\]
 in $B_{\tau R}\setminus B_{\sigma R}$, so that from \eqref{Kell} we get
\[
\frac{\tilde{\lambda}_{{\rm max}}}{\tilde{\lambda}_{\rm min}}\le\frac{\tilde{\lambda}_{{\rm max}}}{\lambda_{\rm min}} \le \frac{3\, C}{\lambda_{{\rm min}}}+K\le \frac{3\, C}{\eps}+K.
\]
In $\R^{N}\setminus B_{\tau R}$ it holds
\[
(1-\sigma/\tau)\, C+\eta\, \lambda_{{\rm min}} \le \tilde{\lambda}_{{\rm min}} \le \tilde{\lambda}_{{\rm max}}\le 3\, C+\eta\, \lambda_{{\rm max}},
\]
so that
\[
\frac{\tilde{\lambda}_{{\rm max}}}{\tilde{\lambda}_{\rm min}}\le\frac{ 3\, C+ K\, \eta\, \lambda_{{\rm min}}}{(1-\sigma/\tau)\, C+\eta\, \lambda_{{\rm min}}}\le \frac{3\, \tau}{\tau-\sigma} + K.
\]
Since $\tilde{F}=F$ on $B_{\sigma R}$ and \eqref{Kell} holds for $F$ there, the claim is proved.
\end{proof}

\section{Divergence form, quasiconformal equations}

\subsection{The smooth setting}\label{3}

The core of our approach lies in the following elementary observation.

\begin{lemma}\label{basiclemma}
Let $X=P\, S$, where $P$ and $S$ are symmetric  $N\times N$ matrices and $P$ is positive definite with minimum and maximum eigenvalues $\lambda_{{\rm min}}$ and $\lambda_{{\rm max}}$. Then 
\beq
\label{matrixin}
|X-X^{t}|_{2}^{2}\le 2\, \frac{(1-\lambda_{{\rm min}}/\lambda_{{\rm max}})^{2}}{1+(\lambda_{{\rm min}}/\lambda_{{\rm max}})^{2}}\, |X|_{2}^{2}.
\eeq
\end{lemma}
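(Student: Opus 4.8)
The statement reduces to a pointwise matrix inequality, so the plan is to work in a well-chosen basis and reduce to the $2\times 2$ case. First I would diagonalize the positive definite matrix $P$: write $P = Q^t D Q$ with $Q \in O_N$ and $D = \mathrm{diag}(\lambda_1, \dots, \lambda_N)$, $\lambda_i > 0$. Since the Frobenius norm and the operation $X \mapsto X - X^t$ are invariant under the orthogonal conjugation $X \mapsto Q X Q^t$ (note $(QXQ^t)^t = Q X^t Q^t$), we may assume $P = D$ is diagonal and $S = (s_{ij})$ is symmetric. Then $X = DS$ has entries $x_{ij} = \lambda_i s_{ij}$, so $x_{ij} - x_{ji} = (\lambda_i - \lambda_j) s_{ij}$, and the diagonal entries of $X - X^t$ vanish.

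**Reduction to pairs of indices.** With these explicit entries,
\[
|X - X^t|_2^2 = \sum_{i \ne j} (\lambda_i - \lambda_j)^2 s_{ij}^2 = 2 \sum_{i < j} (\lambda_i - \lambda_j)^2 s_{ij}^2,
\]
while
\[
|X|_2^2 = \sum_{i,j} \lambda_i^2 s_{ij}^2 \ge \sum_{i < j} (\lambda_i^2 + \lambda_j^2) s_{ij}^2,
\]
discarding the nonnegative diagonal contribution $\sum_i \lambda_i^2 s_{ii}^2$. Hence it suffices to show, for each pair $i<j$ with $s_{ij} \ne 0$, the scalar inequality
\[
(\lambda_i - \lambda_j)^2 \le \frac{(1 - \lambda_{\min}/\lambda_{\max})^2}{1 + (\lambda_{\min}/\lambda_{\max})^2}\, (\lambda_i^2 + \lambda_j^2),
\]
after which one sums over $i<j$ weighted by $s_{ij}^2$ and multiplies by $2$.

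**The scalar inequality.** Set $t = \lambda_i/\lambda_j > 0$ (WLOG $\lambda_i \le \lambda_j$, so $t \in (0,1]$); dividing through by $\lambda_j^2$, the claim becomes $\frac{(1-t)^2}{1+t^2} \le \frac{(1-\rho)^2}{1+\rho^2}$ where $\rho = \lambda_{\min}/\lambda_{\max} \in (0,1]$. So the final step is to check that $\phi(t) := \frac{(1-t)^2}{1+t^2}$ is decreasing on $(0,1]$; then, since every ratio $\lambda_i/\lambda_j$ with $\lambda_i \le \lambda_j$ satisfies $\lambda_i/\lambda_j \ge \lambda_{\min}/\lambda_{\max} = \rho$, we get $\phi(\lambda_i/\lambda_j) \le \phi(\rho)$. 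Monotonicity of $\phi$ follows from a one-line derivative computation: $\phi'(t) = \frac{-2(1-t)(1+t^2) - (1-t)^2 \cdot 2t}{(1+t^2)^2} = \frac{2(1-t)(-1 - t^2 - t(1-t))}{(1+t^2)^2} = \frac{-2(1-t)(1+t)}{(1+t^2)^2} \le 0$ for $t \in (0,1]$.

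**Main obstacle.** There is no serious obstacle here — the only point requiring a little care is the bookkeeping in the reduction step: one must not forget that the diagonal terms $s_{ii}$ contribute nothing to $|X-X^t|_2^2$ but contribute nonnegatively to $|X|_2^2$, and that each off-diagonal pair $\{i,j\}$ appears twice in the unrestricted sums. Making the orthogonal-invariance reduction precise (checking that conjugation by $Q \in O_N$ preserves symmetry of $S$, positive definiteness and eigenvalues of $P$, the product structure $X = PS$, the Frobenius norm, and the antisymmetrization) is routine but should be stated explicitly so the reduction to the diagonal case is clean.
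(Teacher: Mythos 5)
Your proof is correct and follows essentially the same route as the paper's: diagonalize $P$ by orthogonal invariance, write $X_{ij}=\lambda_i S_{ij}$, reduce to the pairwise scalar inequality, and invoke monotonicity of $t\mapsto (1-t)^2/(1+t^2)$ on $(0,1]$. The only (minor) difference is that you verify the monotonicity by an explicit derivative computation, which the paper states without proof.
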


\begin{proof}
Inequality \eqref{matrixin} is invariant under rotations; thus, without loss of generality, we can suppose $P_{ij}=\lambda_{i}\delta_{ij}$ with $0<\lambda_{1}\le \dots \le \lambda_{N}$. Then from $X=P\, S$ we get
\[
X_{ij}=\lambda_i\, S_{ij},
\]
so that
\[
|X-X^{t}|_{2}^{2}=\sum_{ij}|X_{ji}-X_{ij}|^{2}=2\,\sum_{i<j}|X_{ji}-X_{ij}|^{2}=2\, \sum_{i<j}|\lambda_j\, S_{ji}-\lambda_i\, S_{ij}|^{2},
\]
and from the symmetry of $S$ we conclude
\beq
\label{l1}
|X-X^{t}|_{2}^{2}=2\, \sum_{i<j}S_{ij}^2\, (\lambda_j-\lambda_i)^{2}.
\eeq
Similarly, we have
\beq
\label{l2}
|X|_{2}^{2}\ge\sum_{i<j}\big(X_{ij}^{2}+X_{ji}^{2}\big)= \sum_{i<j} S_{ij}^2(\lambda_i^2+\lambda_j^2).
\eeq
Let 
\[
\varphi (t)=\frac{(1-t)^2}{1+t^2},
\]
which is decreasing in $[0, 1]$, and observe that for $j>i$ we have $\lambda_i/\lambda_j\in [0, 1]$. Therefore 
\[
(\lambda_j - \lambda_i)^2=\frac{(\lambda_j- \lambda_i)^2}{\lambda_j^2+\lambda_i^2}\, (\lambda_j^2+\lambda_i^2)=\varphi\Big(\frac{\lambda_i}{\lambda_j}\Big)\, (\lambda_j^2+\lambda_i^2)\le \varphi\Big(\frac{\lambda_{\rm min}}{\lambda_{\rm max}}\Big)(\lambda_j^2+\lambda_i^2).
\]
Inserting this estimate in \eqref{l2} and recalling \eqref{l1} we get
\[
|X-X^{t}|_{2}^{2}\le 2\,   \varphi\Big(\frac{\lambda_{\rm min}}{\lambda_{\rm max}}\Big) \sum_{i<j} S_{ij}^2(\lambda_i^2+\lambda_j^2)=2\,   \varphi\Big(\frac{\lambda_{\rm min}}{\lambda_{\rm max}}\Big)\, |X|_{2}^{2} .
\]
\end{proof}

\begin{theorem}\label{Mth}
Let $u\in C^{2}(B_{2R})$ solve 
\[
\Div(DF(Du))=f\qquad \text{in $B_{2R}$}
\]
for a $K$-q.\,u.\,c.\,$F\in C^2(\R^N)$, and let 
\[
V(x)=DF(Du(x)).
\] 
Then for any $\theta\in (0, 2]$ there exist $C=C(N, K, \theta)$ and $C_{R}=C(N, K, \theta, R)$ such that 
\beq
\label{w12est}
\|V\|_{W^{1,2}(B_{R})}\le C\, \|f\|_{L^{2}(B_{2R})} + C_{R}\, \|V\|_{L^{\theta}(B_{2R})}.
\eeq
\end{theorem}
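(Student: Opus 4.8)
The plan is to combine the pointwise matrix inequality of Lemma \ref{basiclemma} with the localized div-curl identity of Lemma \ref{divrotl2}, applied to the vector field $V=DF(Du)$. Since $u\in C^2(B_{2R})$ and $F\in C^2(\R^N)$, the field $V$ is $C^1$ and we have the pointwise factorization $DV=D^2F(Du)\,D^2u$, where $P=D^2F(Du)$ is symmetric positive definite with eigenvalues satisfying $\lambda_{\rm max}\le K\lambda_{\rm min}$ a.\,e.\,(by $K$-q.\,u.\,convexity), and $S=D^2u$ is symmetric. Hence Lemma \ref{basiclemma} gives, pointwise in $B_{2R}$,
\[
|\curl V|_2^2=|DV-DV^t|_2^2\le 2\,\frac{(1-1/K)^2}{1+1/K^2}\,|DV|_2^2=:2\,c_K\,|DV|_2^2,
\]
where $c_K=\varphi(1/K)<1$ since $\varphi$ is decreasing on $[0,1]$ and $\varphi(1)=0$. (Here I use that the eigenvalue ratio $\lambda_{\rm min}/\lambda_{\rm max}$ is bounded below by $1/K$, and $\varphi(t)=(1-t)^2/(1+t^2)$ is decreasing.)

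Next I would pick a cutoff $\varphi\in C^\infty_c(B_{2R};[0,1])$ with $\varphi\equiv 1$ on $B_R$ and $|D\varphi|\le C/R$, $|D^2\varphi|\le C/R^2$ (so the same bounds hold for $D\varphi^2$, $D^2\varphi^2$ up to constants). Apply Lemma \ref{divrotl2}: since $\Div V=f$ in $B_{2R}$, it reads
\[
\int\varphi^2|DV|_2^2\,dx=\tfrac12\int\varphi^2|\curl V|_2^2\,dx+\int\varphi^2 f^2\,dx+\int\big[2(D\varphi^2,V)f+(D^2\varphi^2,V\otimes V)_2\big]\,dx.
\]
Using the curl bound to absorb, $\tfrac12\int\varphi^2|\curl V|_2^2\le c_K\int\varphi^2|DV|_2^2$ with $c_K<1$, so the first term on the right can be moved to the left, leaving a coefficient $(1-c_K)>0$ in front of $\int\varphi^2|DV|_2^2$. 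The remaining terms are controlled by Cauchy--Schwarz and Young: $\int\varphi^2 f^2\le\|f\|_{L^2(B_{2R})}^2$; $\int 2(D\varphi^2,V)f\le \tfrac{C}{R}\|V\|_{L^2(B_{2R})}\|f\|_{L^2(B_{2R})}$; and $\int(D^2\varphi^2,V\otimes V)_2\le \tfrac{C}{R^2}\|V\|_{L^2(B_{2R})}^2$. Thus
\[
\int_{B_R}|DV|_2^2\,dx\le C(N,K)\,\|f\|_{L^2(B_{2R})}^2+\frac{C(N,K)}{R^2}\,\|V\|_{L^2(B_{2R})}^2.
\]
Adding $\int_{B_R}|V|^2\le\|V\|_{L^2(B_{2R})}^2$ yields the $W^{1,2}(B_R)$ estimate with an $L^2(B_{2R})$ tail.

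The last step is to upgrade the $L^2(B_{2R})$ tail to an $L^\theta(B_{2R})$ tail for arbitrary $\theta\in(0,2]$ (for $\theta=2$ nothing is needed). This is a standard interpolation/iteration device: run the above argument on nested balls $B_r\subset B_s\subset B_{2R}$ with $R\le r<s<2R$, tracking the dependence of the constant on $(s-r)$ (it comes out polynomially, from $|D\varphi|\le C/(s-r)$, $|D^2\varphi|\le C/(s-r)^2$), to get
\[
\int_{B_r}|DV|_2^2\le C\|f\|_{L^2(B_{2R})}^2+\frac{C}{(s-r)^2}\int_{B_s\setminus B_r}|V|^2\,dx+\frac{C}{(s-r)^2}\int_{B_r}|V|^2\,dx,
\]
and then invoke Lemma \ref{CM} (with $m=2$) on the annulus to trade $\|V\|_{L^2(B_s\setminus B_r)}^2$ for $\delta^2R^2\int_{B_s\setminus B_r}|DV|_2^2+C(\delta)\|V\|_{L^\theta(B_s\setminus B_r)}^{2}$; combining with a similar bound on $B_r$ and using a standard hole-filling iteration (Giaquinta--Giusti lemma on the family of radii in $[R,2R)$) absorbs all the $|DV|_2^2$ terms and leaves only $\|f\|_{L^2(B_{2R})}^2$ and $\|V\|_{L^\theta(B_{2R})}^{2}$ on the right. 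Taking square roots gives \eqref{w12est}.

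The main obstacle is the bookkeeping in this final iteration step — ensuring the constants from the cutoff and from Lemma \ref{CM} depend on $(s-r)$ in a way compatible with the hole-filling lemma, and handling the sub-linear regime $\theta<1$ where $\|\cdot\|_{L^\theta}$ is only positively homogeneous rather than a norm (so one must work with the functional $v\mapsto\int|v|^\theta$ directly, as the paper's conventions already allow). The rest is routine: the div-curl identity and the matrix lemma are the conceptual heart and both are already in hand.
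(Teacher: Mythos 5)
Your plan is the paper's plan: Lemma \ref{basiclemma} to control $\curl V$, the localized div--curl identity of Lemma \ref{divrotl2}, absorption of the curl term using $c_K<1$, then Lemma \ref{CM} plus a Giaquinta-type iteration to trade the $L^2$ tail for an $L^\theta$ tail. Two points, one substantive and one cosmetic, deserve comment.

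First, you claim $P=D^2F(Du)$ is symmetric \emph{positive definite}, so that Lemma \ref{basiclemma} applies directly. This is not justified: a $K$-q.\,u.\,c.\ $F\in C^2$ need not have $D^2F$ positive definite everywhere -- for the model case $F(z)=|z|^p$ with $p>2$ one has $D^2F(0)=0$, and $\{Du=0\}$ may have positive measure. The paper handles this by first replacing $F$ by $F_\eps(z)=F(z)+\eps|z|^2/2$ (and $f$ by $f_\eps=f+\eps\Delta u$, so $u$ still solves the $\eps$-equation), which shifts all eigenvalues up by $\eps$ and preserves the constant $K$ in \eqref{Kell}; Lemma \ref{basiclemma} then applies cleanly, and one sends $\eps\to 0$ at the end. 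You can avoid the regularization by remarking that under \eqref{Kell} the only way $D^2F(Du(x))$ fails to be positive definite is $D^2F(Du(x))=0$, and then $DV(x)=0$ so the curl estimate at $x$ is trivially $0\le 0$. Either way, the step needs to be said; as written it is a gap.

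Second, your intermediate estimate before the iteration carries a spurious $\frac{C}{(s-r)^2}\int_{B_r}|V|^2$ term. With $\varphi\equiv 1$ on $B_r$ and $\supp\varphi\subset B_s$, the derivatives $D\varphi^2$, $D^2\varphi^2$ are supported in the annulus $B_s\setminus B_r$, so the boundary contributions in \eqref{dr} give only $\frac{C}{(s-r)^2}\int_{B_s\setminus B_r}|V|^2$, exactly as in \eqref{po}. The $L^2(B_R)$ piece of the $W^{1,2}$ norm is added only at the very end, from the rescaled inequality \eqref{sob}, after the iteration has produced control on $\int_{B_R}|DV|^2$ in terms of $\|f\|_{L^2(B_{2R})}$ and $\|V\|_{L^\theta(B_{2R})}$. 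Your final paragraph describes the iteration correctly in spirit, and your remark about $\theta<1$ (work with $\int|v|^\theta$, not a norm) is exactly right.
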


\begin{proof}
For any $\eps>0$ let
\[
F_{\eps}(z)=F(z)+\eps\, \frac{|z|^{2}}{2}, \qquad f_{\eps}=f+\eps\, \Delta u,
\]
so that $D^{2}F_{\eps}$ is symmetric and positive definite. It holds
\[
\lambda_{{\rm min}}(D^{2}F_{\eps}(z))=\lambda_{{\rm min}}(D^{2}F(z))+\eps, \qquad \lambda_{{\rm max}}(D^{2}F_{\eps}(z))=\lambda_{{\rm max}}(D^{2}F(z))+\eps,
\]
so that if \eqref{Kell} holds for $F$, it does so  for $D^{2}F_{\eps}$ as well, with the same constant $K$.

Clearly $u$ solves 
\[
\Div (DF_{\eps}(Du))=f_{\eps}
\]
in $B_{2R}$. Letting
\[
V_\eps=DF_{\eps}(Du),
\]
it holds
\[
D V_\eps=D^{2}F_{\eps}(Du)\, D^{2}u,
\]
where the first matrix is symmetric positive definite and the second one is symmetric. We thus apply Lemma \ref{basiclemma}   to the matrix 
\[
P=D^2F_\eps(Du),\qquad S=D^2u, \qquad X=DV_\eps=P\, S.
\]
Recall that $D^2F_\eps(Du)$ fulfills \eqref{Kell} with constant $K$, whence
\[
\frac{\big({\lambda_{{\rm max}}(D^2F_\eps(Du))-\lambda_{\rm min}}(D^2F_\eps(Du))\big)^{2}}{\lambda_{{\rm max}}^2(D^2F_\eps(Du))+\lambda_{{\rm min}}^{2}(D^2F_\eps(Du)) } \le  \frac{(K-1)^2}{K^2+1}. 
\]
From \eqref{matrixin} we get
\beq
\label{in1}
|\curl V_\eps|_{2}^{2}\le 2\, \frac {(K-1)^2}{K^2+1}\, |DV_\eps|_{2}^{2}.
\eeq
 
 For any $r, s$ with $R\le r<s\le 2\, R$, fix $\varphi\in C^{\infty}_{c}(B_{s}, [0, 1])$ such that 
 \beq
 \label{propvarphi}
 \left.\varphi\right|_{B_{r}}\equiv 1, \qquad |D\varphi|\le \frac{C}{s-r},\qquad |D^{2}\varphi|\le \frac{C}{(s-r)^{2}}.
 \eeq
This will allow to consider $\varphi\, V_\eps$ as defined on the whole $\R^{N}$, so that \eqref{dr} holds true. The stipulated properties of $\varphi$ ensure that 
\[
\int 2\, (D\varphi^{2}, V_\eps)\, f_{\eps}+(D^{2}\varphi^{2}, V_\eps\otimes V_\eps)_{2}\, dx
\le \frac{C}{(s-r)^{2}}\int_{B_{s}\setminus B_{r}}|V_\eps|^{2}\, dx+C\int_{B_{2R}}f_{\eps}^{2}\, dx,
\]
where we used the Schwartz inequality on the first term and $s\le 2R$. Using also \eqref{in1} to control the curl term of Lemma \ref{divrotl2} yields
\[
\int \varphi^{2}  |DV_\eps|_{2}^{2}\, dx\le \Big(1-\frac{1}{K}\Big)^2 \int \varphi^{2}\, |DV_\eps|_{2}^{2} \, dx+\frac{C}{(s-r)^{2}}\int_{B_{s}\setminus B_{r}}|V_\eps|^{2}\, dx+C\int_{B_{2R}}f_{\eps}^{2}\, dx.
\]
We let $\eps\to 0$ and bring the first term on the right to the left-hand side; recalling that $\varphi\equiv 1$ on $B_{r}$, we obtain
\beq
\label{po}
\int_{B_{r}}|DV|_{2}^{2}\, dx\le \frac{C_{K}}{(s-r)^{2}}\int_{B_{s}\setminus B_{r}}|V|^{2}+ C_{K}\int_{B_{2R}}f^{2}\, dx
\eeq
for any $R\le r<s\le 2R$.
We next proceed as in \cite{CM}: by Lemma \ref{CM} with $m=2$ and 
\[
\delta=\frac{s-r}{2\, \sqrt{C_{K}}\, R}
\] 
we get
\[
\frac{C_{K}}{(s-r)^{2}}\int_{B_{s}\setminus B_{r}}|V|^{2}\, dx\le \frac{1}{4}\int_{B_{s}\setminus B_{r}}|DV|_{2}^{2}\, dx+ \frac{C_{K}\, R^{\frac{2-\theta}{\theta}}}{(s-r)^{2+\frac{2-\theta}{\theta}(N+1)}}\left(\int_{B_{s}\setminus B_{r}}|V|^{\theta}\, dx\right)^{2/\theta}
\]
which inserted into \eqref{po} gives, for all $R\le r<s\le 2R$,
\[
\int_{B_{r}}|DV|_{2}^{2}\, dx\le \frac{1}{4}\int_{B_{s}}|DV|_{2}^{2}\, dx+C_{K}\int_{B_{2R}}f^{2}\, dx +\frac{C_{K}\, R^{\frac{2-\theta}{\theta}}}{(s-r)^{2+\frac{2-\theta}{\theta}(N+1)}}\left(\int_{B_{s}\setminus B_{r}}|V|^{ \theta}\, dx\right)^{2/\theta}.
\]
A standard iteration lemma (see \cite[Lemma 3.1, Ch 5]{G}) improves the latter to
\[
\int_{B_{R}}|DV|_{2}^{2}\, dx\le C_{K}\int_{B_{2R}}f^{2}\, dx +\frac{C_{K}}{R^{2+\frac{2-\theta}{\theta}N}}\left(\int_{B_{2R}}|V|^{\theta}\, dx\right)^{2/\theta},
\]
which is the desired estimate on the derivative of $V$. In order to control $\|V\|_{L^{2}(B_{R})}^{2}$, we invoke the rescaled form of \eqref{sob} which, in conjunction with the previous estimate, completes the proof of \eqref{w12est}.
\end{proof}

\subsection{Local minimizers}\label{locmins}

For a bounded $\Omega\subseteq \R^{N}$ we let
\[
J(w, \Omega)=\int_{\Omega} F(Dw)\, dx+\int_{\Omega} f\, w\, dx
\]
whenever the two integrands are in $L^{1}(\Omega)$, sometimes omitting the dependence on $\Omega$ when this causes no confusion. We will consider  $J$ under $p$-coercivity assumptions on $F$ and for  $f\in W^{-1, p'}(\Omega)$, so that it is  well defined on $W^{1,p}(\Omega)$.

Recall that $u\in W^{1,p}_{{\rm loc}}(\Omega)$ is a local minimizer for $J$ in $W^{1, p}(\Omega)$ if, for any $B\Subset \Omega$,
\beq
\label{defj2}
J(u, B)=\inf\big\{J(w, B): w\in u+ W^{1,p}_{0}(B)\big\}.
\eeq

\begin{theorem}\label{minloc}
Let $F\in C^{1}(\R^{N})$ be a q.\,u.\,c.\,function and $q>p>1$ be given in Proposition \ref{proqu}, point (ii). For $f\in L^{2}(\Omega)\cap W^{-1, p'}(\Omega)$, let $u$ be a local minimizer $u$ of $J$ in $\Omega$. Then, for any ball $B$ such that $4 B\subseteq \Omega$ it holds $DF(Du)\in W^{1,2}(B)$, 
\beq
\label{mainest2}
\|DF(Du)\|_{L^2(B)}\le C\Big(1+\|f\|_{L^{2}(2 B)}+\|F(Du)\|_{L^{1}(2 B)}^{(q-1)/p}\Big)
\eeq
for  $C=C(K, N, B)>0$, and for any $\theta\in (0, 2]$
\beq
\label{mainest}
\|DF(Du)\|_{W^{1,2}(B)}\le C\Big(\|f\|_{L^{2}(2 B)}+\|DF(Du)\|_{L^{\theta}(2 B)}\Big)
\eeq
for $C=C(K, N, B, \theta)>0$.
 Moreover, $u$ satisfies the Euler-Lagrange equation
\beq
\label{EL}
\int_\Omega (DF(Du), D\varphi)\, dx=\int_\Omega f\, \varphi\, dx\qquad \forall \varphi\in C^\infty_c(\Omega).
\eeq
\end{theorem}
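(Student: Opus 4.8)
The plan is to reduce Theorem \ref{minloc} to the smooth, global estimate of Theorem \ref{Mth} by a careful regularization of the integrand, the source, and the boundary datum, keeping the q.\,u.\,convexity constant $K$ under control throughout. First I would fix a ball $B$ with $4B\subseteq\Omega$ and, using Proposition \ref{proqu}(ii), record that $u\in W^{1,p}_{\rm loc}(\Omega)$ with $F(Du)\in L^1_{\rm loc}$, so that $DF(Du)\in L^{p'}_{\rm loc}$ and $|Du|^{p-1}\lesssim |DF(Du)|$ locally. On a slightly larger ball $2B$ I would produce a sequence of obstacle-free minimizers $u_n$ of regularized functionals $J_n$ whose integrands $F_n$ are obtained by (a) mollifying $F$ (Proposition \ref{proqu}(iii) keeps $K$ unchanged) and (b) adding $\eps_n|z|^2/2$ to make the problem strongly elliptic without changing $K$ (as in Theorem \ref{Mth}), while $f$ is mollified to $f_n$ and the boundary datum is a mollification of $u$. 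Standard direct-methods arguments give $u_n\in W^{1,2}_{\rm loc}\cap C^\infty$ of $2B$ (interior regularity for strongly elliptic smooth functionals), and the a-priori Lipschitz bound from \cite{BB} furnishes a local $L^\infty$ estimate on $Du_n$ that is \emph{uniform} in $n$ once the parameters $\eps_n$ are linked appropriately to the mollification scale — this is the delicate interplay flagged in the outline.

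Next I would apply Theorem \ref{Mth} to each $u_n$ on the ball $2B$ (shrinking radii appropriately): with $V_n=DF_n(Du_n)$ this yields
\[
\|V_n\|_{W^{1,2}(B)}\le C\,\|f_n\|_{L^2(\tfrac32 B)}+C_B\,\|V_n\|_{L^\theta(\tfrac32 B)}
\]
with $C=C(N,K,\theta)$ independent of $n$. To pass to the limit I need uniform control of the right-hand side: $\|f_n\|_{L^2}\le\|f\|_{L^2(2B)}$ is immediate, and $\|V_n\|_{L^\theta(\tfrac32 B)}$ is bounded via $|V_n|\lesssim |Du_n|^{q-1}+1$ together with the uniform Lipschitz bound, or — to get the sharper dependence on $\|F(Du)\|_{L^1}$ in \eqref{mainest2} — via the energy comparison $J_n(u_n)\le J_n(\text{datum})$, which bounds $\int_{2B}F_n(Du_n)$ by $\int_{2B}F(Du)$ plus lower-order terms, hence controls $\|Du_n\|_{L^p}$ and then $\|V_n\|_{L^{p'}}$ by the growth bounds \eqref{pqg}--\eqref{pqg2}; choosing $\theta=p'$ produces the exponent $(q-1)/p$ in \eqref{mainest2}. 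The uniform $W^{1,2}(B)$ bound then gives, up to subsequences, $V_n\rightharpoonup W$ weakly in $W^{1,2}(B)$ and $u_n\to u$ strongly in $W^{1,p}$ (using strict convexity and the minimality, as in standard $\Gamma$-convergence/lower-semicontinuity arguments), whence $Du_n\to Du$ a.\,e.\, and, since $DF_n\to DF$ locally uniformly (mollification plus the vanishing quadratic perturbation), $V_n\to DF(Du)$ a.\,e., identifying $W=DF(Du)$. Weak lower semicontinuity of the $W^{1,2}$ norm then yields \eqref{mainest} and \eqref{mainest2}.

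Finally, the Euler--Lagrange equation \eqref{EL} follows once we know $DF(Du)\in L^1_{\rm loc}$ (indeed $L^{p'}_{\rm loc}$, now even $W^{1,2}_{\rm loc}$): the convexity of $F$ and minimality give, for $\varphi\in C^\infty_c(\Omega)$ and $t>0$, that $t\mapsto J(u+t\varphi)$ is convex with a minimum at $t=0$; the growth bound $|DF(z)|\le C(|z|^{q-1}+1)$ together with $Du\in L^q_{\rm loc}$ — which itself comes out of the a priori estimates — justifies differentiating under the integral sign (dominated convergence on the difference quotients $ (F(Du+tD\varphi)-F(Du))/t $, dominated by $|DF(Du+D\varphi)|\,|D\varphi|\in L^1$), giving \eqref{EL}.

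I expect the main obstacle to be the \emph{construction and uniform control of the approximating problems}: because $F$ need not have standard quadratic growth, adding $\eps_n|z|^2/2$ is unnatural and the resulting minimizers $u_n$ a priori live only in $W^{1,2}_{\rm loc}$ with $n$-dependent estimates; making the Lipschitz bound of \cite{BB} uniform forces a precise coupling between $\eps_n$, the mollification parameter, and the radii, and one must simultaneously ensure $u_n\to u$ (not to some other minimizer) — this requires the strict convexity from Proposition \ref{proqu}(ii) and care with the boundary data. Everything downstream (applying Theorem \ref{Mth}, passing to the limit, deriving \eqref{EL}) is comparatively routine given the uniform bounds.
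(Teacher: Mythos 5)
Your proposal follows the paper's overall strategy (mollify $F$, $f$, and the boundary datum, add $\eps_n|z|^2/2$ to make each approximating problem strongly elliptic without altering $K$, apply Theorem \ref{Mth}, then pass to the limit), but a few steps are stated incorrectly. The Lipschitz bound from \cite{BB} is \emph{not} uniform in $n$: the perturbed integrand $F_n=F*\varphi_{\eps_n}+\tfrac{\mu_n}{2}|z|^2$ is only $\mu_n$-uniformly convex, so ${\rm Lip}(v_n)\le A_n/\mu_n=:L_n\to\infty$. In the paper this $n$-dependent bound is used only to control the perturbation term $\mu_n\int|Dv_n|^2$ when $p<2$ (via the imposed decay $\mu_n^{p-1}A_n^{2-p}\to0$) and to calibrate the mollification radius $\eps_n$ against $M_n\sim L_n$; the uniform control on $V_n=DF_n(Dv_n)$ comes solely from the energy comparison. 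Also, the choice $\theta=p'$ does not deliver the exponent $(q-1)/p$: bounding $\|V_n\|_{L^\theta}$ via $\int F(Dv_n)$ by Jensen requires $\theta(q-1)/p\le1$, and $p'=p/(p-1)>p/(q-1)$ precisely when $q>p$. The paper takes $\bar\theta=\min\{p/(q-1),1\}$, and the exponent $(q-1)/p$ in \eqref{mainest2} originates from the pointwise growth inequality $|DF(z)|^{\bar\theta}\lesssim(F(z)+1)^{\bar\theta(q-1)/p}$, not from the particular value of $\theta$.

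The one genuine gap is in your derivation of \eqref{EL}. Dominated convergence of the difference quotients $(F(Du+tD\varphi)-F(Du))/t$ requires $|DF(Du+D\varphi)|\,|D\varphi|\in L^1_{\rm loc}$, hence $Du\in L^{q-1}_{\rm loc}(\Omega)$. A priori $Du\in L^p_{\rm loc}$, which suffices only when $q-1\le p$, i.e.\ $K\le(1+\sqrt5)/2$; the a priori improvement $DF(Du)\in L^{2^*}_{\rm loc}$ gives $Du\in L^{2^*(p-1)}_{\rm loc}=L^{2^*/K}_{\rm loc}$, which reaches $L^{q-1}_{\rm loc}$ only when $2^*\ge K^2$ — this fails for large $K$ in dimensions $N\ge3$ (compare the range in Remark \ref{rempq}). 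The paper sidesteps this completely by passing to the limit in the Euler--Lagrange equations of the approximating problems, using the strong $L^2(B')$ convergence $DF_n(Dv_n)\to DF(Du)$ already established. Finally, your identification $W=DF(Du)$ via a Visintin-type upgrade of $Dv_n\rightharpoonup Du$ to a.\,e.\ convergence (weak convergence plus convergence of the strictly convex energies) is a workable alternative to the paper's route, which instead uses Arens' theorem to deduce $DF_n^{-1}\to DF^{-1}$ locally uniformly and then $Dv_n=DF_n^{-1}(V_n)\to DF^{-1}(V)$ a.\,e.\ from the (strong $L^2$, hence a.\,e.) convergence $V_n\to V$.
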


\begin{proof}
Let ${\rm Argmin} (F)=\{z_0\}$. By considering $\tilde{F}(z)=F(z+z_{0})-F(z_{0})$ and $\tilde{u}(x)=u(x)-(z_{0},  x)$ and noting that $f(\cdot)\, (z_{0}, \cdot )\in L^{1}_{{\rm loc}}(\Omega)$, it is readily checked that $\tilde u$ turns out to be a  local minimizer of 
\[
\tilde{J}(w, \Omega)=\int_{\Omega} \tilde{F}(Dw) +f\, w\, dx.
\]
Hence, hereafter we suppose $F(z)\ge F(0)=0$. Finally, recalling Proposition \ref{proqu}, point (ii), we know that $F$ is strictly convex and 
\beq
\label{fdf}
C^{-1}|z|^p-C\le F(z)\le C\,\big( |z|^q+1),\qquad |DF(z)|\le C\, (|z|^{q-1}+1),
\eeq
so that $u$ is the unique minimizer locally, with respect to its own boundary values.
We split the proof in several steps.

\medskip
{\em Step 1} (Approximating problems)\\
Fix $\varphi\in C^{\infty}_{c}(B_{1}, [0, +\infty[)$ such that $\|\varphi\|_{1}=1$ and let $\varphi_{\eps}(x)=\eps^{-N}\, \varphi(x/\eps)$. 
For  $B\Subset \Omega$ and $n\in \N$, let $f_{n}=f*\varphi_{\frac{1}{n}}$ and, for $\eps_{n}, \mu_{n}\to 0^{+}$ to be chosen, 
\[
J_{n}(w)=\int_{B}F*\varphi_{\eps_{n}}(Dw)+\frac{\mu_{n}}{2}\, |Dw|^{2} +f_{n}\, w\, dx.
\]
Set 
\[
{\rm Lip}_{\psi}(B)=\{w\in {\rm Lip}(\overline{B}):w=\psi\ \text{on $\partial B$}\}.
\]
According to \cite[Theorem 9.2]{S}, there is a solution $v_{n}\in {\rm Lip}_{u*\varphi_{\frac{1}{n}}}(B)$ of 
\[
J_{n}(v_{n})=\inf\{J_{n}(w):w\in {\rm Lip}_{u* \varphi_{\frac{1}{n}}}(B)\},
\]
since $u*\varphi_{\frac{1}{n}}$ is smooth on $\partial B$ (thus satisfying the Bounded Slope Condition) and also $f_{n}$ is smooth. Moreover, for any {\em fixed} $n$ there is no Lavrentiev gap for $J_{n}$; see \cite[p. 5923]{BMT}. Hence $v_{n}$ also solves 
\[
J_{n}(v_{n})=\inf\{J_{n}(w):w\in u*\varphi_{\frac{1}{n}}+W^{1,p}_{0}(B)\}.
\]

{\em Step 2} (Determining the parameters)\\
For any choice of $\eps_{n}, \mu_n$, the integrand
\[
F_n(z):=F*\varphi_{\eps_{n}}(z) +\frac{\mu_{n}}{2}\, |z|^{2}
\]
is  $\mu_{n}$-uniformly convex, hence \cite[Theorem 4.1]{BB} ensures the existence of constants $A_{n}$ (depending only on $B$ and $\|f_{n}\|_{\infty}$, as well as on the regularity of $u*\varphi_{\frac{1}{n}}$, but not on $\eps_{n}$, $\mu_{n}$), such that 
\beq
\label{defln}
{\rm Lip}(v_{n})\le \frac{A_{n}}{\mu_{n}}=:L_n.
\eeq
Without loss of generality, we can assume $ L_n \ge 1 $. We first choose $\mu_{n}\downarrow 0$ so that 
\beq
\label{condmun}
\lim_{n}\mu_{n}\int_{B}|Du*\varphi_{\frac{1}{n}}|^{2}\, dx=0,\qquad \lim_{n}\mu_{n}^{p-1}\, A_{n}^{2-p} =0,
\eeq
and observe that $L_{n}$ is independent of $\eps_{n}$. Then we choose $\eps_{n}$:  define the numbers
\[
M_{n}=1+\sup_{B} |Du*\varphi_{\frac{1}{n}}| + L_n.
\]
Since $\varphi_\eps * F\to F$ in $C^1_{\rm loc}(\R^N)$ as $\eps\downarrow 0$, we can pick $(\eps_{n})\subseteq (0, 1)$, $\eps_{n}\downarrow 0$, so that 
\beq
\label{epsn}
\|F*\varphi_{\eps_n}-F\|_{C^1(B_{M_n})}\le \frac{1}{n}.
\eeq
Clearly, it still holds
\beq
\label{fn}
F_{n}\to F\qquad \text{in $C^{1}_{{\rm loc}}(\R^{N})$}.
\eeq

{\em Step 3} (The limsup inequality)\\
Testing the minimality of $v_{n}$ against the admissible function $u*\varphi_{\frac{1}{n}}$ gives 
\[
J_{n}(v_{n})\le  J_{n}(u*\varphi_{\frac{1}{n}}).
\]
Owing to $u*\varphi_{\frac{1}{n}}\to u$ in $W^{1,p}(B)$ and \eqref{condmun}, one has
\beq
\label{varlimsup}
\varlimsup_{n} J_{n}(u*\varphi_{\frac{1}{n}})=\int_{B}f\, u\, dx+\varlimsup_{n} \int_{B} F*\varphi_{\eps_{n}}(Du*\varphi_{\frac{1}{n}})\, dx.
\eeq
To estimate the last integral, we use \eqref{epsn} to get
\[
 \int_{B} F*\varphi_{\eps_{n}}(Du*\varphi_{\frac{1}{n}})\, dx\le \frac{|B|}{n}+\int_{B}F(Du*\varphi_{\frac{1}{n}})\, dx.
 \]
The vector-valued Jensen inequality then leads to
\[
\begin{split}
 \varlimsup_{n} \int_{B} F*\varphi_{\eps_{n}}(Du*\varphi_{\frac{1}{n}})\, dx&\le \varlimsup_{n} \int_{B} F(Du*\varphi_{\frac{1}{n}})\, dx\\
 &\le \varlimsup_{n} \int_{(1+1/n)B} \varphi_{\frac{1}{n}}*F(Du)\, dx\\
 &=\int_{B}F(Du)\, dx.
 \end{split}
 \]
Inserting the latter into \eqref{varlimsup} we conclude
 \beq
 \label{limsup}
 \varlimsup_{n} J_{n}(v_{n})\le J(u).
 \eeq

 {\em Step 4} (Convergence of $(v_{n})$ to $u$)\\
 From \eqref{limsup} we have that $(J_{n}(v_{n}))$ is bounded. By Jensen's inequality $F\le F *\varphi_{\eps_{n}}$ so that, through \eqref{fdf}, for some constant $C=C(N, F, B)>0$ we have
\[
 \begin{split}
 J_{n}&(v_{n})\ge \int_{B} F(Dv_{n})\, dx+\int_{B}f_{n}\, v_{n}\, dx\\
 &\ge  \frac{\|Dv_{n}\|_{L^{p}(B)}^{p}}{C} - C-\|f_{n}\|_{W^{-1, p'}(B)}\|D(v_{n}-u*\varphi_{\frac{1}{n}})\|_{L^{p}(B)}+\int_{B} f_{n}\, u*\varphi_{\frac{1}{n}}\, dx\\
 & \ge \frac{\|Dv_{n}\|_{L^{p}(B)}^{p}}{C} - C-\|f_{n}\|_{W^{-1, p'}(B)}\big(\|Dv_{n}\|_{L^{p}(B)}+\|D u*\varphi_{\frac{1}{n}}\|_{L^{ p}(B)}\big)+\int_{B} f_{n}\, u*\varphi_{\frac{1}{n}}\, dx.
  \end{split}
 \]
 Since $f_{n}\to f$ in $W^{-1, p'}(B)$ and $u*\varphi_{{1/n}}\to u$ in $W^{1,p}(B)$, by \eqref{limsup} we deduce that $(Dv_{n})$ is bounded in $L^{p}(B)$. Moreover, by Poincar\'e's inequality, 
 \[
 \begin{split}
 \|v_{n}\|_{L^{p}(B)}&\le \|v_{n}-u*\varphi_{\frac{1}{n}}\|_{L^{p}(B)}+\|u*\varphi_{\frac{1}{n}}\|_{L^{p}(B)}\\
 &\le C\, \big(\|D(v_{n}-u*\varphi_{\frac{1}{n}})\|_{L^{p}(B)}+\|u*\varphi_{\frac{1}{n}}\|_{L^{p}(B)}\big)\\
 &\le C\, \big(\|Dv_{n}\|_{L^{p}(B)}+\|u*\varphi_{\frac{1}{n}}\|_{W^{1,p}(B)}\big),
 \end{split}
 \]
 so that $(v_{n})$ is bounded in $L^{p}(B)$ as well. Therefore the sequence $(v_{n})$ is bounded in $W^{1,p}(B)$, and hence possesses a (not relabeled) subsequence weakly convergent to some $v\in W^{1,p}(B)$; actually, it is readily checked that $v\in u+W^{1,p}_{0}(B)$. We claim that
\[
\lim_n \mu_{n}\int_{B}|Dv_{n}|^{2}\, dx =0.
\]
Indeed, this is obvious by H\"older's inequality when  $p\ge 2$, while if $p<2$ we use \eqref{defln} and \eqref{condmun} to infer
\[
\mu_{n}\int_{B}|Dv_{n}|^{2}\, dx\le\mu_{n}\, L_{n}^{2-p}\int_{B} |Dv_{n}|^{p} \, dx\le  \mu_{n}^{p-1}\, A_{n}^{2-p}\int_{B}|Dv_{n}|^{p}\, dx\quad \to \quad 0,
\]
where we used the boundedness of $(Dv_{n})$ in $L^{p}(B)$. Thus
\beq
\label{mnvn}
\lim_n\int_{B}\frac{\mu_{n}}{2}\,  |Dv_n|^{2}+f_{n}\, v_{n}\, dx=\int_{B}f\, v\, dx.
\eeq
The functional
\[
w\mapsto \int_{B}F(Dw)\, dx
\]
is weakly lower semicontinuous in $W^{1,p}(B)$, whence (again by the Jensen inequality)
\beq
\label{qpl}
\begin{split}
J(v)&\le \varliminf_{n} \Bigg[ \int_{B} F(Dv_{n})\, dx+\int_B f\, v_n\, dx \Bigg] \\
&\le\varliminf_n \Bigg[ \int_{B} F*\varphi_{\eps_n}(Dv_{n})\, dx + \frac{\mu_n}{2} \int_{B} |Dv_n|^{2}+f_{n}\, v_{n}\, dx \Bigg] =\varliminf_{n} J_{n}(v_{n}).
\end{split}
\eeq
 Coupling the latter with \eqref{limsup} we get $ J(v)\le J(u)$, implying $v=u$ by the strict convexity of $J$. In particular we obtain, up to subsequences,
 \beq
 \label{wvn}
 Dv_{n}\rightharpoonup Du \qquad \text{in $L^{p}(B)$},
 \eeq
 and from \eqref{qpl}, \eqref{limsup} we infer $J_n(v_n)\to J(u)$. Subtracting  \eqref{mnvn} we get
\[
 \int_B F*\varphi_{\eps_n}(Dv_n)\to \int_B F(Du)
\]
which, thanks to \eqref{epsn}, implies
  \beq
 \label{fn3}
  \int_B F(Dv_n)\to \int_B F(Du).
  \eeq
 
 {\em Step 5} (Uniform Sobolev bound on $DF(Dv_{n})$)\\ 
By Proposition \ref{proqu}, point (iii), and the beginning of the proof of Theorem \ref{Mth}, $F_{n}$ fulfills \eqref{Kell} with the same constant $K$; since $F_{n}\in C^{3}(\R^N)$, \eqref{Kell} actually holds everywhere. Moreover, standard regularity theory ensures that $v_{n}\in C^{2}(B)$, so we can apply Theorem \ref{Mth}, and in particular \eqref{w12est}, to obtain, for any  $\theta\in (0, 2]$ and $r=1/2, 1$,
\beq
\label{h}
\|DF_{n}(Dv_{n})\|_{W^{1,2}(\frac r 2 B)}\le C\, \big(\|f_{n}\|_{L^{2}(rB)}+\|DF_{n}(Dv_{n})\|_{L^{\theta}(rB)}\big).
\eeq
The first term on the right is clearly bounded by a multiple of $\|f\|_{L^{2}(B)}$. For the second one, we let $p, q$ be given in \eqref{fdf} and  choose $\bar\theta=\min\{p/(q-1), 1\}$. By \eqref{fdf} we get
\beq
\label{growth}
|DF(z)|^{\bar\theta}\le C\, \big(|z|^{\bar\theta\, (q-1)}+1\big)\le C\, (|z|^p+1)^{\bar\theta\, (q-1)/p}\le C \big(F(z)+1\big)^{\bar\theta\, (q-1)/p}.
\eeq
Using \eqref{epsn} and \eqref{growth} we obtain
\beq
\label{vvvv}
\begin{split}
\int_{B} &|DF_{n}(Dv_{n})|^{\bar\theta}\, dx \le \int_{B}|DF(Dv_{n})|^{\bar\theta}\, dx+n^{-\bar\theta}\, |B| +  \mu_n^{\bar\theta} \int_B |Dv_n|^{\bar\theta}\, dx\\
&\le C\, \int_B\big(F(Dv_n)+1\big)^{\bar\theta\, (q-1)/p}\, dx+n^{-\bar\theta}\, |B| + \mu_n^{\bar\theta}\, \|Dv_n\|_{L^{ p}(B)}^{\bar\theta}|B|^{1-\bar\theta/p}\\
&\le C\, |B|^{1-\bar\theta\, (q-1)/p}\Big(\int_B \big(F(Dv_n)+1\big)\, dx\Big)^{\bar\theta\, (q-1)/p}+n^{-\bar\theta}\, |B| + \mu_n^{\bar\theta}\, \|Dv_n\|_{L^{ p}(B)}^{\bar\theta}|B|^{1-\bar\theta/p}.
\end{split}
\eeq
The first integral is bounded by   \eqref{fn3} and the remaining terms vanish when $ n \to \infty $, so
\beq
\label{fert}
\varlimsup_n\|DF_n(Dv_n)\|_{L^{\bar\theta}(B)}\le C\,\Big(\int_B \big(F(Du)+1\big)\, dx\Big)^{(q-1)/p}.
\eeq
Thanks to \eqref{h} for $r=1$, \eqref{fert} implies the Sobolev bound
\beq
\label{gal}
\varlimsup_{n} \|D F_{n}(Dv_{n})\|_{W^{1, 2}(\frac{1}{2}B)}\le C.
\eeq

{\em Step 6} (Passage to the limit)\\
Let $B'=\frac{1}{2}B$ and
\[
V_{n}=DF_{n}(Dv_{n}).
\]
Thanks to \eqref{gal}, $(V_{n})$ is bounded in $W^{1,2}(B')$, hence we can pick a subsequence satisfying 
\beq
\label{conv}
V_{n}\to V\qquad \text{weakly in $W^{1,2}(B')$, strongly in $L^{2}(B')$, and pointwise a.\,e.\, in $B'$,}
\eeq
for a suitable $V\in W^{1,2}(B')$.

Each $F_{n}$ is strictly convex and superlinear by construction, hence $DF_{n}$ is a homeomorphism of $\R^{N}$. Moreover, by \eqref{fn} we know that $DF_{n}\to DF$ locally uniformly. According to a theorem by Arens (see \cite{D} for a modern exposition), this implies that $DF_{n}^{-1}\to DF^{-1}$ locally uniformly. Since $V_{n}\to V$ pointwise a.\,e., we infer that 
\[
Dv_{n}=DF_{n}^{-1}(V_{n})\to DF^{-1}(V)\qquad \text{pointwise a.\,e.},
\]
which, coupled with \eqref{wvn}, allows the identification $Du=DF^{-1}(V)$. Therefore, $V_{n}\to DF(Du)$ in $B'$ in all the senses prescribed in \eqref{conv}. 

By considering  $4 B$ instead of $B$, estimate \eqref{mainest2} follows from \eqref{h} and \eqref{fert}, as long as $4 B\subseteq \Omega$.
Let $B''=\frac{1}{4} B$. By Lebesgue's Dominated Convergence theorem $\|V_{n}\|_{L^{\theta}(B')}\to \|V\|_{L^{\theta}(B')}$ hence, exploiting also the lower semi-continuity of the $W^{1, 2}(B'')$ norm, we can pass to the limit in \eqref{h} with $r=1/2$. Again considering $4 B$ instead of $B$ yields  \eqref{mainest}. 
Finally, the validity of \eqref{EL} can be checked only on balls $B$ such that $4B\subseteq \Omega$, by a standard partition of unity argument. If $B$ is such a ball, we can pass to the limit in the Euler Lagrange equations for the approximating problems constructed as before in $4B$, and since $DF_n(Dv_n)\to DF(Du)$, $f_n\to f$ strongly in $L^2(B)$, we get \eqref{EL}.
 \end{proof}

\begin{remark}\label{rempq}
The previous theorem has an immediate consequence. The class of q.\,u.\,c.\,functional is a (proper) subclass of the so-called {\em functionals with $(p, q)$-growth}, i.\,e.\, those obeying \eqref{pqg}, \eqref{pqg2}. For example, the integrand
\[
F(z)=|z_1|^p+|z_2|^q, \qquad z=(z_1, z_2)\in \R^2, \qquad p, q>1
\]
is of $(p, q)$-growth but not q.\,u.\,c.\,unless $p=q$. Given a local minimizer of a convex functional of {\em general} $(p, q)$-growth, the a-priori regularity on $Du$ is just $Du\in L^p_{\rm loc}(\Omega)$, and the first step towards higher regularity is showing that actually $Du\in L^q_{\rm loc}(\Omega)$. For  $2\le p\le N$, this is to be expected  only when $q<p\, (N+2)/N$; see \cite{Gi, ELM1}. For the sub-class of  q.\,u.\,c.\,integrands, from $DF(Du)\in W^{1,2}_{\rm loc}(\Omega)$, we infer by Sobolev's embedding that $DF(Du)\in L^{2^*}_{\rm loc}(\Omega)$.  Since $|DF(z)|\succsim |z|^{p-1}$, it holds $Du\in L^{2^*(p-1)}_{\rm loc}(\Omega)$, hence for such class of integrands we obtain the  condition  $q\le 2^*\, (p-1)$, which gives a larger range if $p\ge 2$. This range may not be optimal in the q.\,u.\,c.\,class, but on one hand it shows the advantages of considering the stress field instead of \eqref{defvp}, while on the other hand it holds for any $f\in L^2_{\rm loc}(\Omega)\cap W^{-1, p'}(\Omega)$. It is quite possible, in light of the results of \cite{BM}, that, for $f$ having a sufficiently high degree of summability, minimizers for q.\,u.\,c.\,integrands are automatically Lipschitz continuous, regardless of the largeness of the ratio $q/p$, a fact that, if true, would bypass completely the higher integrability issue for the gradient in the q.\,u.\,c.\,class. This is actually the case for functionals with Uhlenbeck structure, see \cite{CM2}.
\end{remark}

\subsection{Examples}\label{examples}

\begin{example}[On the assumption $F\in W^{2,1}$]\label{excantor}
In this example we show that, in order to obtain Sobolev regularity of $DF(Du)$, it is not sufficient to require that  condition \eqref{Kell} holds at almost every point, but that Sobolev regularity of $DF$ is a necessary assumption.

Let $N=2$. For any ball $B\Subset \{(x, y)\in \R^{2}:x>0\}$, consider the smooth function
\[
u(x, y)=\arctan(y/x).
\]
We claim that, for any (not necessarily convex) $C^{2}$  function $F:\R\to \R$, $u$ solves
\beq	
\label{claimf}
\Div DF(Du)=0\qquad \text{in $B$},
\eeq
where here and in what follows we make the identification $F(z)=F(|z|)$. Letting $z=(x, y)$, $z^{\bot}=(-y, x)$, it holds
\[
Du(z)=\frac{z^{\bot}}{|z|^{2}}, \qquad D^{2}u(z)=\frac{1}{|z|^{4}}
\begin{pmatrix}
2\, x\, y&y^{2}-x^{2}\\
y^{2}-x^{2}&-2\, x\, y
\end{pmatrix},
\]
while
\[
DF(w)=F'(|w|)\frac{w}{|w|},\qquad D^{2}F(w)=F''(|w|)\frac{w}{|w|}\otimes \frac{w}{|w|}+\frac{F'(|w|)}{|w|}\left(I-\frac{w}{|w|}\otimes \frac{w}{|w|}\right).
\]
An elementary computation then yields
\[
D^{2}F(Du)\, D^{2}u(z)=\frac{1}{|z|^{4}}\begin{pmatrix}
x\, y\, \big(F''(\frac{1}{|z|})+F'(\frac{1}{|z|})\, |z|\big)&F''(\frac{1}{|z|})\, y^{2}-F'(\frac{1}{|z|})\, |z|\, x^{2}\\[10pt]
F'(\frac{1}{|z|})\, |z|\, y^{2}- F''(\frac{1}{|z|})\, x^{2}&- x\, y\, \big(F''(\frac{1}{|z|})+F'(\frac{1}{|z|})\, |z|\big)
\end{pmatrix}
\]
which has zero trace, proving the claim.

Now, let $h:[0, 1]\to[0, 1]$ denote the Cantor function. By abuse of notation, we still denote by $h$ its extension to the whole $\R$ defined as
\[
h(t)=k+h(t-k)\quad \text{if $k\le t<k+1$, $k\in \Z$},
\]
and we also denote by $\cal C$ the periodic extension of the Cantor set to the whole $\R$.
Consider
\[
F(|w|):=\frac{|w|^{2}}{2}+H(|w|),\qquad H(t):=\int_{0}^{t}h(\tau)\, d\tau,
\]
which is a strictly convex $C^{1}$ function with quadratic growth. Clearly, $F$ can be approximated in $C^{1}$ by a sequence $\{F_{n}\}$ of smooth radial functions, so that we can pass to the limit into the corresponding weak formulations of \eqref{claimf} to obtain that $u$ solves 
\[
\Div DF(Du)=0\qquad \text{weakly in $B$}.
\]
However, 
\[
DF(Du(z))=\frac{z^{\bot}}{|z|^{2}} +h(|z|^{-1})\, \frac{z^{\bot}}{|z|}
\]
is not even absolutely continuous in $B$, since its distributional derivative has a Cantor part concentrated on $\{z:1/|z|\in {\cal C}\}$, which has zero measure. 

Since $h'(t)=0$ in the classical sense for a.\,e.\, $t\in \R$, it is readily verified that $F$ obeys  \eqref{Kell} with $K=1$ at every point of 
\[
\R^{N}\setminus \cal C_{{\rm rad}}, \qquad \cal C_{{\rm rad}}=\{z\in \R^{N}:|z|\in \cal C\},
\]
thus almost everywhere. 
Notice that $DF$ is of bounded variation  but does not belong to $W^{1,1}_{\rm loc}(\R^N)$, since its derivative has a Cantor part concentrated on ${\cal C}_{\rm rad}$.
\end{example}

\begin{example}[Uhlenbeck structure]\label{exuhl}
For divergence form equations having the Uhlenbeck structure
\beq
\label{Uhlenbeck}
\Div(a(|Du|)\, Du)=f
\eeq
we recover the local regularity result of \cite[Theorem 2.1]{CM}, under the additional assumption $f\in W^{-1, p'}(\Omega)$ (see the second point in Remark \ref{rem0} in this respect).  Indeed we can define $F$  by
\[
F(z)=\int_0^{|z|} t\, a(t)\, dt
\]
to get $DF(z)=a(|z|)\, z$. If $a\in C^{1}(0, +\infty)$, it holds
\[
D^{2}F(z)=a(|z|)\, I+|z|\, a'(|z|)\, \frac{z}{|z|}\otimes \frac{z}{|z|},
\]
possessing the eigenvector $z/|z|$ with eigenvalue $a(|z|)+|z|\, a'(|z|)$, while its orthogonal eigenspace is relative to the unique eigenvalue $a(|z|)$. The equation is elliptic if and only if both eigenvalues are non-negative, and in order to bound the ratio between them we look at 
\[
K=\sup_{t>0}\max\left\{\frac{a(t)}{a(t)+t\, a'(t)}, \frac{a(t)+t\, a'(t)}{a(t)}\right\}.
\]
It is readily checked that if
\[
i_{a}=\inf_{t>0}\frac{t\, a'(t)}{a(t)},\qquad s_{a}=\sup_{t>0}\frac{t\, a'(t)}{a(t)},
\]
then 
\[
K=\max\left\{\frac{1}{1+i_{a}}, 1+s_{a}\right\},
\]
so that the q.\,u.\,convexity condition \eqref{Kell} is equivalent to the common requirement
\beq
\label{indexes}
-1<i_{a}\le s_{a}<\infty,
\eeq
which is the one used, e.\,g., in \cite{CM}. Notice that when $a(z)=|z|^{p-2}$, which corresponds to the $p$-Poisson equation, we have $i_{a}=s_{a}=p-2$, so that $K=\max\{p-1, \frac{1}{p-1}\}$.

When $f$ is sufficiently smooth (say, bounded), equation \eqref{Uhlenbeck} can be seen as the Euler-Lagrange equation for the functional \eqref{defJ}, which is strictly convex. Moreover, it holds (see \cite[Proposition 2.15]{CM2}) 
\[
F(z)\ge a(1)\frac{|z|^{2+i_{a}}}{2+i_{a}}, \qquad |z|\ge 1,
\]
so that $J$ is also coercive on $W^{1, 2+i_{a}}(\Omega)$ when supplemented with reasonable boundary conditions. Since $2+i_{a}>1$ by \eqref{indexes}, the variational treatment of \eqref{Uhlenbeck} in standard Sobolev spaces is thus justified if one is not looking for optimal rearrangement invariant estimates.
\end{example}

\begin{example}[Anisotropic examples]\label{Cir}
In \cite{CFR, ACF, CFV1, CFV2} anisotropic equations whose principal part arises as the Euler-Lagrange equation of 
\[
\int_\Omega G(H(Du))\, dx
\]
are considered, where $H\in C^2(\R^N\setminus\{0\}, \R_+)$ is a convex, positively $1$-homogeneous function and $G\in C^2(\R_+, \R_+)$ is an increasing, strictly convex function of $p$-growth. Clearly, $H$ is fully determined by the unit ``ball" 
\[
B_H=\{z\in \R^N: H(z)<1\}\ni 0,
\]
and any open, bounded, convex $B$ with $0\in B$ will uniquely determine such an $H$ through its Minkowski functional. Notice that in general $H$ may not even be a norm, due to the possible lack of symmetry of $B_H$. 

The more general ellipticity assumption in the cited works reads as follows: $H$ is said to be {\em uniformly convex} if the principal curvatures of $\partial B_H$ are bounded from below by a positive constant. From \cite[Appendix A]{CFV1}, the uniform ellipticity of $H$ amounts to 
\[
\big(H(z)\, D^2H(z) \, \eta, \eta\big)\ge \delta\,  |\eta|^2\qquad \forall z\in \R^N\setminus \{0\}, \ \eta\in DH(z)^\bot,
\]
for some $\delta>0$ (which is actually equivalent to the same type of inequality for $\eta\in z^\bot$).
Under this assumption various results can be proved, and in particular the Sobolev regularity of the associated stress field is treated in \cite{CFR, ACF} as a stepping-stone to more general results. 
Here we show that anisotropic functionals of this kind fall within our general framework of q.\,u.\,c.\,functionals.

To this end, set $F(z)=G(H(z))$ so that 
\[
D^2F(z)=G''(H(z))\, DH(z)\otimes DH(z)+\frac{G'(H(z))}{H(z)}\, H(z)\, D^2 H(z)
\]
for $z\ne 0$, and notice that both
\[
DH(z)\otimes DH(z) \quad \text{and}\quad H(z)\, D^2 H(z)
\]
are $0$-homogeneous.
Inspecting the proof of \cite[Appendix A]{CFV1}, we see that for any $z, \xi\in \R^N\setminus\{0\}$ it either holds
\[
 \big(DH(z)\otimes DH(z)\, \xi, \xi\big)\ge \lambda_1\, |\xi|^2
\]
or
\[
 \big(H(z)\, D^2H(z)\, \xi, \xi\big)\ge \lambda_2\, |\xi|^2
\]
(with $\lambda_i=\lambda_i(H)>0$), while altogether
\[
\big(H(z)\, D^2H(z)\, \xi, \xi\big)\le \Lambda\, |\xi|^2, \qquad   \big(DH(z)\otimes DH(z)\, \xi, \xi\big)\le \Lambda\, |\xi|^2
\]
for some $\Lambda=\Lambda(H)$. It follows that the minimum eigenvalue of $D^2F(z)$ is bounded from below by 
\[
\min\big\{\lambda_1 \, G''(H(z)), \lambda_2\, G'(H(z))/H(z)\big\},
\]
while its maximum eigenvalue is bounded from above by 
\[
\Lambda \big( G''(H(z))+G'(H(z))/H(z)\big).
\]
Therefore $F$ is $K$-q.\,u.\,c.\,for 
\[
K=\frac{\Lambda}{\min\{\lambda_1, \lambda_2\}}\sup_{t\in \R_+}\max\left\{1+\frac{G'(t)}{G''(t)\, t}, 1+\frac{G''(t)\, t}{G'(t)}\right\}.
\]
The finiteness of the latter is compatible with the standard Uhlenbeck example: for $a(t)=G'(t)/t$, a straightforward calculation shows
that 
\[
\sup_{t\in \R_+}\max\left\{\frac{G'(t)}{G''(t)\, t}, \frac{G''(t)\, t}{G'(t)}\right\}<+\infty\quad \Leftrightarrow \quad -1<i_a\le s_a<+\infty.
\]

\end{example}

\begin{example}[Non-standard anisotropic growth]\label{exani}
It is straightforward to check that if $F_{1}$ and $F_{2}$ are q.\,u.\,c.\, $C^{1}$ functions fulfilling \eqref{Kell} with constant $K_{1}$ and $K_{2}$ respectively, then $F_{1}+F_{2}$ is $\max\{K_{1},K_{2}\}$-q.\,u.\,c.. This observation allows to consider anisotropic Euler-Lagrange equations arising from integrands of the form 
\[
 F(z)=\sum_{i=1}^{M}|A_{i}(z-z_{i})|^{p_{i}}, \qquad z_{i}\in \R^{N},\quad A_{i}\ge I, \quad p_{i}>1 \quad \text{for $i=1, \dots, M$}
 \]
As a  more common example of anisotropic functionals, consider the weak solution of the Dirichlet problem
 \[
\begin{cases}
\Delta_{p} u+D_{1}(|D_{1}u|^{q-2}\, D_{1}u)=f&\text{in $B$}\\
u=0&\text{on $\partial B$}
\end{cases}
 \]
 for a ball $B$, $f\in L^{\infty}(B)$ and $q>p>2$. This corresponds to the unique minimizer $u\in W^{1, p }_{0}(B)$ of 
 \[
 J(w)=\int_{B}\frac{1}{p}|Dw|^{p}+\frac{1}{q}|D_{1}w|^{q}+f\, w\, dx,
 \]
 and it is globally Lipschitz continuous, since 
 \[
 F(z)=\frac{1}{p}|z|^{p}+\frac{1}{q}|z_{1}|^{q}
 \]
 is uniformly $p$-convex outside $B_{1}$ (see \cite{BB}), i.\,e.,
 \[
 (D^{2}F(z)\, \xi, \xi)\ge c_{p}(1+|z|^{2})^{\frac{p-2}{2}}|\xi|^{2},\qquad |z|\ge 1.
 \]
 Moreover, 
 \[
 D^{2}F(z)=|z|^{p-2}\Big(I+(p-2)\, \frac{z}{|z|}\otimes\frac{z}{|z|} \Big)+(q-1)|z_1|^{q-2} \, e_1 \otimes e_1,
 \]
 so that the minimum and maximum eigenvalues of $D^{2}F(z)$ fulfill
 \[
 \lambda_{{\rm min}}(z)\ge |z|^{p-2},\qquad \lambda_{{\rm max}}(z)\le (p-1)\, |z|^{p-2}+(q-1)\, |z|^{q-2}.
 \]
 Notice that the integrand $F$ is {\em not} q.\,u.\,c.\,globally on $\R^N$, but on the range of $Du$ we have $|z|^{q-2}\le C\, |z|^{p-2}$ for $C$ depending on ${\rm Lip}\, u$, leading to
 \[
 \lambda_{{\rm max}}(z)/\lambda_{{\rm min}}(z)\le p-1+C\, (q-1).
 \]
 Hence Theorem \ref{minloc} applies thanks to the local nature of \eqref{Kell} described in Lemma \ref{locality}.
 Notice the role of the assumption $f\in L^{\infty}(B)$  (which could be weakened, but not down to $L^{2}(B)$, see \cite{BM}) and of the smooth boundary condition $u\in W^{1,p}_0(B)$: they provide the Lipschitz regularity of $u$, which in turn allows to employ Lemma \ref{locality} and to consider $u$ as a minimizer of a functional with  q.\,u.\,c.\,integrand on the whole $\R^N$.
\end{example}

\section{Applications}\label{5}

\subsection{Cordes-type conditions}

We start with a generalization of \eqref{asl}. In this section, given a matrix field $M:\Omega\to \R^{N}\otimes \R^{N}$, its $L^{m}$-norm will be computed with respect to the Frobenius norm of $M=(m_{ij})$, i.\,e.,
\[
\|M\|_{m}=\left(\int |M(x)|_{2}^{m}\, dx\right)^{1/m},\qquad |M(x)|_{2}^{2}=\sum_{i, j=1}^{N}|m_{ij}(x)|^{2}.
\]

The proofs of this section make use of some tools from Harmonic Analysis; for an introductory exposition on this topic, the reader can consult \cite{Duo}.

\begin{lemma}
Let $V\in C^{1}_{c}(\R^{N}; \R^{N})$ and, for $m>1$, set $\hat{m}=\max\{m, m/(m-1)\} \ge 2 $. Then
\beq
\label{lp}
\|DV\|_{m}\le N^{2}\, (\hat{m}-1)\,\Big( \|\Div V\|_{m}+\|\curl V\|_{m}\Big).
\eeq
\end{lemma}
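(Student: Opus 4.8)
The plan is to recover $DV$ from $\Div V$ and $\curl V$ via an explicit singular-integral representation on $\R^N$, and then invoke the $L^m$ boundedness of the Riesz transforms componentwise. Writing $R_j = \partial_j (-\Delta)^{-1/2}$ for the $j$-th Riesz transform, the key identity is that for $V \in C^1_c(\R^N;\R^N)$ one has, in Fourier variables,
\[
\widehat{\partial_k V^i}(\xi) = 2\pi i\, \xi_k\, \widehat{V^i}(\xi),
\]
and, using the Helmholtz-type decomposition, each $\xi_k \widehat{V^i}$ can be written as a linear combination (with bounded, $0$-homogeneous coefficients $\xi_k\xi_\ell/|\xi|^2$) of $\xi_\ell \widehat{V^\ell}$ (the divergence part) and of $\xi_\ell \widehat{V^j} - \xi_j \widehat{V^\ell}$ (the curl part). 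Concretely, the algebraic identity
\[
\xi_k\,\xi_j\, \widehat{V^i} = \xi_k \xi_j \widehat{V^i} - \xi_k \xi_i \widehat{V^j} + \xi_k \xi_i \widehat{V^j}
\]
rearranged appropriately shows $\partial_k V^i = \sum_\ell R_k R_\ell (\partial_\ell V^i - \partial_i V^\ell) + R_k R_i (\Div V)$, i.e. $\partial_k V^i$ is expressed through second-order Riesz transforms applied to the components of $\curl V$ and $\Div V$. First I would record this identity carefully (it is just linear algebra on the symbol plus $\sum_\ell \xi_\ell^2/|\xi|^2 = 1$), noting that all the operators involved are compositions $R_aR_b$ of two Riesz transforms.

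Next I would apply the standard $L^m$ bound for the Riesz transforms, $\|R_j g\|_m \le (\hat m - 1)\|g\|_m$ for $1<m<\infty$ — this is the classical Calderón–Zygmund/Iwaniec-type constant, with $\hat m = \max\{m, m/(m-1)\}$ appearing because the Riesz transform norm is controlled by the norm of the Hilbert transform, whose $L^m$ norm is $\cot(\pi/(2\hat m)) \le \hat m - 1$. Composing two Riesz transforms gives a factor $(\hat m-1)^2$ per term; summing over the $i,k$ indices and over the $\le N$ terms in each representation produces a combinatorial factor that one bounds crudely by $N^2$, so that $\|DV\|_m \le N^2(\hat m - 1)(\|\Div V\|_m + \|\curl V\|_m)$ — here it is harmless to have merged the two factors of $(\hat m - 1)$ into the single power appearing in \eqref{lp} after absorbing one of them into the constant, or more likely the intended reading is that one of the two Riesz compositions acts on a scalar and the other is handled by the trivial bound $|\xi_k\xi_\ell/|\xi|^2|\le 1$ on the Fourier side combined with Plancherel only in a reduction step; in any case the bookkeeping of constants is routine and I would just track that no more than two Riesz transforms and at most $N^2$ summands enter. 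Since $V \in C^1_c$, everything is justified classically: $\Div V, \curl V \in C_c$, the Riesz transforms are well-defined and bounded on every $L^m$, and the representation holds pointwise after inverting the Fourier transform.

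The main obstacle — really the only non-cosmetic point — is getting the algebraic symbol identity exactly right, i.e. checking that the combination of $\partial_\ell V^j - \partial_j V^\ell$ and $\Div V$ one writes down does reproduce $\partial_k V^i$ and not something off by a lower-order or spurious term; this is where a sign or an index can slip. I would verify it by contracting the putative identity with $\xi$ and with the antisymmetric part and confirming consistency, using only $\sum_\ell \xi_\ell^2 = |\xi|^2$. Everything downstream — boundedness of $R_j$ on $L^m$ with the stated constant, and the triangle inequality to split the two source terms — is standard harmonic analysis (see \cite{Duo}), so the proof is short once the representation is pinned down.
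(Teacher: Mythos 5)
Your representation formula is the same as the paper's: write each $D_h V_k$ as a sum of second-order Riesz transforms $R_h R_j$ applied to $\Div V$ and the entries of $\curl V$, using only $\sum_\ell \xi_\ell^2/|\xi|^2 = 1$. Up to signs and index bookkeeping, this is correct and is exactly the algebraic identity the paper records.

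The genuine gap is in the constant. You estimate each $R_a R_b$ by composing two first-order Riesz transform bounds, obtaining $(\hat m - 1)^2$ per term, and then propose to ``absorb one of the two factors of $(\hat m - 1)$ into the constant.'' That cannot work: the stated constant is $N^2(\hat m - 1)$, which is \emph{linear} in $\hat m - 1$, and $\hat m - 1 \to \infty$ as $m \to 1$ or $m \to \infty$, so there is nothing $m$-independent in which to absorb it. Your alternative suggestion (handling one factor by the trivial symbol bound $|\xi_k\xi_\ell|/|\xi|^2 \le 1$ plus Plancherel) only gives an $L^2$ estimate, not an $L^m$ one. The missing ingredient is the sharp $L^m$ bound on the \emph{second-order} Riesz transforms viewed as single operators: by the Beurling--Ahlfors-type estimate of Ba\~nuelos--M\'endez-Hern\'andez (the paper's reference [BMH], Theorem 2.4), $\|R_h R_k g\|_m \le \tfrac{\hat m - 1}{2}\|g\|_m$ for $h \ne k$ and $\|R_h^2 g\|_m \le (\hat m - 1)\|g\|_m$, i.e.\ linear in $\hat m - 1$, not quadratic. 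This is a nontrivial result (proved by martingale/stochastic-integral techniques, not by composing one-dimensional Hilbert transform bounds), and it is precisely what lets the paper close the estimate with a single power of $\hat m - 1$. Once that bound is in place, the rest of your argument --- summing over indices, H\"older to pass from $\sum_{kj}\|\curl_{kj}V\|_m$ to $N\|\curl V\|_m$, and collecting the $N^2$ --- goes through as you sketch.
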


\begin{proof}
Let $R_{j}$ be the Riesz transform, defined as the Fourier multiplier with symbol $-i\, \xi_{j}/|\xi|$ (cf. \cite[p. 76]{Duo}). Then it holds
\beq
\label{dvhk}
D_{h}V_{k}=-R_{h}\, R_{k}\, \Div V-\sum_{j=1}^{N}R_{h}\, R_{j}\, \curl_{kj} V,
\eeq
which follows, taking the Fourier transform (denoted by $g\mapsto \hat{g}$), from the identity
\[
i\, \xi_{h}\, \hat V_{k}=\frac{\xi_{h}\, \xi_{k}}{|\xi|^{2}}\sum_{j=1}^{N} i\, \xi_{j}\, \hat V_{j}+\sum_{j=1}^{N}\frac{\xi_{h}\, \xi_{j}}{|\xi|^{2}}\Big(i\, \xi_{j}\, \hat V_{k}-i\, \xi_{k}\, \hat V_{j}\Big).
\]
The second order Riesz transform has $L^{m}-L^{m}$ norm $\hat{m}-1$ on diagonal terms and $(\hat{m}-1)/2$ on off-diagonal terms, i.\,e.,
\[
\begin{split}
\|R_{h}\, R_{k} \,g\|_{m}&\le \frac{\hat{m}-1}{2}\, \|g\|_{m}, \qquad h\ne k,\\
\|R_{h}^{2}\, g\|_{m}&\le (\hat{m}-1)\,\|g\|_{m}
\end{split}
\]
(see \cite[Theorem 2.4]{BMH}). Thus from \eqref{dvhk} we get 
\[
\|D_{h}V_{k}\|_{m}\le (\hat{m}-1)\left(\|\Div V\|_{m}+\sum_{j=1}^{N}\|\curl_{kj} V\|_{m}\right).
\]
We sum over $k=1, \dots, N$ and use the H\"older inequality to get
\[
\sum_{k, j=1}^{N}\|\curl_{kj} V\|_{m}\le N^{2\, (1-\frac{1}{m})}\left(\int \sum_{k, j=1}^N |\curl_{kj} V|^m\right)^{1/m}. 
\]
Since
\[
\sum_{k, j=1}^N |\curl_{kj} V|^m\le N^{2-m}\, \left( \sum_{k, j=1}^N |\curl_{kj} V|^2\right)^{m/2}
\]
we obtain
\[
\sum_{k, j=1}^{N}\|\curl_{kj} V\|_{m}\le N\, \|\curl V\|_m.
\]
Summing also over $h=1, \dots, N$ we finally get
\[
\|DV\|_{m}\le \sum_{h, k=1}^{N}\|D_{h}V_{k}\|_{m}\le (\hat{m}-1)\, N^{2} \Big(\|\Div V\|_{m}+\|\curl V\|_{m}\Big).
\]
\end{proof}

\begin{remark}
\label{nonoptimal}
Estimate \eqref{lp} is very rough in its dependence on $N$. It is a common feature of $L^{m}$-bounds on Riesz transform that they do not depend on the dimension of the euclidean space. Up to our knowledge, optimal $L^m$ estimates for the  operator $D \curl^{-1}$ (let alone for the resolvent operator of the  $\Div-\curl$ system) are not known. It is also not optimal as $m\to 2$, compare it with \eqref{asl} in the case $m=2$.
\end{remark}

\begin{theorem}\label{cord}
Let $F$ obey the assumptions of Theorem \ref{minloc}, in particular \eqref{Kell} with a constant $K\ge 1$ and $p, q$ given in Proposition \ref{proqu}. Let furthermore  $m>1$ and $f\in L^{m}(\Omega)\cap W^{-1, p'}(\Omega)$. Then any  local minimizer $u\in W^{1, p}_{\rm loc}(\Omega)$ for $J$ given in \eqref{defj2} is such that $DF(Du)\in W^{1, m}_{\rm loc}(\Omega)$ and fulfills the estimates
\beq
\label{v0bis}
 \|DF(Du)\|_{L^{m}(B)}\le C\, \Big(1+\|f\|_{L^m(2B)}+\|F(Du)\|_{L^1(2B)}^{(q-1)/p}\Big)
 \eeq
\beq
\label{v0}
\|DF(Du)\|_{W^{1, m}(B)}\le C\, \Big(\|f\|_{L^{m}(2B)}+ \|DF(Du)\|_{L^{m}(2B)}\Big)
\eeq
in each of the following cases:
\begin{enumerate}
\item
$K\le K_{0}$, with $K_{0}>1$ depending on $N$ and $m$, with $C=C(N, m, B)$.
\item
$|m-2|\le \delta_{0}$, for $\delta_{0}>0$ depending on $N$ and $K$, with $C=C(N, K, B)$.
\end{enumerate}
\end{theorem}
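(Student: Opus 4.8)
The plan is to run the very same approximation scheme used in the proof of Theorem \ref{minloc}, but to replace the Hilbertian identity \eqref{asl} (exploited there via Lemma \ref{divrotl2}) by its $L^m$ counterpart \eqref{lp}. Fix a ball $B$ with $4B\subseteq\Omega$ and build, on $4B$, the regularized integrands $F_n=F*\varphi_{\eps_n}+\tfrac{\mu_n}{2}|z|^2$ and data $f_n=f*\varphi_{1/n}$ as in that proof, obtaining smooth solutions $v_n$ with $v_n\to u$, $V_n:=DF_n(Dv_n)\to DF(Du)$ in the senses established there, and with $f_n\to f$ in $L^m(4B)$. Each $F_n$ is $K$-q.\,u.\,c.\,and uniformly elliptic, so $v_n\in C^2(4B)$, $\Div V_n=f_n$, and $DV_n=D^2F_n(Dv_n)\,D^2v_n$ is of the form $PS$ with $P$ symmetric positive definite (eigenvalue ratio $\le K$) and $S$ symmetric. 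Lemma \ref{basiclemma} then gives the pointwise bound
\[
|\curl V_n|_2\le c_K\,|DV_n|_2,\qquad c_K:=\Big(\tfrac{2(K-1)^2}{K^2+1}\Big)^{1/2},
\]
since $t\mapsto(1-t)^2/(1+t^2)$ is decreasing on $[0,1]$.

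Now localize: for $R\le r<s\le 2R$ choose $\varphi\in C^\infty_c(B_s,[0,1])$ with $\varphi\equiv1$ on $B_r$ and $|D\varphi|\le C/(s-r)$, so that $\varphi V_n\in C^1_c(\R^N;\R^N)$ and \eqref{lp} applies. From $\Div(\varphi V_n)=\varphi f_n+(D\varphi,V_n)$, $\curl(\varphi V_n)=\varphi\,\curl V_n+D\varphi\land V_n$, and $\varphi|DV_n|_2\le|D(\varphi V_n)|_2+|D\varphi|\,|V_n|$, one gets
\[
\|D(\varphi V_n)\|_m\le N^2(\hat m-1)\,c_K\,\|D(\varphi V_n)\|_m+C\Big(\|f_n\|_{L^m(B_s)}+\tfrac{1}{s-r}\|V_n\|_{L^m(B_s\setminus B_r)}\Big).
\]
If $N^2(\hat m-1)\,c_K<1$ the first term on the right is reabsorbed; since $\varphi\equiv1$ on $B_r$ gives $\|DV_n\|_{L^m(B_r)}\le\|D(\varphi V_n)\|_m$, taking $r=R$, $s=2R$ and letting $n\to\infty$ exactly as in the proof of Theorem \ref{minloc} (using weak lower semicontinuity of the $W^{1,m}$-norm and $V_n\to DF(Du)$) yields $DF(Du)\in W^{1,m}(B)$ together with \eqref{v0}. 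Because $c_K\to0$ as $K\to1^+$ while $N^2(\hat m-1)$ is a fixed constant once $m$ is fixed, the reabsorption condition holds for all $K\le K_0(N,m)$; this is case (i).

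For case (ii) the crude bound \eqref{lp} is not enough, and one argues instead by interpolation around $m=2$. Writing, as in \eqref{dvhk}, $D(\varphi V_n)=\mathcal{A}(\Div(\varphi V_n))+\mathcal{B}(\curl(\varphi V_n))$ with $\mathcal{A},\mathcal{B}$ built from second-order Riesz transforms, the identity \eqref{asl} forces $\|\mathcal{B}\|_{L^2\to L^2}=1/\sqrt2$ on the range of $\curl$, so at $m=2$ the curl contribution enters with factor $c_K/\sqrt2=(K-1)/\sqrt{K^2+1}<1$ for every $K\ge1$, and reabsorption holds unconditionally (consistently with Theorem \ref{Mth}). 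Since the $L^m\to L^m$ operator norm of $\mathcal{B}$ is finite for all $m\in(1,\infty)$ and log-convex in $1/m$ (Riesz--Thorin), it is continuous in $m$ and tends to $1/\sqrt2$ as $m\to2$; hence $\|\mathcal{B}\|_{L^m\to L^m}\,c_K<1$ for $|m-2|\le\delta_0(N,K)$, and the same reabsorption and passage to the limit give \eqref{v0}.

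Finally, to make \eqref{v0} meaningful one needs $DF(Du)\in L^m_{\rm loc}(\Omega)$, as well as \eqref{v0bis}: Theorem \ref{minloc} provides $DF(Du)\in W^{1,2}_{\rm loc}(\Omega)\hookrightarrow L^{2^*}_{\rm loc}(\Omega)$ --- which already covers case (ii), where $m$ is close to $2<2^*$ --- and, when $m>2^*$, iterating \eqref{v0} with the Sobolev embedding $W^{1,m}\hookrightarrow L^{m^*}$ raises the exponent in finitely many steps, the resulting $K_0$ depending on $N$ and the target $m$ only; a Gagliardo--Nirenberg interpolation of $\|DF(Du)\|_{L^m(2B)}$ between $\|D(DF(Du))\|_{L^m(2B)}$ and $\|DF(Du)\|_{L^1(2B)}\lesssim 1+\|F(Du)\|_{L^1(2B)}^{(q-1)/p}$ (via \eqref{pqg}, \eqref{pqg2}), followed by Young's inequality and reabsorption, turns \eqref{v0} into \eqref{v0bis}. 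I expect the crux to be case (ii): quantifying the $m$-dependence of the resolvent of the $\Div$--$\curl$ system near $m=2$ (cf.\ Remark \ref{nonoptimal}) and matching its value there with the sharp Hilbertian constant $1/\sqrt2$, all while keeping the localization commutators from eroding the margin left for reabsorption.
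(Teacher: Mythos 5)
Your proof follows essentially the same strategy as the paper's: the approximation scheme from Theorem \ref{minloc}, pointwise application of Lemma \ref{basiclemma}, localization, reabsorption via \eqref{lp} for case (i), and Riesz--Thorin interpolation near $m=2$ for case (ii). Two implementation choices differ slightly. For case (ii), you interpolate the single ``curl-to-gradient'' block $\mathcal{B}$, noting that its sharp $L^2\to L^2$ norm on antisymmetric matrix fields is exactly $1/\sqrt2$ (which one can verify directly by Plancherel, since for an antisymmetric $A$ and unit vector $n$ one has $|An|^2\le|A|_2^2/2$ with equality attained), whereas the paper packages $\Div$ and $\curl$ into a single operator $T(f,G)$ acting on pairs, with the product norm on $X_m$ chosen so that $\|T\|_{\mathcal L(X_2,Y_2)}=1$; the two are equivalent, and your decomposition is marginally cleaner. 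For the a-priori $L^m$-integrability of $DF(Du)$ and \eqref{v0bis}, you propose bootstrapping \eqref{v0} through Sobolev embedding starting from the $W^{1,2}$ regularity of Theorem \ref{minloc}, plus a Gagliardo--Nirenberg interpolation; the paper instead obtains a \emph{uniform} $L^{\bar\theta}$ bound on the approximating stress fields $V_n$, with $\bar\theta=\min\{p/(q-1),m\}$, via the growth estimate \eqref{growth} (as in \eqref{vvvv}--\eqref{fert}) and then applies Lemma \ref{CM} together with the iteration lemma of \cite[Lemma 3.1, Ch 5]{G}. Your route is plausible, but be aware of the circularity it introduces: \eqref{v0} already presupposes $DF(Du)\in L^m_{\rm loc}$, so the bootstrap must be run at the level of the smooth approximants $V_n$ (not on $DF(Du)$ itself) and the compactness needed to pass to the limit must be secured at each stage, all subject to the constraint $K\le K_0(N,m')$ at every intermediate exponent $m'$ (which fortunately is weakest at small $m'$). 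The paper's direct $L^{\bar\theta}$ bound sidesteps this entirely and is the way I would suggest filling in that last step.
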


\begin{proof}
Given $B$ such that $4B\Subset \Omega$, we follow the first four steps of the proof of Theorem \ref{minloc}, to find $u_{n}\in C^{\infty}(B)$, $f_{n}\in C^{\infty}(B)$, $F_{n}\in C^{\infty}(\R^{N})$ and $v_{n}\in C^{2}(B)$ such that 
\begin{enumroman}
\item
$u_{n}\to u$ in $W^{1, p}(B)$, $f_{n}\to f$ in $L^{m}(B)\cap W^{-1, p'}(B)$;
\item 
$F_{n}$ is $K$-q.\,u.\,c.;
\item
$v_{n}\rightharpoonup u$ in $W^{1, p}(B)$, $v_{n}\to u$ in $L^{p}(B)$, and $v_{n}$ solves
\[
\Div(DF_{n}(Dv_{n}))=f_{n};
\]
\item
$\|F_{n}(Dv_{n})\|_{L^{1}(B)}\to \|F(Du)\|_{L^{1}(B)}$.
\end{enumroman} 
We then proceed as in Theorem \ref{Mth}, in order to find a uniform bound for $DF_{n}(Dv_{n})$ in $W^{1, m}(B')$, $B'=\frac{1}{2}B$. To simplify the notation, we omit for the moment the dependence of $v$, $F$, and $f$ on $n$.

Let $V=DF(Dv)$ and observe that the assumptions of  Lemma \ref{basiclemma} hold true for the matrix $X=DV=D^{2}F(Dv)\, D^{2}v$; hence, pointwise,
\beq
\label{vv}
|\curl V|_{2}\le \sqrt{2}\, \e(K)\, |DV|_{2}, \qquad \e(K)=1-\frac{1}{K}.
\eeq
Suppose $B=B_{2R}$ and, for any $R\le r< s\le 2\, R$,  fix $\varphi\in C^{\infty}_{c}(B_{s}; [0, 1])$ as in \eqref{propvarphi}. 

We split the proof in two cases, according to the situations under consideration in the two statements. For the first assertion, we apply \eqref{lp} to the field $W:=\varphi\, V$ to get
\[
\begin{split}
\|\varphi\, DV\|_{m}&\le \|DW\|_{m}+\|D\varphi\otimes V\|_{m}\\
&\le N^{2}\, (\hat{m}-1)\big(\|\Div W\|_{m}+ \|\curl W\|_{m}\big) + \frac{C}{s-r}\|V\|_{L^{m}(B_{s}\setminus B_{r})}
\end{split}
\]
for any $m>1$. From
\[
\Div W=\varphi\, f +(V, D\varphi),\qquad \curl W=\varphi\, \curl V+V\land D\varphi
\]
we thus infer
\beq
\label{v}
\|\varphi\, DV\|_{m}\le N^{2}\, (\hat{m}-1)\big( \|\varphi\, \curl V\|_{m}+\|\varphi\, f\|_{m}\big)+\frac{C}{s-r}\|V\|_{L^{m}(B_{s}\setminus B_{r})}.
\eeq
We then let $K_{0}=K_{0}(N, m)>1$ satisfying
\beq
\label{condkzero}
\sqrt{2}\, N^{2}\, (\hat{m}-1)\, \e(K_{0})< 1
\eeq
so that, if $K\le K_{0}$,  the curl term in \eqref{v} can be reabsorbed on the left.  Thanks to the properties \eqref{propvarphi} of $\varphi$ we deduce
\beq
\label{vvv}
\|DV\|_{L^{m}(B_{r})}\le C\, \|f\|_{L^{m}(B_{R})}+\frac{C}{s-r}\|V\|_{L^{m}(B_{s}\setminus B_{r})}, \qquad R\le r<s\le 2R,
\eeq
for a constant $C=C(N, m)$.

Regarding the second assertion, we consider the linear operator $T(f, G)=DV$,  where $V$ solves
\[
\begin{cases}
\Div V=f,\\
\curl V=\sqrt{2}\, G,
\end{cases} 
\qquad f\in C^{\infty}_{c}(\R^{N}), \quad G\in C^{\infty}_{c}(\R^{N}; \R^{N}\land \R^{N}),
\]
which, as already noted, is represented in terms of Riesz transforms as
\[
T(f, G)_{kh}=-R_{h}\, R_{k}\, f-\sqrt{2} \, \sum_{j=1}^{N}R_{h}\, R_{j} \, G_{kj}.
\]
Estimate \eqref{lp} implies that $T$ has an extension $T:X_{m}\to Y_{m}$, where 
\[
X_{m}:=L^{m}(\R^{N})\times L^{m}(\R^{N}; \R^{N}\land \R^{N}),\qquad Y_{m}:=L^{m}(\R^{N}; \R^N\otimes\R^N),
\]
with the norms
 \[
\|(f, G)\|_{m}=\left(\int |f|^m+|G|_2^m\, dx\right)^{1/m},\qquad \|M\|_{m}=\left(\int |M|_{2}^{m}\, dx\right)^{1/m}
\] 
on $X_{m}$ and $Y_{m}$, respectively. For the complex interpolation spaces   it holds
\[
[X_{m_{1}}, X_{m_{2}}]_{\theta}=X_{m},\qquad \frac{1}{m}=\frac{1-\theta}{m_{1}}+\frac{\theta}{m_{2}}, \qquad \theta\in [0, 1],
\]
with equality of norms, and the same holds for the $Y_{m}$'s . On the other hand,  Lemma \ref{divrotl2} ensures that, with respect to these norms, 
\[
\|T\|_{{\cal L}(X_{2}, Y_{2})}= 1.
\]
Fix $\bar m'<2<\bar m$. The Riesz-Thorin interpolation theorem \cite[Theorem 1.19]{Duo} yields
\[
\|T\|_{{\cal L}(X_{m}, Y_{m})}\le \|T\|_{{\cal L}(X_{\bar m}, Y_{\bar m})}^{\theta},\qquad \frac{1}{m}=\frac{1-\theta}{2}+\frac{\theta}{\bar m}\, ,
\]
for any $2\le m\le \bar m$, and a similar estimate holds also for $\bar m'\le m\le 2$. 
We infer that there exist $\eta:[\bar m', \bar m]\to [0, +\infty)$ such that 
\beq
\label{eta123}
\lim_{m\to 2}\eta(m)=0
\eeq
and for all $m\in [\bar m', \bar m]$ it holds
\[
\|T\|_{{\cal L}(X_{m}, Y_{m})}\le 1+\eta(m).
\]
To complete the choice of $\delta_{0}$ in the second case, we proceed as in the proof of  \eqref{v}, setting $W=\varphi\, V$ and using the previous operator norm estimate, to get, for $\eta=\eta(m)$,
\[
\begin{split}
\|\varphi\, DV\|_{m}&\le \|DW\|_{m}+\|V\otimes D\varphi\|_{m}\\
&\le \|T(\Div W, 2^{-1/2} \curl W)\|_m+\|V\otimes D\varphi\|_{m}\\
&\le (1+\eta)\, \|(\Div W, 2^{-1/2}\curl W)\|_{m}+\|V\otimes D\varphi\|_{m}\\
&\le (1+\eta) \, \Big(\|(\varphi\, f, \varphi\, 2^{-1/2}\curl V)\|_{m}+\|\big((V, D\varphi), V\land D\varphi\big)\|_{m}\Big)+\|V\otimes D\varphi\|_{m}\\
&\le \frac{1+\eta}{\sqrt{2}}\, \|\varphi\, \curl V\|_{m}+ C\, \|\varphi\, f\|_{m}+\frac{C}{s-r}\, \|V\|_{L^{m}(B_{s}\setminus B_{r})}\\
&\hspace{-5pt}\underset{\eqref{vv}}{\le}(1+\eta)\, \e(K)\,  \|\varphi\, DV\|_{m}+C\, \|f\|_{L^{m}(B_{R})}+\frac{C}{s-r}\, \|V\|_{L^{m}(B_{s}\setminus B_{r})}.
\end{split}
\]
Since $\e(K)<1$, thanks to \eqref{eta123} we can choose $\delta_{0}=\delta_{0}(K, N)$ in such a way that 
\[
(1+\eta(m))\, \e(K)<1,\qquad \forall m\in [2-\delta_{0}, 2+\delta_{0}], 
\]
which again gives estimate \eqref{vvv} with a constant $C=C(N, K)$. 

Proceeding   as in the proof of Theorem \ref{Mth}, we see that estimate \eqref{vvv} improves to 
\beq
\label{finalest}
\|V\|_{W^{1, m}(B_{R})}\le C\, \|f\|_{L^{m}(B_{2R})}+C_{R, \theta}\|V\|_{L^{\theta}(B_{2R})}, \qquad \theta\in (0, m], 
\eeq
which therefore holds uniformly for all $V_{n}=DF_{n}(Dv_{n})$ constructed at the beginning. If $p, q$ are given in Proposition \ref{proqu}, point (ii), we set $\bar\theta=\min\{p/(q-1), m\}$  and proceed as in  \eqref{vvvv} to get a uniform bound on $\|V_n\|_{L^{\bar\theta}(B_{2R})}$. Thanks to \eqref{finalest}, the latter in turn implies the compactness of the $V_{n}$ in $L^{m}(B_{R})$. The rest of the proof of Theorem \ref{minloc} follows {\em verbatim}, providing estimates \eqref{v0bis} and \eqref{v0} in both the stated cases. We omit the details.

\begin{remark}
Closely inspecting the previous proof yields the following asymptotic estimates. The constant $ K_0$ goes to $1$ as $ m \to \infty $ or $m\to 1$. 
Similarly, \eqref{v0bis}, \eqref{v0} hold true for $m\in (a(K), b(K))$ with $a(K)\to 1$ and $b(K)\to +\infty$ as $K\to 1$.

We made no attempt to obtain optimal estimates for $K_0$ and $\delta_0$  as $ m \to 2 $, mainly due to the roughness of  estimate \eqref{lp} outlined in Remark \ref{nonoptimal}. Thus, we do not recover Theorem \ref{minloc} by simply letting $m\to 2$ in the previous statement.
\end{remark}

\end{proof}

\subsection{On the $C^{p'}$ conjecture}\label{sola}

An immediate corollary of the Cordes condition proved in the previous section is the following one.

\begin{corollary}\label{corcor}
Any weak solution $u\in W^{1, p}_{{\rm loc}}(\Omega)$ of $\Delta_{p}u=f\in L^{\infty}_{{\rm loc}}(\Omega)$ belongs to $C^{2 -\alpha}(\Omega)$, where $\alpha= \alpha(N, p)\le C(N)\, |p-2|$, provided $ |p-2| < 1/(2N^3) $.
\end{corollary}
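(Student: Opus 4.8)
The plan is to read off the corollary from case~(1) of Theorem~\ref{cord} applied to the $p$-Poisson equation, and then to invert the gradient mapping. For $F(z)=|z|^p/p$ the stress field is $V:=|Du|^{p-2}\,Du=DF(Du)$; by Example~\ref{exuhl} this $F$ is $K$-q.\,u.\,c.\,with $K=\max\{p-1,1/(p-1)\}$, so $\e(K)=1-1/K$ equals $(p-2)/(p-1)$ if $p>2$ and $2-p$ if $p<2$, and in either case $\e(K)\le|p-2|$. (The degenerate case $p=2$ is classical, being just $\Delta u=f\in L^\infty_{\rm loc}$, so I assume $p\ne2$.) Since $J$ is strictly convex, any weak solution $u\in W^{1,p}_{\rm loc}(\Omega)$ of $\Delta_p u=f$ is, on every $B\Subset\Omega$, the unique minimizer of $J(\cdot,B)$ relative to its own boundary values, hence a local minimizer in the sense of~\eqref{defj2}; moreover $f\in L^\infty_{\rm loc}(\Omega)\subseteq L^m_{\rm loc}(\Omega)$ for every $m>1$, and since $|p-2|$ is small we have $p>2N/(N+2)$, so $f\in W^{-1,p'}_{\rm loc}(\Omega)$ by Sobolev embedding. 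Thus, working on an exhausting sequence of relatively compact subdomains, the hypotheses of Theorem~\ref{cord} are satisfied, for any $m>1$ we wish to choose.

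The heart of the matter is the choice of $m$. I would set $m=\dfrac{1}{2\,N^2\,|p-2|}$. The hypothesis $|p-2|<1/(2N^3)$ then forces $m>N$ (and $m\ge2$, so that $\hat m=m$), while at the same time
\[
\sqrt2\,N^2\,(\hat m-1)\,\e(K)\le\sqrt2\,N^2\,m\,|p-2|=\frac{1}{\sqrt2}<1,
\]
which is exactly the smallness~\eqref{condkzero} defining $K_0(N,m)$; hence $K\le K_0(N,m)$. Case~(1) of Theorem~\ref{cord} then gives $V=DF(Du)\in W^{1,m}_{\rm loc}(\Omega)$, and since $m>N$, Morrey's embedding yields $V\in C^{0,1-N/m}_{\rm loc}(\Omega)$ with $1-N/m=1-2\,N^3\,|p-2|$.

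It then remains to transfer this regularity to $Du$ via $Du=DF^{-1}(V)=|V|^{p'-2}\,V$. For $p>2$ one has $p'\in(1,2)$ and $w\mapsto|w|^{p'-2}w$ is globally $(p'-1)$-Hölder with $p'-1=1/(p-1)$, so $Du\in C^{0,\beta}_{\rm loc}(\Omega)$ with $\beta=(1-N/m)/(p-1)$; for $p<2$ one has $p'>2$ and the same map is locally Lipschitz, while $V$ is locally bounded, so $Du\in C^{0,1-N/m}_{\rm loc}(\Omega)$. In both cases $u\in C^{2-\alpha}(\Omega)$ with $\alpha=1-\beta$, and a direct computation gives
\[
\alpha=\frac{p-2+N/m}{p-1}=\frac{(1+2\,N^3)\,(p-2)}{p-1}\le(1+2\,N^3)\,|p-2|\qquad(p>2),
\]
using $p-1>1$, and $\alpha=N/m=2\,N^3\,|p-2|$ for $p<2$; so $\alpha\le C(N)\,|p-2|$ with $C(N)=1+2N^3$, and $\alpha<1$ because $|p-2|<1/(2N^3)$.

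I expect the only genuine obstacle to be the competition between the two roles of $m$: a large $m$ is needed for the Morrey exponent $1-N/m$, and hence the Hölder exponent of $Du$, to be close to $1$, but the Cordes-type reabsorption in Theorem~\ref{cord} requires $(\hat m-1)\,\e(K)$ to stay below an absolute constant, i.\,e.\,$m\lesssim 1/|p-2|$; reconciling the two is precisely what produces the threshold $|p-2|<1/(2N^3)$ and fixes the optimal order of $m$. A secondary, structural loss is the summand $(p-2)/(p-1)$ in $\alpha$ when $p>2$, which merely reflects the $(1/(p-1))$-Hölder (not Lipschitz) character of $DF^{-1}$ and is consistent with the $C^{p'}$ conjecture.
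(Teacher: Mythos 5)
Your argument is correct and follows the paper's proof essentially line by line: the choice $m=\tfrac{1}{2N^2|p-2|}$ to reconcile the Cordes smallness condition with $m>N$, the Morrey embedding for $V$, and the $(p'-1)$-Hölder (respectively locally Lipschitz) regularity of $DF^{-1}$ are exactly the steps the authors take, yielding the same exponent $\alpha=(1+2N^3)(p-2)/(p-1)$ for $p>2$ and $\alpha=2N^3(2-p)$ for $p<2$. The only minor difference is cosmetic: in the case $p<2$ the paper deduces local Lipschitzness of $\Psi$ from $Du\in L^\infty_{\rm loc}$ (a standard a priori bound from $f\in L^\infty_{\rm loc}$), whereas you use the local boundedness of $V$ coming from the Morrey embedding just established; the two are equivalent, and your version is arguably cleaner since it stays entirely within the estimate already in hand.
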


\begin{proof}
Recall that $|z|^p$ is $K$-q.\,u.\,c.\,with constant
\[
K_p=\max\{p-1, 1/(p-1)\}=\begin{cases}
p-1& \text{if $p\ge 2$}\\
1/(p-1)&\text{if $p\in (1, 2)$}.
\end{cases}
\]
We apply the Cordes  estimates of the previous theorem, so that $|Du|^{p-2}\, Du\in W^{1, m}_{{\rm loc}}(\Omega)$ for any  $m\ge 2$ such that  (see  \eqref{condkzero}) 
\beq
\label{n3}
\sqrt{2} \, N^{2}\, (m-1)\, \big(1-1/K_p\big)<1.
\eeq
Given $p$, we let
\[
m_p=\frac{1}{2N^2\, |p-2|}.
\]
It is readily checked that,  for all $p$ such that $|p-2|<1/(2N^3)$, inequality \eqref{n3} holds for $m_p$ and moreover $m_p>N$. By the Morrey embedding  we thus have that $|Du|^{p-2}\, Du\in C^{1-N/m_p}(B)$.

The map $\Psi:\R^{N}\to \R^{N}$ defined as 
\beq
\label{defPsi}
\Psi(y)=
\begin{cases} |y|^{\frac{2-p}{p-1}}\, y &\text{if $y\ne 0$}\\
0 &\text{if $y=0$}
\end{cases}
\eeq
is the inverse of the similarly defined map $z\mapsto z^{p-1}=|z|^{p-2}z$, for which the well-known inequalities
\[
(z_{1}^{p-1}-z_{2}^{p-1}, z_{1}-z_{2})\ge 
\begin{cases} 
2^{2-p}\, |z_{1}-z_{2}|^{p}&\text{if $p\ge 2$}\\[3pt]
(p-1)|z_{1}-z_{2}|^{2}(1+|z_{1}|^{2}+|z_{2}|^{2})^{\frac{p-2}{2}}&\text{if $1<p< 2$}
\end{cases}
\]
hold true. By the Schwartz inequality we deduce
\[
|z_{1}^{p-1}-z_{2}^{p-1}|\ge 
\begin{cases}
2^{2-p}\, |z_{1}-z_{2}|^{p-1} &\text{if $p\ge 2$}\\
c_{M}\, |z_{1}-z_{2}|&\text{if $1<p< 2$ and $|z_{1}|+|z_{2}|\le M$},
\end{cases}
\]
which means that $\Psi$ is globally $1/(p-1)$-H\"older-continuous if $p\ge 2$ and locally Lipschitz-continuous if $1<p< 2$.
In the second case, we observe that $f\in L^{\infty}_{{\rm loc}}(\Omega)$ implies that $Du\in L^{\infty}_{{\rm loc}}(\Omega)$, hence $\Psi$ is Lipschitz-continuous on the range of $Du$. In both cases we thus have
\[
Du\in C^{\alpha_{p}}(\Omega), \qquad \alpha_{p}=
\begin{cases}
\displaystyle{\frac{1}{p-1}\Big(1-\frac{N}{m_{p}}\Big)=\frac{1-2 \, N^3\, (p-2)}{p-1}}&\text{if $p\ge 2$}\\[5pt]
1- 2 \, N^{3}\, (2-p)&\text{if $1<p<2$},
\end{cases}
\]giving the claim. 
\end{proof}

\begin{remark}
A similar conclusion can be drawn for $W^{1,p}_{\rm loc}(\Omega)$ local minimizers of $J(\cdot, \Omega)$ when $F$ is $K$-q.\,u.\,c.\,and $f\in L^\infty_{\rm loc}(\Omega)$. Indeed, $DF$ is $K^{N-1}$ quasiconformal, hence so is $DF^{-1}$. In particular, $DF^{-1}$ is $1/K$-H\"older continuous and the $\alpha$-H\"older regularity of $DF(Du)$ translates to $\alpha/K$-H\"older regularity for $Du$. The dependence of the H\"older exponent of $Du$ from $K$ turns out to be $1-c_N\,(K-1)$ for $K$ sufficiently near $1$. 
\end{remark}

Consider now a solution $u$ of the inhomogeneous elliptic equation with Uhlenbeck structure
\beq
\label{da}
\Div (a(|Du|)\, Du)=f \in L^{m}_{\rm loc}(\Omega), \quad m>1,
\eeq
where $a:(0, +\infty)\to (0, +\infty)$ is  $C^{1}(0, +\infty)$ and fulfills the ellipticity condition $-1<i_{a}\le s_{a}<+\infty$. In this framework, the exponent $p$ involved in applying the theorems of the previous sections can be found through Example \ref{exuhl} and Proposition \ref{proqu} and turns out to be 
\beq
\label{iasa}
p=1+\frac{1}{K}=\min\Big\{2+i_a, \frac{2+s_a}{1+s_a}\Big\}.
\eeq

Solutions of \eqref{da} are to be meant in a generalized sense and, as in \cite{CM}, we will use the notion of {\em approximable solutions} (or SOLA, solution obtained as limit of approximation) for \eqref{da}: $u$ is an approximable solution for \eqref{da} in $\Omega$ if  $a(|Du|)\, Du\in L^{1}_{{\rm loc}}(\Omega)$, \eqref{da} holds in the distributional sense in $\Omega$, and there exists a sequence $(f_{n})\subseteq C^{\infty}_{c}(\Omega)$ and corresponding weak solutions $u_{n}$ of \eqref{da} in $\Omega$ with right-hand side $f_{n}$ such that
\[
f_{n}\to f \quad \text{in $L^{m}_{{\rm loc}}(\Omega)$}, \qquad u_{n}\to u\ \text{ and }\   Du_{n}\to Du\quad \text{a.\,e.\,in $\Omega$},
\]
and
\[
\lim_{n}\int_{\Omega'}a(|Du_{n}|)\, |Du_{n}|\, dx=\int_{\Omega'}a(|Du|)\, |Du|\, dx
\]
for all $\Omega'\Subset\Omega$. Notice that $u$ may fail to belong to $W^{1, 1}_{{\rm loc}}(\Omega)$, but rather falls into the larger space 
\[
{\cal T}^{1, 1}_{{\rm loc}}(\Omega)=\Big\{v: T_{k}v \in W^{1, 1}_{{\rm loc}}(\Omega) \text{ for all $k>0$}\Big\},\qquad T_{k}v=\max\{-k, \min\{v, k\}\},
\]
for which a pointwise notion of $Du$ is well-defined almost everywhere. Whenever $f\in W^{-1,p' }(\Omega)$ (with $p$ given in \eqref{iasa}), weak and SOLA solutions coincide, and in particular $u$ belongs to the relevant Orlicz-Sobolev space. For the existence and uniqueness theory of SOLA we refer to \cite{CM3}.

\medskip
We say that $u\in {\cal T}^{1,1}_{{\rm loc}}(\Omega)$ is a cylindrical solution of \eqref{da} if there exists a point $x_{0}\in \R^{N}$ and a $k$-dimensional vector subspace $V\subseteq \R^{N}$ with corresponding orthogonal projection $\pi_{V}:\R^{N}\to V$, such that 
\[
u(x)=v(|\pi_{V}(x-x_{0})|)
\]
for some $v:I\to \R$ with $I\subseteq [0, +\infty)$ open and $\Omega\subseteq \{x\in V: |x|\in I\}\times V^{\perp}$. In other terms, a cylindrical function {\em only} depends on the distance from some vector subspace.

In order to study the regularity properties of a cylindrical solution of \eqref{da}, we first perform some straightforward reductions. It is clear that we can assume 
\[
V=\{x\in \R^{N}: x_{i}\equiv 0\  \forall i=k+1,\dots, N\}, \qquad \Omega=A\times \R^{N-k}
\]
with $A\subseteq \R^{k}$ invariant by the action of the orthogonal group $O_{k}$ on $\R^{k}$. Actually, by the structure of equation \eqref{da}, we can directly suppose that $k=N$, reducing to the case of radial solutions on a radial (meaning, invariant by $O_{N}$) domain $\Omega$. 

\begin{theorem}\label{CZA}
Let $a\in C^{1}((0, +\infty); (0, +\infty))$ fulfill \eqref{indexes} and $f\in L^{m}(\Omega)$ for some $m>1$, where $\Omega$ is a radial domain. If $u$ is a radial approximable solution of 
\[
\Div (a(|Du|)\, Du)=f
\]
in $\Omega$ then, for any $m>1$ and  $B_{R}$ such that $B_{2R}\Subset \Omega$, it holds
\beq
\label{cze}
\|a(|Du|)\, Du\|_{W^{1, m}(B_{R/2})}\le C_{m, R}\, \big(\|f\|_{L^{m}(B_{2R})}+\|a(|Du|)\, Du\|_{L^{1}(B_{2R})}\big).
\eeq
\end{theorem}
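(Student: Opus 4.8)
The plan is to convert the equation into a Poisson problem by exploiting the radial symmetry, and then invoke the classical interior Calder\'on--Zygmund theory for the Laplacian. By the reductions carried out before the statement we may assume $u(x)=v(|x|)$ on a radial domain $\Omega$. It is convenient to argue first on the approximating sequence: by the definition of approximable solution there are $f_n\in C^\infty_c(\Omega)$ with $f_n\to f$ in $L^m_{\rm loc}(\Omega)$ and weak solutions $u_n$ of $\Div(a(|Du_n|)Du_n)=f_n$ with $Du_n\to Du$ a.e.\ and the prescribed energy convergence. Replacing $f_n$ by its spherical mean (still in $C^\infty_c$, and still converging to $f$ since $f$ is radial) and invoking uniqueness of approximable solutions, we may assume $u_n(x)=v_n(|x|)$ is radial. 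It then suffices to prove \eqref{cze} for $u_n$ with a constant independent of $n$ and pass to the limit.

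The first step is the potential construction. Since $u_n$ is a radial weak solution with smooth right-hand side, standard regularity gives $u_n\in C^{1,\alpha}_{\rm loc}(\Omega)$ and $V_n:=a(|Du_n|)\,Du_n\in W^{1,2}_{\rm loc}(\Omega)$; writing $V_n(x)=g_n(|x|)\,x/|x|$ with $g_n(r)=a(|v_n'(r)|)\,v_n'(r)$ and $g_n(0)=0$, the radial form of the equation gives $r^{N-1}g_n(r)=\int_0^r s^{N-1}f_n(s)\,ds$ when $0\in\Omega$, so $g_n$ is locally bounded. The key observation is that a field of the form $g_n(|x|)\,x/|x|$ is curl-free and is in fact the honest, single-valued gradient $V_n=D\Phi_n$ of the radial potential $\Phi_n(x)=G_n(|x|)$, $G_n'=g_n$, on $\Omega$ irrespective of its topology. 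Normalizing $\Phi_n$ to have zero mean on a fixed ball, the weak formulation of $\Div V_n=f_n$ yields $\Delta\Phi_n=f_n$ in $\mathcal D'(\Omega)$.

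Next I would apply the interior $W^{2,m}$ estimates for the Laplacian to $\Phi_n$: on balls $B_{2R}\Subset\Omega$ one gets $\Phi_n\in W^{2,m}_{\rm loc}(\Omega)$ and
\[
\|D^2\Phi_n\|_{L^m(B_{R/2})}\le C_{m,R}\big(\|f_n\|_{L^m(B_{2R})}+\|\Phi_n\|_{L^1(B_{2R})}\big),
\]
where the lower-order term is brought down to the $L^1$-norm by a Gagliardo--Nirenberg interpolation combined with the usual iteration on concentric balls, exactly as in the proof of Theorem \ref{Mth}. Since $D^2\Phi_n=DV_n$ and, with $\Phi_n$ of zero mean, Poincar\'e gives $\|\Phi_n\|_{L^1(B_{2R})}\le C_R\|D\Phi_n\|_{L^1(B_{2R})}=C_R\|V_n\|_{L^1(B_{2R})}$, we obtain
\[
\|V_n\|_{W^{1,m}(B_{R/2})}\le C_{m,R}\big(\|f_n\|_{L^m(B_{2R})}+\|V_n\|_{L^1(B_{2R})}\big)
\]
uniformly in $n$. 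Finally, $f_n\to f$ in $L^m_{\rm loc}$, $Du_n\to Du$ a.e.\ and continuity of $z\mapsto a(|z|)z$ give $V_n\to V:=a(|Du|)Du$ a.e., while $|V_n|=a(|Du_n|)|Du_n|$ together with the energy convergence gives $\|V_n\|_{L^1(B_{2R})}\to\|V\|_{L^1(B_{2R})}$; the uniform $W^{1,m}(B_{R/2})$ bound then forces $V_n\rightharpoonup V$ in $W^{1,m}(B_{R/2})$, and lower semicontinuity of the norm yields \eqref{cze}.

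Once the Poisson reformulation is in place the argument is essentially routine, so the delicate points are: (a) the reduction to a \emph{radial} approximating sequence (needing uniqueness of approximable solutions and the fact that spherical averaging preserves $C^\infty_c$ and the required convergences), including the case when $\Omega$ contains the origin or is annular; and (b) the bookkeeping that keeps only the $L^1$-norm of the stress field — rather than a larger $L^m$-norm — on the right-hand side, which is what forces the interpolation-plus-iteration step on the Laplacian estimate. I expect (a) to be the one requiring the most care to write down rigorously.
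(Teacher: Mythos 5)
Your overall strategy --- pass through a radial potential and reduce to interior Calder\'on--Zygmund estimates for the Laplacian --- is the same as the paper's, and the interpolation/iteration bookkeeping you describe to bring the lower-order term down to $L^1$ is exactly what is done (``proceed as in the final part of the proof of Theorem \ref{Mth}''). But the way you set up the approximation is genuinely different, and it is there that a real gap appears.

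The paper never touches the approximating sequence $(f_n,u_n)$ from the SOLA definition, beyond using the fact (built into the definition) that $\Div V = f$ holds distributionally in $\Omega$ and that $V\in L^1_{\rm loc}(\Omega)$. Instead, it works directly with the limit field $V=a(|Du|)Du$: it records that $V$ is radial (via the truncations $V_k=a(|DT_ku|)DT_ku$), extends $V$ and $f$ by zero, mollifies with a \emph{radial} kernel to get smooth radial fields $V_\eps$ satisfying $\Div V_\eps=f_\eps$ on slightly shrunk radial subdomains, observes $\curl V_\eps=0$, applies the Poincar\'e lemma on balls, and passes to the limit by lower semicontinuity. This avoids any claim about the structure of the approximating solutions themselves.

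Your proof instead tries to enforce radiality on the SOLA approximating sequence itself, and the step ``Replacing $f_n$ by its spherical mean \dots and invoking uniqueness of approximable solutions, we may assume $u_n$ is radial'' does not go through as stated. Two problems. First, spherically averaging $u_n$ does not produce a solution of the equation with the averaged right-hand side, because the operator $\Div(a(|D\cdot|)D\cdot)$ does not commute with spherical averaging; so the only route to a radial $u_n$ is to \emph{re-solve} with radial data and radial boundary values. Second, uniqueness of the SOLA is a statement about the limit $u$, not about the intermediate weak solutions: once you change both $f_n$ and $u_n$, the required a.e.\ gradient convergence and the energy convergence $\int a(|Du_n|)|Du_n| \to \int a(|Du|)|Du|$ have to be re-established for the new sequence, and that is essentially the full SOLA existence/stability theory again. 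As written, this is a genuine missing step, and you have correctly flagged it as the delicate point. The cleanest fix is to drop the approximating sequence entirely and mollify $V$ directly, as the paper does: radiality of $V$ follows from radiality of $u$, and a radial mollifier preserves the structure $V_\eps(x)=h_\eps(|x|)\,x$, after which your potential construction and the Calder\'on--Zygmund argument apply unchanged.

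One smaller remark: for the potential you do not actually need the global statement ``irrespective of topology''; since the final estimate is local on balls $B_{2R}\Subset\Omega$, the paper simply invokes the Poincar\'e lemma on the ball, sidestepping any issues with an annular $\Omega$ or with the origin.
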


\begin{proof}
The field $V=a(|Du|)\, Du\in L^{1}_{{\rm loc}}(\Omega)$ is the pointwise limit of the fields $V_{k}=a(|DT_{k}u|)\, DT_{k} u$ for $k\to +\infty$, which satisfy
\beq
\label{radial}
 T\circ V_k=V_k\circ T \quad \forall T\in O_{N},\qquad |(V_k(x), x)|=|V(x)|\, |x|,
 \eeq
 hence it satisfies the latters too.
  We extend $V$ and $f$ as zero outside $\Omega$ (thus keeping the previous properties for $V$)  and let $V_{\eps}=V*\varphi_{\eps}$, $f_{\eps}=f*\varphi_{\eps}$,  where $\varphi_{\eps}$ is a standard radial convolution kernel supported in $B_{\eps}$. We claim that $V_\eps$ obeys \eqref{radial}. By changing variables and using the radiality of $\varphi_{\eps}$, it is readily checked that $V_{\eps}$ satisfies $T\circ V_\eps=V_\eps\circ T$ for all $T\in O_N$. Thus, in order to check the second condition in \eqref{radial} it suffices to prove it at a point $x=r\, e_1$, where
  \[
V_\eps(r\, e_1)=\int V(y) \, \varphi_\eps(|r\, e_1-y|)\, dy.
\]
The integrand above is odd with respect to the reflections $y_k\mapsto -y_k$,  $k=2, \dots, N$, which implies that $V_\eps(r\, e_1)$ is parallel to $e_1$, and this concludes the proof of \eqref{radial} for $V_\eps$. It follows that for 
\[
h_\eps(x):=(V_\eps(x), x/|x|^2)\in C^\infty(\R^N)
\]
it holds, with a slight abuse of notation, $h_\eps(x)=h_\eps(|x|)$ and
\beq
\label{vepsr}
V_{\eps}(x)=h_{\eps}(|x|)\, x.
\eeq
 Given a radial subdomain $\Omega' \Subset \Omega$ and using Fubini's theorem, we have
\[
\begin{split}
\int_{\Omega'} (V_{\eps}, D\psi)\, dx &=\int_{\Omega'} (V,  \varphi_{\eps}*D\psi)\, dx =\int_{\Omega'} (V, D(\psi*\varphi_{\eps}))\, dx\\
&=-\int_{\Omega'}f\, \psi*\varphi_{\eps}\, dx=-\int_{\Omega'} f_{\eps}\, \psi \, dx,
\end{split}
\]
thus $V_{\eps}$ fulfills $\Div V_{\eps}=f_{\eps}$
weakly (and thus strongly) in $\Omega'$ for all sufficiently small $\eps>0$. From \eqref{vepsr}  we compute, for $x\ne 0$,
\[
DV_{\eps}(x)=h_{\eps}(|x|)\,  I+|x|\, h_{\eps}'(|x|)\, \frac{x}{|x|}\otimes \frac{x}{|x|},
\]
which is a symmetric matrix, so that 
\[
\curl V_{\eps}=0 \qquad \text{in $\Omega'$}.
\]
By the Poincaré lemma, for any $B_{2R}\subseteq \Omega'$ and sufficiently small $\eps>0$, we thus have $V_{\eps}=D v_{\eps}$  for some $v_{\eps}\in C^{2}(B_{2R})$, fulfilling weakly $\Delta v_{\eps}=f_{\eps}$. For $R<r<s<2\, R$ we choose  cut-off functions as in \eqref{propvarphi} and suppose, without loss of generality, that $v_{\eps}$ has zero mean in $B_{s}$. By the standard Calder\'on-Zygmund estimates (see  \cite[Theorem 5.1]{Duo}) and Poincar\'e's inequality, it holds
\[
\begin{split}
\|D^{2}v_{\eps}\|_{L^{m }(B_{r})}^{m}&\le C_{m}\, \Big(\|f_{\eps}\|_{L^{m}(B_{2R})}^{m}+\frac{1}{(s-r)^{m}}\|Dv_{\eps}\|_{L^{m}(B_{s})}^{m}+\frac{1}{(s-r)^{2\, m}}\|v_{\eps}\|_{L^{m}(B_{s})}^{m}\Big)\\
&\le C_{m}\, \|f_{\eps}\|_{L^{m}(B_{2R})}^{m}+C_{m}\Big(\frac{1}{(s-r)^{m}}+\frac{ R^{m}}{(s-r)^{2\, m}}\Big)\, \|Dv_{\eps}\|_{L^{m}(B_{s})}^{m}.
\end{split}
\]
Hence,  we can proceed as in the final part of the proof of Theorem \ref{Mth} to improve the latter to 
\[
\|V_{\eps}\|_{W^{1,m}(B_{R/2})} \le C_{m}\, \|f_{\eps}\|_{L^{m}(B_{2R})}+ C_{m, R}\, \|V_{\eps}\|_{L^{1}(B_{2 R})}.
\]
Since $V_{\eps}\to V$ in $L^{1}(B_{R})$, we obtain the claimed estimate \eqref{cze} by lower semicontinuity. 
\end{proof}

\begin{corollary}\label{radialcp'}
Let $u\in W^{1, p}(B_R)$ be a cylindrical weak solution of 
\[
\Delta_{p}u=f \ \in L^{\infty}(B_{R}).
\]
Then 
\begin{enumerate}
\item If $p\ge 2$, $u\in C^{p'-\eps}_{{\rm loc}}(B_{R})$ for all $\eps>0$, and also for $\eps=0$ if furthermore $f\in C^{0}_{{\rm Dini}}(B_{R})$.
\item
 If $p\le 2$, $u\in C^{2-\eps}_{{\rm loc}}(B_{R})$ for all $\eps>0$, and also for $\eps=0$ if furthermore $f\in C^{0}_{{\rm Dini}}(B_{R})$.
 \end{enumerate}
\end{corollary}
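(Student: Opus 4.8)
The plan is to combine Theorem \ref{CZA} with the algebraic manipulation already used in the proof of Corollary \ref{corcor}, together with a classical endpoint argument for the Dini-continuous case. First I would note that a cylindrical weak solution of $\Delta_p u = f \in L^\infty(B_R)$ is in particular an approximable (SOLA) solution, since for $f \in L^\infty$ the variational framework is available and approximation by smooth data is standard; hence Theorem \ref{CZA} applies with $a(t) = t^{p-2}$ and any $m > 1$, giving $V = |Du|^{p-2} Du \in W^{1,m}_{\rm loc}(B_R)$ for every finite $m$. By the Morrey embedding, $V \in C^{1-N/m}_{\rm loc}(B_R)$ for every $m > N$, hence $V \in C^{0,\beta}_{\rm loc}(B_R)$ for all $\beta < 1$, i.e. $V$ is locally $C^{1-\eps}$ for every $\eps > 0$. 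The inhomogeneity $f \in L^\infty_{\rm loc}$ also yields $Du \in L^\infty_{\rm loc}(B_R)$ by standard De Giorgi–Nash–Moser / gradient bounds for the $p$-Laplacian, which will be needed to control the inverse map on the relevant compact range.

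Next I would invoke the pointwise inequalities for the map $z \mapsto z^{p-1} = |z|^{p-2} z$ and its inverse $\Psi$ recorded in the proof of Corollary \ref{corcor}: for $p \ge 2$, $\Psi$ is globally $1/(p-1)$-Hölder continuous, so $Du = \Psi(V) \in C^{(1-\eps)/(p-1)}_{\rm loc}$, and since $(1-\eps)/(p-1) \to 1/(p-1) = p' - 1$ as $\eps \to 0$ this gives $u \in C^{p'-\eps}_{\rm loc}(B_R)$ for all $\eps > 0$. For $1 < p \le 2$, on the range of $Du$ (which is bounded by the previous paragraph) $\Psi$ is Lipschitz, so $Du$ inherits the full $C^{1-\eps}_{\rm loc}$ regularity of $V$, giving $u \in C^{2-\eps}_{\rm loc}(B_R)$ for all $\eps > 0$. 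This settles the two statements with $\eps > 0$.

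For the endpoint $\eps = 0$ under the extra hypothesis $f \in C^0_{\rm Dini}(B_R)$, the idea is to sharpen the Calderón–Zygmund step in the proof of Theorem \ref{CZA}: there one reduces, via the Poincaré lemma, to $\Delta v_\eps = f_\eps$ with $f_\eps \to f$, and instead of the $W^{2,m}$ estimate one uses the classical Schauder-type result that a solution of the Poisson equation with Dini-continuous right-hand side has continuous (indeed log-Lipschitz, and with a modulus controlled by the Dini modulus of $f$) second derivatives; equivalently $Dv_\eps = V_\eps$ is $C^{0,\omega}$ with a uniform Dini-type modulus $\omega$. Passing to the limit $\eps \to 0$ gives $V \in C^{0,\omega}_{\rm loc}(B_R)$ with $\omega$ a modulus of continuity such that $t \mapsto \omega(t)/t$ is integrable near $0$; composing with $\Psi$ as above (Hölder with exponent $1/(p-1)$ if $p \ge 2$, Lipschitz on the bounded range of $Du$ if $p \le 2$) yields $Du$ continuous, hence $u \in C^{p'}_{\rm loc}$ for $p \ge 2$ and $u \in C^{2}_{\rm loc}$ for $p \le 2$, which are the stated endpoint conclusions (recall $C^\gamma = C^{[\gamma],\gamma-[\gamma]}$, so $C^{p'}$ for $p \ge 2$ means $C^{0,p'-1}$ and $C^2$ means $C^{1,1}$; the latter again follows from $Dv_\eps$ being uniformly log-Lipschitz).

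The main obstacle is the endpoint $\eps = 0$ case: one must verify that the Dini modulus of $f$ genuinely transfers through the convolution regularization $f_\eps = f * \varphi_\eps$ uniformly in $\eps$ (so that the Schauder-type estimate for $\Delta v_\eps = f_\eps$ is uniform), and that the resulting modulus of continuity for $V$ is still a Dini modulus — this is routine but must be done with some care — and, for $p \ge 2$, that composing a Dini modulus with the Hölder map $\Psi$ still produces a modulus of continuity (it does, since $t \mapsto \omega(t)^{1/(p-1)}$ is a modulus of continuity whenever $\omega$ is). The $\eps > 0$ statements, by contrast, are an essentially immediate corollary of Theorem \ref{CZA} and the algebra already present in Corollary \ref{corcor}.
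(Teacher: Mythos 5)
Your treatment of the $\eps>0$ case is exactly the paper's: Theorem~\ref{CZA} with any $m>1$, Morrey embedding, then compose with $\Psi$ as in Corollary~\ref{corcor}. That part is correct.

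The endpoint $\eps=0$ argument has a genuine gap. The conclusion $u\in C^{p'}_{\rm loc}$ for $p\ge 2$ requires $Du\in C^{0,\,1/(p-1)}_{\rm loc}$, which in turn requires $V$ to be locally \emph{Lipschitz} (then $\Psi$, being $1/(p-1)$-H\"older, carries a Lipschitz modulus to a $t^{1/(p-1)}$-modulus). You do mention in passing the correct fact that $\Delta v=f$ with $f$ Dini gives \emph{continuous} second derivatives of $v$, hence $V=Dv\in{\rm Lip}_{\rm loc}$; but you then discard this and downgrade to ``$V\in C^{0,\omega}$ with a Dini modulus $\omega$'', which is strictly weaker and would only give $Du$ continuous, not $1/(p-1)$-H\"older. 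The inference ``yields $Du$ continuous, hence $u\in C^{p'}_{\rm loc}$'' is therefore a non sequitur: $Du$ continuous gives $u\in C^1$, nothing more. Likewise, your parenthetical misstates the convention ($C^{p'}$ for $p\ge 2$ is $C^{1,p'-1}$, not $C^{0,p'-1}$), and ``$C^{1,1}$ follows from $Dv_\eps$ being uniformly log-Lipschitz'' is false, since log-Lipschitz is strictly weaker than Lipschitz. The paper's proof avoids all of this by simply observing that $V$ is (locally) the gradient of a solution of $\Delta v=f$ with $f\in C^0_{\rm Dini}$, so the classical Schauder--Dini estimate gives $D^2v$ continuous, hence $V\in{\rm Lip}_{\rm loc}(B_R)$; one then composes with $\Psi$ exactly as in the $\eps>0$ case, getting $Du\in C^{0,1/(p-1)}_{\rm loc}$ for $p\ge 2$ and $Du\in C^{0,1}_{\rm loc}$ for $p\le 2$. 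You should replace the ``Dini-modulus'' bookkeeping with this Lipschitz conclusion; doing so also disposes of your worry about the modulus transferring uniformly through the mollification, since the Schauder--Dini estimate is applied to the limiting Poisson problem.
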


\begin{proof}
According to Theorem \ref{CZA}, the field $V=|Du|^{p-2}\, Du$ belongs to $W^{1, m}_{{\rm loc}}(B_{R})$ for any $m>1$; hence, by the Morrey embedding, it lies in $C^{1-\eps}(B_{R})$ for any $\eps>0$. Similarly, $V\in {\rm Lip}_{{\rm loc}}\, (B_{R})$ if $f$ is Dini-continuous, being the gradient of a solution of $\Delta v=f\in C^{0}_{{\rm Dini}}(B_{R})$. We conclude through the properties of the map $\Psi$ in \eqref{defPsi}, as in the proof of Corollary \ref{corcor}.
\end{proof}

\medskip
\noindent
{\small {\bf Acknowledgements.} We warmly thank Lorenzo Brasco, Giulio Ciraolo and Alberto Farina for fruitful conversations on some of the themes of this research.  We also thank an anonymous referee for several suggestions which improved the quality of the manuscript.

S.\,Mosconi is supported by grant PRIN n.\ 2017AYM8XW of the Ministero dell'Istruzione, Universit\`a e Ricerca and grant PIACERI 20-22 - linea 2 and linea 3 of the University of Catania.}

\end{document}